\numberwithin{equation}{subsection}
\newcommand{\sqsp}{\renewcommand{\baselinestretch}{1.1}\tiny\normalsize}
\newtheorem{theorem}[subsection]{Theorem}
\newtheorem{lemma}[subsection]{Lemma}
\newtheorem{proposition}[subsection]{Proposition}
\newtheorem{corollary}[subsection]{Corollary}
\theoremstyle{definition}
\newtheorem{definition}[subsection]{Definition}
\newtheorem{example}[subsection]{Example}
\newcommand{\Abar}{\overline{A}}
\newcommand{\Atilde}{\widetilde{A}}
\newcommand{\bk}{\mathbf{k}}
\newcommand{\bC}{\mathbf{C}}
\newcommand{\bN}{\mathbf{N}}
\newcommand{\bZ}{\mathbf{Z}}
\newcommand{\xtwo}{x_1 \otimes x_2}
\newcommand{\ytwo}{y_1 \otimes y_2}
\newcommand{\ztwo}{z_1 \otimes z_2}
\newcommand{\muop}{\mu^{op}}
\newcommand{\dotalpha}{\cdot_\alpha}
\newcommand{\diamondalpha}{\diamond_\alpha}
\newcommand{\andspace}{\quad\text{and}\quad}
\newcommand{\byhomass}{\quad\text{(by Hom-associativity)}}
\newcommand{\bycomm}{\quad\text{(by commutativity)}}
\newcommand{\bymult}{\quad\text{(by multiplicativity)}}
\begin{document}

\title{A twisted generalization of Novikov-Poisson algebras}
\author{Donald Yau}

\begin{abstract}
Hom-Novikov-Poisson algebras, which are twisted generalizations of Novikov-Poisson algebras, are studied.  Hom-Novikov-Poisson algebras are shown to be closed under tensor products and several kinds of twistings.  Necessary and sufficient conditions are given under which Hom-Novikov-Poisson algebras give rise to Hom-Poisson algebras.
\end{abstract}

\keywords{Hom-Novikov-Poisson algebras, Hom-Novikov algebra, Hom-Poisson algebras.}

\subjclass[2000]{17B63, 17D25}

\address{Department of Mathematics\\
    The Ohio State University at Newark\\
    1179 University Drive\\
    Newark, OH 43055, USA}
\email{dyau@math.ohio-state.edu}

\date{\today}
\maketitle

\sqsp

%%%%%%%%%%%%%%%%%%%%%%
\section{Introduction}
%%%%%%%%%%%%%%%%%%%%%%

A Novikov algebra has a binary operation such that the associator is left-symmetric and that the right multiplication operators commute.  Novikov algebras play a major role in the studies of Hamiltonian operators and Poisson brackets of hydrodynamic type \cite{bn,dn1,dn2,dg1,dg2,dg3}.  The left-symmetry of the associator implies that every Novikov algebra is Lie admissible, i.e., the commutator bracket $[x,y] = xy - yx$ gives it a Lie algebra structure.

% my hom-novikov algebras, hou-bai paper

In \cite{yau5} the author initiated the study of a twisted generalization of Novikov algebras, called Hom-Novikov algebras.  A Hom-Novikov algebra $A$ has a binary operation $\mu$ and a linear self-map $\alpha$, and it satisfies some $\alpha$-twisted versions of the defining identities of a Novikov algebra.  In \cite{yau5} several constructions of Hom-Novikov algebras were given and some low dimensional Hom-Novikov algebras were classified.  Using some of the definitions and results in \cite{yau5}, a corresponding generalization of Poisson brackets of hydrodynamic type was studied in \cite{hb}.  Other Hom-type algebraic structures are studied in \cite{ms,ms2,ms3} and the author's papers listed in the references.

% Xu's Novikov-Poisson tensor theory

Novikov algebras, like Lie algebras, are not closed under tensor products in a non-trivial way.  In order to have a satisfactory tensor theory of Novikov algebras, certain extra structures are needed.  In the case of Lie algebras, the relevant structure for a tensor theory is a Poisson algebra structure.  A Poisson algebra has simultaneously a Lie algebra structure and a commutative associative algebra structure, satisfying the Leibniz identity.  Using Poisson algebra as a motivation, Xu in \cite{xu1} defined a Novikov-Poisson algebra as a Novikov algebra that is also equipped with a commutative associative product, satisfying some compatibility conditions.  Novikov-Poisson algebras are closed under tensor products \cite{xu1} and some perturbations of the structure maps \cite{xu2}.  The relationship between Novikov-Poisson algebras and Hamiltonian super-operators was discussed in \cite{xu3}.

% purpose of this paper

The purpose of this paper is to study Hom-Novikov-Poisson algebras, which generalize Novikov-Poisson algebras in the same way that Hom-Novikov algebras generalize Novikov algebras.  In section \ref{sec:hnp} we defined Hom-Novikov-Poisson algebras and discuss some of their basic properties.  It is shown that Hom-Novikov-Poisson algebras are closed under twisting by weak morphisms and that they arise from Novikov-Poisson algebras.  Several examples of Hom-Novikov-Poisson algebras are given.

In section \ref{sec:tensor}, it is shown that Hom-Novikov-Poisson algebras are closed under tensor products in a non-trivial way, generalizing a result in \cite{xu1}.  This tensor product is shown to be compatible with the twisting constructions in section \ref{sec:hnp}.

In section \ref{sec:perturb}, it is shown that every multiplicative Hom-Novikov-Poisson algebra can be perturbed in several ways by its own twisting map and suitable elements.  These results reduce to some of those in \cite{xu2} when the twisting map is the identity map.

If a Hom-Novikov-Poisson algebra gives rise to a Hom-Poisson algebra \cite{ms3,yau10} via the commutator bracket of the Hom-Novikov product, then it is called admissible.  In section \ref{sec:poisson}, a necessary and sufficient condition for admissibility is given, which generalizes an observation in \cite{zbm}.  It is then shown that admissibility is preserved by the twisting constructions in section \ref{sec:hnp}, the tensor products in section \ref{sec:tensor}, and the perturbations in section \ref{sec:perturb}.

%%%%%%%%%%%%%%%%%%%%%%%%%%%%%%%%%%%%%%%
\section{Hom-Novikov-Poisson algebras}
\label{sec:hnp}
%%%%%%%%%%%%%%%%%%%%%%%%%%%%%%%%%%%%%%%

The purposes of this section are to introduce Hom-Novikov-Poisson algebras and to discuss some basic properties and examples of these objects.  Before we give the definition of a Hom-Novikov-Poisson algebra, let us first fix some notations.

%%%%%%%%%%%%%%%%%%%%%%%%
\subsection{Notations}

We work over a fixed field $\bk$ of characteristic $0$.  For a linear self-map $\alpha \colon V \to V$, denote by $\alpha^n$ the $n$-fold composition of $n$ copies of $\alpha$, with $\alpha^0 \equiv Id$.  If $\mu \colon V^{\otimes 2} \to V$ is a linear map, we often abbreviate $\mu(x,y)$ to $xy$ for $x,y \in V$.  Denote by $\muop \colon V^{\otimes 2} \to V$ the opposite map, i.e., $\muop = \mu\tau$, where $\tau \colon V^{\otimes 2} \to V^{\otimes 2}$ interchanges the two variables.

%%%%%%%%%%%%%%%%%%%%%
\begin{definition}
\label{def:hommodule}
\begin{enumerate}
\item
A \textbf{Hom-module} is a pair $(A,\alpha)$ in which $A$ is a $\bk$-module and $\alpha \colon A \to A$ is a linear self-map, called the twisting map.  A \textbf{morphism} $f \colon (A,\alpha_A) \to (B,\alpha_B)$ of Hom-modules is a linear map $f \colon A \to B$ such that $f\alpha_A = \alpha_Bf$.
\item
A \textbf{Hom-algebra} is a triple $(A,\mu,\alpha)$ in which $(A,\alpha)$ is a Hom-module and $\mu \colon A^{\otimes 2} \to A$ is a bilinear map.  Such a Hom-algebra is \textbf{commutative} if $\mu = \muop$.  It is \textbf{multiplicative} if $\alpha\mu = \mu \alpha^{\otimes 2}$.
\item
A \textbf{double Hom-algebra} is a quadruple $(A,\mu_1,\mu_2,\alpha)$ in which $(A,\alpha)$ is a Hom-module and each $\mu_i \colon A^{\otimes 2} \to A$ is a bilinear map.
\item
A double Hom-algebra $(A,\mu_1,\mu_2,\alpha)$ is \textbf{multiplicative} if $\alpha\mu_i = \mu_i\alpha^{\otimes 2}$ for $i=1,2$.
\item
A \textbf{weak morphism} $f \colon A \to B$ of double Hom-algebras is a linear map such that $f\mu_i = \mu_i f^{\otimes 2}$ for $i=1,2$.  A \textbf{morphism} $f \colon A \to B$ of double Hom-algebras is a weak morphism such that $f\alpha_A = \alpha_Bf$.
\end{enumerate}
\end{definition}
%%%%%%%%%%%%%%%%%%%%%

%%%%%%%%%%%%%%%%%%%%%
\begin{definition}
\label{def:homassociator}
Let $(A,\mu,\alpha)$ be a Hom-algebra.  Its \textbf{Hom-associator} \cite{ms} $as_A \colon A^{\otimes 3} \to A$ is defined as
\[
as_A = \mu(\mu \otimes \alpha - \alpha \otimes \mu),
\]
i.e.,
\[
as_A(x,y,z) = (xy)\alpha(z) - \alpha(x)(yz)
\]
for $x,y,z \in A$.
\end{definition}
%%%%%%%%%%%%%%%%%%%%%

%%%%%%%%%%%%%%%%%%%%%
\begin{definition}
\label{def2:homassociator}
Let $(A,\mu_1,\mu_2,\alpha)$ be a double Hom-algebra.  Its \textbf{mixed Hom-associator} $as_A \colon A^{\otimes 3} \to A$ is defined as
\[
as_A = \mu_1(\mu_2 \otimes \alpha) - \mu_2(\alpha \otimes \mu_1).
\]
The Hom-associator with respect to $\mu_i$ is denoted by $as_{\mu_i}$, i.e.,
\[
as_{\mu_i} = \mu_i(\mu_i \otimes \alpha - \alpha \otimes \mu_i)
\]
for $i=1,2$.
\end{definition}
%%%%%%%%%%%%%%%%%%%%%

Let us now recall Hom-associative algebras from \cite{ms}; see also \cite{yau,yau2}.

%%%%%%%%%%%%%%%%%%%%%
\begin{definition}
\label{def:homass}
A \textbf{Hom-associative algebra} is a Hom-algebra $(A,\mu,\alpha)$ such that $as_A = 0$, i.e.,
\begin{equation}
\label{homassociative}
(xy)\alpha(z) = \alpha(x)(yz)
\end{equation}
for all $x,y,z \in A$.
\end{definition}
%%%%%%%%%%%%%%%%%%%%%

The condition $as_A = 0$ is called \textbf{Hom-associativity}. An associative algebra is a multiplicative Hom-associative algebra with $\alpha = Id$.

Next we recall the definition of a Hom-Novikov algebra from \cite{yau5}.

%%%%%%%%%%%%%%%%%%%%%
\begin{definition}
\label{def:homnovikov}
A \textbf{Hom-Novikov algebra} is a Hom-algebra $(A,\mu,\alpha)$ such that
\begin{subequations}
\label{homnovikov}
\begin{align}
as_A(x,y,z) &= as_A(y,x,z),\label{leftsymmetric}\\
(xy)\alpha(z) &= (xz)\alpha(y)\label{rightmult}
\end{align}
\end{subequations}
for all $x,y,z \in A$.
\end{definition}
%%%%%%%%%%%%%%%%%%%%%

A \textbf{Novikov algebra} is a multiplicative Hom-Novikov algebra with $\alpha = Id$. The condition \eqref{leftsymmetric} means that the Hom-associator is left-symmetric, i.e., symmetric in the first two variables.  The condition \eqref{rightmult} means that
\[
R_yR_{\alpha(z)} = R_zR_{\alpha(y)}
\]
for all $y,z \in A$, where $R_y$ denotes right multiplication by $y$.

We can now define Hom-Novikov-Poisson algebras.

%%%%%%%%%%%%%%%%%%%%%
\begin{definition}
\label{def:hnp}
A \textbf{Hom-Novikov-Poisson algebra} is a double Hom-algebra $(A, \cdot, \ast, \alpha)$ such that
\begin{enumerate}
\item
$(A,\cdot,\alpha)$ is a commutative Hom-associative algebra,
\item
$(A,\ast,\alpha)$ is a Hom-Novikov algebra,
\end{enumerate}
and that the following two compatibility conditions
\begin{subequations}
\label{hnp}
\begin{align}
as_A(x,y,z) &= as_A(y,x,z),\label{mixedass}\\
(x \cdot y) \ast \alpha(z) &= (x \ast z) \cdot \alpha(y)\label{rightmult2}
\end{align}
\end{subequations}
hold for all $x,y,z \in A$, where $as_A$ is the mixed Hom-associator in Definition \ref{def2:homassociator}.
\end{definition}
%%%%%%%%%%%%%%%%%%%%%

A \textbf{Novikov-Poisson algebra} \cite{xu1,xu2} is a multiplicative Hom-Novikov-Poisson algebra with $\alpha = Id$.  Notice the similarity between \eqref{hnp} and \eqref{homnovikov}.  Indeed, \eqref{mixedass} means that the mixed Hom-associator $as_A$ is left-symmetric.  Expanding the mixed Hom-associator (Definition \ref{def2:homassociator}) in terms of $\cdot$, $\ast$, and $\alpha$, we can rewrite the condition \eqref{mixedass} as
\begin{equation}
\label{mixed}
(x \ast y) \cdot \alpha(z) - \alpha(x) \ast (y \cdot z) =  (y \ast x) \cdot \alpha(z) - \alpha(y) \ast (x \cdot z).
\end{equation}
Likewise, \eqref{rightmult2} means that
\[
R^\cdot_y R^\ast_{\alpha(z)} = R^\ast_z R^\cdot_{\alpha(y)}
\]
for all $y, z \in A$, where $R^\cdot_y$ (resp., $R^\ast_z$) is right multiplication by $y$ (resp., $z$) using $\cdot$ (resp., $\ast$).

The following observation says that there is another way to state the compatibility condition \eqref{rightmult2} in a Hom-Novikov-Poisson algebra.  It will be used many times below.

%%%%%%%%%%%%%%%%%%%%%
\begin{lemma}
\label{lem:rightmult}
Let $(A, \cdot, \ast, \alpha)$ be a double Hom-algebra in which $\cdot$ is commutative.  Then
\begin{equation}
\label{rightmult'}
(x \cdot y) \ast \alpha(z) = (x \ast z) \cdot \alpha(y)
\end{equation}
for all $x,y,z \in A$ if and only if
\begin{equation}
\label{rightmult''}
(x \cdot y) \ast \alpha(z) = \alpha(x) \cdot (y \ast z)
\end{equation}
for all $x,y,z \in A$.  In particular, if $A$ is a Hom-Novikov-Poisson algebra, then
\[
(x \cdot y) \ast \alpha(z) = (x \ast z) \cdot \alpha(y) = \alpha(x) \cdot (y \ast z)
\]
for all $x,y,z \in A$.
\end{lemma}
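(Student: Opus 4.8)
The plan is to prove the equivalence by exploiting the commutativity of $\cdot$ together with the identity $\alpha(x)\cdot(y\ast z) = (y\ast z)\cdot\alpha(x)$, so that \eqref{rightmult''} becomes a relation of the same shape as \eqref{rightmult'} after renaming variables. Concretely, suppose \eqref{rightmult'} holds. Applying commutativity of $\cdot$ to the left-hand side, $(x\cdot y)\ast\alpha(z) = (y\cdot x)\ast\alpha(z)$, and then invoking \eqref{rightmult'} with $x$ and $y$ interchanged gives $(y\ast z)\cdot\alpha(x)$; one more application of commutativity of $\cdot$ rewrites this as $\alpha(x)\cdot(y\ast z)$, which is \eqref{rightmult''}. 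The converse runs the same chain of equalities in reverse: starting from \eqref{rightmult''}, use commutativity of $\cdot$ to turn $\alpha(x)\cdot(y\ast z)$ into $(y\ast z)\cdot\alpha(x)$, recognize this via \eqref{rightmult''} (with $x\leftrightarrow y$) as $(y\cdot x)\ast\alpha(z)$, and apply commutativity of $\cdot$ once more to get $(x\cdot y)\ast\alpha(z) = (x\ast z)\cdot\alpha(y)$.

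It is worth writing the forward direction as a short display to make the bookkeeping transparent:
\[
(x\cdot y)\ast\alpha(z) \;=\; (y\cdot x)\ast\alpha(z) \;=\; (y\ast z)\cdot\alpha(x) \;=\; \alpha(x)\cdot(y\ast z),
\]
where the first and third equalities are commutativity of $\cdot$ and the middle one is \eqref{rightmult'} with the roles of $x$ and $y$ swapped. The reverse direction is the identical display read from right to left, again with $x$ and $y$ interchanged in the middle step.

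For the final assertion, if $A$ is a Hom-Novikov-Poisson algebra then $\cdot$ is commutative (condition (1) of Definition \ref{def:hnp}) and \eqref{rightmult2}, i.e. \eqref{rightmult'}, holds by definition; hence the equivalence just proved yields \eqref{rightmult''} as well, and chaining the two identities gives
\[
(x\cdot y)\ast\alpha(z) = (x\ast z)\cdot\alpha(y) = \alpha(x)\cdot(y\ast z)
\]
for all $x,y,z\in A$. There is no real obstacle here; the only thing to be careful about is keeping track of which variable is swapped at which step, since the middle equality in each display uses the hypothesis with $x$ and $y$ exchanged rather than in the obvious order.
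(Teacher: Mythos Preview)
Your approach is exactly the paper's: use commutativity of $\cdot$ together with a relabelling of the first two variables to pass between \eqref{rightmult'} and \eqref{rightmult''}. The forward display is correct and matches the paper's argument.

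There is one bookkeeping slip in your converse. Reading your display from right to left (or following the prose chain you describe starting from $\alpha(x)\cdot(y\ast z)$) only leads back to $(x\cdot y)\ast\alpha(z)$, i.e.\ it re-derives \eqref{rightmult''} rather than \eqref{rightmult'}; the step ``recognize $(y\ast z)\cdot\alpha(x)$ via \eqref{rightmult''} with $x\leftrightarrow y$ as $(y\cdot x)\ast\alpha(z)$'' is not what \eqref{rightmult''} with swapped variables actually says. The correct converse chain is
\[
(x\cdot y)\ast\alpha(z) \;=\; (y\cdot x)\ast\alpha(z) \;=\; \alpha(y)\cdot(x\ast z) \;=\; (x\ast z)\cdot\alpha(y),
\]
where the middle equality is \eqref{rightmult''} applied to the triple $(y,x,z)$. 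This is purely a variable-tracking issue, not a mathematical gap; once fixed, the argument is identical to the paper's.
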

%%%%%%%%%%%%%%%%%%%%%

\begin{proof}
By the commutativity of $\cdot$ we have
\[
(x \cdot y) \ast \alpha(z)
= (y \cdot x) \ast \alpha(z)\andspace
(y \ast z) \cdot \alpha(x)\\
= \alpha(x) \cdot (y \ast z).
\]
Therefore, the equality
\[
(y \cdot x) \ast \alpha(z) = (y \ast z) \cdot \alpha(x)
\]
holds for all $x,y,z \in A$ (which is equivalent to \eqref{rightmult'}) if and only if \eqref{rightmult''} holds for all $x,y,z \in A$.
\end{proof}

Let us note that every non-trivial commutative Hom-associative algebra has a canonical non-trivial Hom-Novikov-Poisson algebra structure.  To prove this result, we need the following preliminary observation, which will be used many times below.

%%%%%%%%%%%%%%%%%%%
\begin{lemma}
\label{lem:comm}
Let $(A,\cdot,\alpha)$ be a commutative Hom-associative algebra.  Then the expressions
\[
(x\cdot y)\cdot \alpha(z) = \alpha(x) \cdot (y \cdot z)
\]
are both invariant under every permutation of $x,y,z \in A$.
\end{lemma}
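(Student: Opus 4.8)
The plan is to prove the permutation-invariance by first establishing it for the adjacent transposition that swaps the last two variables, then for the transposition that swaps the first two, and finally combining these to get the full symmetric group $S_3$ (which is generated by those two transpositions). Throughout I will write the common value simply as $(x\cdot y)\cdot\alpha(z)$, using Hom-associativity (Definition \ref{def:homass}) to identify it with $\alpha(x)\cdot(y\cdot z)$ whenever convenient.

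First I would handle the swap $z\leftrightarrow y$. Starting from $(x\cdot y)\cdot\alpha(z)$, apply commutativity to the inner product to get $(y\cdot x)\cdot\alpha(z)$, then use Hom-associativity to rewrite this as $\alpha(y)\cdot(x\cdot z)$, then commutativity of the outer product gives $(x\cdot z)\cdot\alpha(y)$, which is the original expression with $y$ and $z$ interchanged. Hence the expression is invariant under the transposition $(y\,z)$.

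Next I would handle the swap $x\leftrightarrow y$. Here commutativity of the inner product $x\cdot y = y\cdot x$ immediately gives $(x\cdot y)\cdot\alpha(z) = (y\cdot x)\cdot\alpha(z)$, so invariance under the transposition $(x\,y)$ is essentially free. Since $S_3$ is generated by the transpositions $(x\,y)$ and $(y\,z)$, invariance under both of these forces invariance under all six permutations, which proves the claim for the expression $(x\cdot y)\cdot\alpha(z)$; the identity $(x\cdot y)\cdot\alpha(z)=\alpha(x)\cdot(y\cdot z)$ from Hom-associativity then transports the full symmetry to the second expression as well.

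There is essentially no serious obstacle here: the only thing to be slightly careful about is not to confuse an application of commutativity on the inner product with one on the outer product, and to keep track of which of the two equivalent forms of the expression one is currently writing. I expect the argument to occupy only a few lines; the "hard part," such as it is, is merely the bookkeeping of chaining commutativity and Hom-associativity in the right order, and noting explicitly that two transpositions generate $S_3$ so that no further cases need to be checked.
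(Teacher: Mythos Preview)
Your proposal is correct and follows essentially the same approach as the paper's own proof: both arguments establish invariance under the transposition $(x\,y)$ directly from commutativity of $\cdot$, obtain invariance under $(y\,z)$ via the three-step chain (commutativity, Hom-associativity, commutativity), and then invoke the fact that $S_3$ is generated by these two transpositions. The only cosmetic difference is the order in which you present the two transpositions.
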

%%%%%%%%%%%%%%%%%%%

\begin{proof}
The expression $(x\cdot y) \cdot \alpha(z)$ is symmetric in $x$ and $y$ because $\cdot$ is symmetric.  Moreover, it is symmetric in $y$ and $z$ because
\begin{equation}
\label{xyz}
\begin{split}
(x \cdot y) \cdot \alpha(z)
&= (y \cdot x) \cdot \alpha(z) \quad\text{(by commutativity)}\\
&= \alpha(y) \cdot (x \cdot z) \quad\text{(by Hom-associativity)}\\
&= (x \cdot z) \cdot \alpha(y) \quad\text{(by commutativity)}.
\end{split}
\end{equation}
Since the symmetric group $S_3$ in three letters is generated by the transpositions $(1~2)$ and $(2~3)$, we conclude that the expression $(x\cdot y)\cdot\alpha(z)$ is invariant under permutations of $x,y,z$.
\end{proof}

%%%%%%%%%%%%%%%%%%%%%
\begin{proposition}
\label{prop:homass}
Let $(A,\cdot,\alpha)$ be a commutative Hom-associative algebra.  Then $(A,\cdot,\cdot,\alpha)$ is a Hom-Novikov-Poisson algebra.
\end{proposition}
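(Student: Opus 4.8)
The plan is to verify directly that taking $\ast = \cdot$ turns the double Hom-algebra $(A,\cdot,\cdot,\alpha)$ into a Hom-Novikov-Poisson algebra by checking the four defining conditions of Definition \ref{def:hnp}. Condition (1) is exactly the hypothesis. For condition (2), I must show that $(A,\cdot,\alpha)$ is a Hom-Novikov algebra, i.e.\ that \eqref{leftsymmetric} and \eqref{rightmult} hold with $\mu = \cdot$. But when $\mu_1 = \mu_2 = \cdot$, the mixed Hom-associator of Definition \ref{def2:homassociator} is literally the ordinary Hom-associator $as_A$ of Definition \ref{def:homassociator}, which vanishes by Hom-associativity; hence $as_A(x,y,z) = 0 = as_A(y,x,z)$, giving \eqref{leftsymmetric}. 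For \eqref{rightmult}, $(x\cdot y)\cdot\alpha(z) = (x\cdot z)\cdot\alpha(y)$ is precisely the symmetry in $y$ and $z$ of the expression $(x\cdot y)\cdot\alpha(z)$, which is supplied by Lemma \ref{lem:comm} (this is the content of display \eqref{xyz}).

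It remains to check the two compatibility conditions \eqref{mixedass} and \eqref{rightmult2}. Again, since $\mu_1 = \mu_2 = \cdot$, the mixed Hom-associator $as_A$ appearing in \eqref{mixedass} coincides with the ordinary Hom-associator, which is identically zero; thus \eqref{mixedass} reads $0 = 0$. Finally \eqref{rightmult2} becomes $(x\cdot y)\cdot\alpha(z) = (x\cdot z)\cdot\alpha(y)$, which is the same instance of Lemma \ref{lem:comm} already used for \eqref{rightmult}. Alternatively, one can invoke Lemma \ref{lem:rightmult}: since $\cdot$ is commutative, \eqref{rightmult'} is equivalent to \eqref{rightmult''}, and \eqref{rightmult''} with $\ast = \cdot$ is $(x\cdot y)\cdot\alpha(z) = \alpha(x)\cdot(y\cdot z)$, i.e.\ Hom-associativity.

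There is no real obstacle here: every required identity is either a hypothesis, Hom-associativity, or an immediate consequence of Lemma \ref{lem:comm}. The only point needing a moment's care is the bookkeeping observation that all the twisted identities defining a Hom-Novikov-Poisson algebra degenerate, when the two products agree, into statements about a single commutative Hom-associative product — the left-symmetry conditions collapsing to the vanishing of $as_A$, and the right-multiplication conditions collapsing to the $S_3$-invariance of Lemma \ref{lem:comm}. I would therefore present the proof as a short four-item checklist keyed to Definition \ref{def:hnp}, citing Lemma \ref{lem:comm} for the right-multiplication identities \eqref{rightmult} and \eqref{rightmult2}.
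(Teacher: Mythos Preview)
Your proposal is correct and follows essentially the same approach as the paper: the paper likewise observes that when $\ast = \cdot$ the Hom-Novikov identities \eqref{homnovikov} coincide with the compatibility conditions \eqref{hnp}, dispatches the left-symmetry conditions via $as_A = 0$, and invokes Lemma \ref{lem:comm} for the right-multiplication identity. Your write-up is somewhat more detailed (e.g.\ the alternate appeal to Lemma \ref{lem:rightmult}), but the underlying argument is identical.
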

%%%%%%%%%%%%%%%%%%%%%

\begin{proof}
Indeed, in this case the defining identities \eqref{homnovikov} of a Hom-Novikov algebra coincide with the compatibility conditions \eqref{hnp}.  The condition \eqref{leftsymmetric} holds because $as_A = 0$ by Hom-associativity.  The condition \eqref{rightmult} holds by Lemma \ref{lem:comm}.
\end{proof}

The next result says that Hom-Novikov-Poisson algebras are closed under twisting by self-weak morphisms.  As we will see, this property is unique to Hom-Novikov-Poisson algebras, as Novikov-Poisson algebras are \emph{not} closed under such twistings.

%%%%%%%%%%%%%%%%%
\begin{theorem}
\label{thm:twist}
Let $(A, \cdot, \ast, \alpha)$ be a Hom-Novikov-Poisson algebra, and let $\beta \colon A \to A$ be a weak morphism.  Then
\[
A_\beta = (A,\beta\cdot,\beta\ast,\beta\alpha)
\]
is also a Hom-Novikov-Poisson algebra.  Moreover, if $A$ is multiplicative and $\beta$ is a morphism, then $A_\beta$ is also multiplicative.
\end{theorem}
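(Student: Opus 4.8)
The plan is to verify, one by one, that the twisted quadruple $A_\beta = (A,\beta\cdot,\beta\ast,\beta\alpha)$ satisfies the five conditions packaged into Definition \ref{def:hnp}: that $(A,\beta\cdot,\beta\alpha)$ is a commutative Hom-associative algebra, that $(A,\beta\ast,\beta\alpha)$ is a Hom-Novikov algebra, and that the two compatibility conditions \eqref{mixedass} and \eqref{rightmult2} hold for the twisted operations. The governing principle throughout is that each defining identity of a Hom-Novikov-Poisson algebra is \emph{quadratic} in the products and involves the twisting map exactly once on one tensor factor; applying a weak morphism $\beta$ therefore pulls out a clean power of $\beta$ from every term. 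Concretely, for the first factor I would compute $as_{A_\beta}$ (the mixed Hom-associator of $A_\beta$) by noting
\[
(\beta\ast)\bigl((\beta\cdot)\otimes(\beta\alpha)\bigr) - (\beta\ast)\bigl((\beta\alpha)\otimes(\beta\cdot)\bigr)
= \beta^2 \circ \Bigl[\ast\bigl(\cdot\otimes\alpha\bigr) - \ast\bigl(\alpha\otimes\cdot\bigr)\Bigr] \circ \beta^{\otimes 2}\bigr|_{\text{suitably}},
\]
so that $as_{A_\beta} = \beta^2 \circ as_A \circ \beta^{\otimes 3}$ up to the bookkeeping of where the $\beta$'s land; since $\beta^{\otimes 3}$ is symmetric in the first two slots and $as_A$ is left-symmetric by hypothesis, $as_{A_\beta}$ is left-symmetric, giving \eqref{mixedass}. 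The same mechanism handles Hom-associativity of $\beta\cdot$ (the relevant identity becomes $\beta^2(\text{something that vanishes})$), commutativity of $\beta\cdot$ (immediate since $\beta\cdot = \beta\,\muop_\cdot$ as well), the two Hom-Novikov axioms \eqref{leftsymmetric}–\eqref{rightmult} for $\beta\ast$, and the compatibility \eqref{rightmult2}.

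The key computational lemma I would isolate is this: if $\mu_1,\mu_2 \in \{\cdot,\ast\}$ and $\beta$ is a weak morphism for both, then for all $x,y,z$,
\[
(\mu_1)_\beta\bigl((\mu_2)_\beta(x,y),\,(\beta\alpha)(z)\bigr) = \beta^2\bigl(\mu_1(\mu_2(x,y),\alpha(z))\bigr),
\]
because $(\mu_1)_\beta((\mu_2)_\beta(x,y),\beta\alpha(z)) = \beta\mu_1(\beta\mu_2(x,y),\beta\alpha(z)) = \beta\mu_1(\beta(\mu_2(x,y)),\beta(\alpha(z)))$ and then one more application of the weak-morphism property of $\beta$ for $\mu_1$ pulls the inner $\beta$'s out as $\beta^2$. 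An analogous identity holds with $\alpha$ on the first slot. Granting this, every defining identity of $A_\beta$ is precisely $\beta^2$ applied to the corresponding identity of $A$, so it holds. Note that this is the step where it is essential that $\beta$ is a weak morphism (i.e. commutes past \emph{both} products), not merely a morphism of the Hom-module; without it the twisted products would not factor through $\beta^2$ so cleanly, which is exactly why ordinary Novikov-Poisson algebras (where one would need $\beta$ to be an algebra endomorphism and $\alpha=Id$) are not closed under this construction.

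For the multiplicativity addendum: assuming $A$ is multiplicative, so $\alpha\mu_i = \mu_i\alpha^{\otimes 2}$ for $\mu_i\in\{\cdot,\ast\}$, and assuming $\beta$ is a morphism, so $\beta\alpha = \alpha\beta$, I would check $(\beta\alpha)(\mu_i)_\beta = (\mu_i)_\beta(\beta\alpha)^{\otimes 2}$. Expanding the left side: $(\beta\alpha)(\beta\mu_i) = \beta\alpha\beta\mu_i = \beta^2\alpha\mu_i = \beta^2\mu_i\alpha^{\otimes 2}$ using $\alpha\beta=\beta\alpha$ and then multiplicativity of $\mu_i$. Expanding the right side: $(\beta\mu_i)(\beta\alpha)^{\otimes 2} = \beta\mu_i(\beta\alpha)^{\otimes 2} = \beta\mu_i\beta^{\otimes 2}\alpha^{\otimes 2} = \beta(\beta\mu_i)\alpha^{\otimes 2} = \beta^2\mu_i\alpha^{\otimes 2}$, using the weak-morphism property of $\beta$ for $\mu_i$. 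The two sides agree, so $A_\beta$ is multiplicative.

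Honestly, I expect no serious obstacle here — the proof is routine once the factorization-through-$\beta^2$ observation is made, and the main point is organizational: writing the five (really seven, counting the sub-identities) verifications compactly enough that the reader sees they are all instances of the same phenomenon. The one place to be slightly careful is \eqref{mixedass}/\eqref{leftsymmetric}: one must confirm that "apply $\beta^2$ on the outside and $\beta^{\otimes 3}$ on the inside" genuinely preserves left-symmetry, which it does because $\beta^{\otimes 3}$ fixes the first-two-slot transposition and postcomposition with $\beta^2$ is linear, so $as_{A_\beta} - as_{A_\beta}\circ(\tau\otimes Id) = \beta^2\circ\bigl(as_A - as_A\circ(\tau\otimes Id)\bigr)\circ\beta^{\otimes 3} = 0$.
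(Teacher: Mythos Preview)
Your approach is correct and is exactly the paper's: each defining identity of $A_\beta$ is $\beta^2$ applied to the corresponding identity of $A$, and the multiplicativity check is the direct computation you wrote. One small bookkeeping correction: the precise relation is $as_{A_\beta} = \beta^2 \circ as_A$ (and likewise for the other associators), with no extra $\beta^{\otimes 3}$ on the inputs---your key computational lemma already shows this, so the stray $\beta^{\otimes 3}$ in your displayed formula and in the final line should simply be dropped.
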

%%%%%%%%%%%%%%%%%

\begin{proof}
To see that $A_\beta$ is a Hom-Novikov-Poisson algebra, one applies $\beta^2$ to (i) the Hom-associativity \eqref{homassociative} of $(A,\cdot,\alpha)$, (ii) the conditions \eqref{homnovikov} of the Hom-Novikov algebra $(A,\ast,\alpha)$, and (iii) the compatibility conditions \eqref{hnp}.  The multiplicativity assertion follows from a direct computation.
\end{proof}

Let us discuss some special cases of Theorem \ref{thm:twist}.  The following special case says that every multiplicative Hom-Novikov-Poisson algebra induces a sequence of multiplicative Hom-Novikov-Poisson algebras by twisting against its own twisting map.

%%%%%%%%%%%%%%%%%%%
\begin{corollary}
\label{cor1:twist}
Let $(A, \cdot, \ast, \alpha)$ be a multiplicative Hom-Novikov-Poisson algebra.  Then
\[
A^n = (A, \alpha^n\cdot,  \alpha^n\ast, \alpha^{n+1})
\]
is also a multiplicative Hom-Novikov-Poisson algebra for each $n \geq 0$.
\end{corollary}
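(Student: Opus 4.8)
The plan is to derive Corollary \ref{cor1:twist} as an iterated application of Theorem \ref{thm:twist}, taking the weak morphism $\beta$ to be a power of $\alpha$ itself. First I would observe that, since $A$ is multiplicative, $\alpha$ satisfies $\alpha(x\cdot y)=\alpha(x)\cdot\alpha(y)$ and $\alpha(x\ast y)=\alpha(x)\ast\alpha(y)$, so $\alpha$ is a weak morphism $A\to A$; being a self-map that evidently commutes with itself, it is in fact a morphism of double Hom-algebras. Hence Theorem \ref{thm:twist} applies with $\beta=\alpha$, and $A_\alpha=(A,\alpha\cdot,\alpha\ast,\alpha^2)=A^1$ is again a multiplicative Hom-Novikov-Poisson algebra (the multiplicativity clause of Theorem \ref{thm:twist} applies precisely because $\alpha$ is a morphism and $A$ is multiplicative).

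Next I would set up an induction on $n$. The base case $n=0$ is trivial since $A^0=(A,\alpha^0\cdot,\alpha^0\ast,\alpha)=(A,\cdot,\ast,\alpha)=A$. For the inductive step, suppose $A^n=(A,\alpha^n\cdot,\alpha^n\ast,\alpha^{n+1})$ is a multiplicative Hom-Novikov-Poisson algebra; I want to identify $(A^n)_\alpha$ with $A^{n+1}$. Applying the twisting construction of Theorem \ref{thm:twist} to $A^n$ with the weak morphism $\alpha$ gives the double Hom-algebra with products $\alpha\cdot(\alpha^n\cdot)=\alpha^{n+1}\cdot$ and $\alpha\cdot(\alpha^n\ast)=\alpha^{n+1}\ast$ and twisting map $\alpha\cdot\alpha^{n+1}=\alpha^{n+2}$, which is exactly $A^{n+1}$. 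One subtlety worth checking: I must confirm that $\alpha$ is still a weak morphism with respect to the \emph{new} products $\alpha^n\cdot$ and $\alpha^n\ast$ of $A^n$; this holds because $\alpha\bigl(\alpha^n(x)\cdot\alpha^n(y)\bigr)=\alpha^n\bigl(\alpha(x)\bigr)\cdot\alpha^n\bigl(\alpha(y)\bigr)$ by multiplicativity of $A$ (and similarly for $\ast$), so $\alpha$ commutes with the twisted products, and it commutes with $\alpha^{n+1}$ trivially, so $\alpha$ is a morphism of double Hom-algebras $A^n\to A^n$.

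The main obstacle—really the only place requiring care—is the bookkeeping of which structure map plays the role of "$\beta$" and which of "$\alpha$" at each stage, and verifying that the multiplicativity hypothesis needed to invoke the strengthened conclusion of Theorem \ref{thm:twist} propagates through the induction. Since the inductive hypothesis includes multiplicativity of $A^n$ and $\alpha$ is a genuine morphism (not merely a weak morphism) of $A^n$, the multiplicativity clause of Theorem \ref{thm:twist} yields multiplicativity of $A^{n+1}$, closing the induction. Everything else is a direct unwinding of the definitions $A_\beta=(A,\beta\cdot,\beta\ast,\beta\alpha)$ with the associativity of composition of linear self-maps, so I would present the argument compactly rather than expanding the twisted identities by hand.
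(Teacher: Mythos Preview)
Your proposal is correct and uses essentially the same idea as the paper: invoke Theorem \ref{thm:twist} with a power of $\alpha$ as the (weak) morphism, relying on multiplicativity to ensure it is in fact a morphism. The only difference is that the paper applies Theorem \ref{thm:twist} once with $\beta=\alpha^n$ (noting directly that multiplicativity makes $\alpha^n$ a morphism, so $A_{\alpha^n}=A^n$), whereas you reach the same conclusion by induction with $\beta=\alpha$ at each step; the one-shot version is shorter but the content is identical.
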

%%%%%%%%%%%%%%%%%%%

\begin{proof}
Since $A$ is multiplicative, $\alpha^n \colon A \to A$ is a morphism.  By Theorem \ref{thm:twist} $A_{\alpha^n} = A^n$ is a multiplicative Hom-Novikov-Poisson algebra
\end{proof}

The following result is the special case of Corollary \ref{cor1:twist} with $\cdot = 0$.

%%%%%%%%%%%%%%%%%%%%%%
\begin{corollary}
\label{cor1.5:twist}
Let $(A,*,\alpha)$ be a multiplicative Hom-Novikov algebra.  Then
\[
A^n = (A, \alpha^n\ast, \alpha^{n+1})
\]
is also a multiplicative Hom-Novikov algebra for each $n \geq 0$.
\end{corollary}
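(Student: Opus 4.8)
The plan is to deduce this from Corollary \ref{cor1:twist} by specializing the commutative associative product to zero. First I would note that if $(A,\ast,\alpha)$ is a multiplicative Hom-Novikov algebra, then $(A, 0, \ast, \alpha)$, where $0$ denotes the trivial bilinear map, is a multiplicative Hom-Novikov-Poisson algebra. Indeed, $0$ is vacuously commutative, Hom-associative, and multiplicative, so condition (1) of Definition \ref{def:hnp} holds; condition (2) is the hypothesis; and in the two compatibility conditions \eqref{hnp} every term carries at least one factor of $\cdot = 0$, so both \eqref{mixedass} and \eqref{rightmult2} reduce to $0 = 0$. Concretely, the mixed Hom-associator is $as_A(x,y,z) = (x\ast y)\cdot\alpha(z) - \alpha(x)\ast(y\cdot z) = 0$, and $(x\cdot y)\ast\alpha(z) = 0 = (x\ast z)\cdot\alpha(y)$.

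Next I would apply Corollary \ref{cor1:twist} to the multiplicative Hom-Novikov-Poisson algebra $(A,0,\ast,\alpha)$. Since $\alpha^n$ composed with the zero map is again $0$, the corollary gives that
\[
\bigl(A,\, 0,\, \alpha^n\ast,\, \alpha^{n+1}\bigr)
\]
is a multiplicative Hom-Novikov-Poisson algebra for every $n \geq 0$. Discarding the trivial commutative associative product, its Hom-Novikov part $(A, \alpha^n\ast, \alpha^{n+1})$ is then a multiplicative Hom-Novikov algebra, which is exactly the assertion about $A^n$.

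I do not expect a genuine obstacle here: the only point to verify is that setting $\cdot = 0$ does produce a Hom-Novikov-Poisson algebra, and this is immediate because the compatibility axioms \eqref{hnp} are linear in the $\cdot$-slot. As an alternative one could give a self-contained proof mirroring that of Theorem \ref{thm:twist}: apply $\alpha^{2n}$ to the two identities \eqref{homnovikov} for $(A,\ast,\alpha)$ and use multiplicativity, $\alpha\ast = \ast\alpha^{\otimes 2}$, to commute the powers of $\alpha$ past the products; this is the same computation that underlies Corollary \ref{cor1:twist} with the $\cdot$-terms removed.
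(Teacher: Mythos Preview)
Your proposal is correct and follows exactly the paper's approach: the paper states this corollary as the special case of Corollary~\ref{cor1:twist} with $\cdot = 0$ and gives no further proof, and you have simply filled in the routine verification that setting $\cdot = 0$ yields a multiplicative Hom-Novikov-Poisson algebra before invoking that corollary.
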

%%%%%%%%%%%%%%%%%%%%%%

The following result is the special case of Corollary \ref{cor1:twist} with $* = 0$.

%%%%%%%%%%%%%%%%%%%%%%
\begin{corollary}
\label{cor1.6:twist}
Let $(A,\cdot,\alpha)$ be a multiplicative commutative Hom-associative algebra.  Then
\[
A^n = (A, \alpha^n\cdot, \alpha^{n+1})
\]
is also a multiplicative commutative Hom-associative algebra for each $n \geq 0$.
\end{corollary}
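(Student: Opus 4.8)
The plan is to deduce this directly from Corollary \ref{cor1:twist} by taking the Hom-Novikov product to be the zero map. First I would note that if $(A,\cdot,\alpha)$ is a commutative Hom-associative algebra and $0$ denotes the trivial bilinear map $A^{\otimes 2}\to A$, then $(A,\cdot,0,\alpha)$ is a Hom-Novikov-Poisson algebra: condition (1) of Definition \ref{def:hnp} is the hypothesis; condition (2) holds because the zero product satisfies both identities in \eqref{homnovikov} trivially (every term in \eqref{leftsymmetric} and \eqref{rightmult} vanishes when the product is $0$); and the compatibility conditions \eqref{hnp} hold because, with $\ast=0$, the mixed Hom-associator $as_A$ is identically zero (so \eqref{mixedass} is trivially left-symmetric) and both sides of \eqref{rightmult2} vanish. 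Moreover, if $A$ is multiplicative then so is $(A,\cdot,0,\alpha)$, since $\alpha\circ 0 = 0 = 0\circ\alpha^{\otimes 2}$.

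Next I would apply Corollary \ref{cor1:twist} to the multiplicative Hom-Novikov-Poisson algebra $(A,\cdot,0,\alpha)$, which gives that
\[
(A,\cdot,0,\alpha)^n = (A,\alpha^n\cdot,\,\alpha^n\circ 0,\,\alpha^{n+1}) = (A,\alpha^n\cdot,0,\alpha^{n+1})
\]
is a multiplicative Hom-Novikov-Poisson algebra for every $n\geq 0$. Forgetting the (trivial) Hom-Novikov product, part (1) of Definition \ref{def:hnp} says exactly that $(A,\alpha^n\cdot,\alpha^{n+1})$ is a commutative Hom-associative algebra, and the multiplicativity of $(A,\cdot,0,\alpha)^n$ restricts to the multiplicativity of $(A,\alpha^n\cdot,\alpha^{n+1})$. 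This is the desired conclusion.

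I do not anticipate any genuine obstacle: each step is a routine specialization, and the only point that needs checking is that the zero map is a legitimate Hom-Novikov product that is compatible with $\cdot$, which is immediate. For completeness one can also argue directly without invoking the Hom-Novikov-Poisson machinery: multiplicativity makes $\alpha^n\colon (A,\cdot,\alpha)\to(A,\cdot,\alpha)$ a morphism of Hom-algebras, and applying $\alpha^{2n}$ to the Hom-associativity identity \eqref{homassociative} of $(A,\cdot,\alpha)$ and collecting powers of $\alpha$ shows that $(A,\alpha^n\cdot,\alpha^{n+1})$ is Hom-associative; commutativity of $\alpha^n\cdot$ is inherited from that of $\cdot$, and the identity $\alpha^{n+1}(\alpha^n\cdot) = (\alpha^n\cdot)(\alpha^{n+1})^{\otimes 2}$ is again just a matter of counting powers of $\alpha$.
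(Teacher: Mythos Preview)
Your proposal is correct and follows exactly the paper's approach: the paper states this corollary as the special case of Corollary~\ref{cor1:twist} with $* = 0$, and you have correctly filled in the verification that the zero product yields a multiplicative Hom-Novikov-Poisson algebra. Your supplementary direct argument is also valid but unnecessary here.
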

%%%%%%%%%%%%%%%%%%%%%%

The following result is the $\alpha = Id_A$ special case of Theorem \ref{thm:twist}.

%%%%%%%%%%%%%%%%%%
\begin{corollary}
\label{cor2:twist}
Let $(A,\cdot,\ast)$ be a Novikov-Poisson algebra and $\beta \colon A \to A$ be a morphism.  Then
\[
A_\beta = (A,\beta\cdot,\beta\ast,\beta)
\]
is a multiplicative Hom-Novikov-Poisson algebra.
\end{corollary}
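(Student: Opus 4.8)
The plan is to deduce this as the $\alpha = Id_A$ instance of Theorem \ref{thm:twist}. First I would regard $(A,\cdot,\ast)$ as a Hom-Novikov-Poisson algebra in the sense of Definition \ref{def:hnp} by equipping it with the twisting map $\alpha = Id_A$; by the discussion following Definition \ref{def:hnp}, this is exactly what a Novikov-Poisson algebra is, and it is in particular a \emph{multiplicative} Hom-Novikov-Poisson algebra (again by that discussion, since a Novikov-Poisson algebra is by definition multiplicative with $\alpha = Id$).

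Next I would observe that the hypothesis that $\beta$ is a morphism of the Novikov-Poisson algebra means precisely that $\beta$ is a morphism of the double Hom-algebra $(A,\cdot,\ast,Id_A)$ in the sense of Definition \ref{def:hommodule}(5): the conditions $\beta\mu_i = \mu_i\beta^{\otimes 2}$ for $i = 1,2$ say that $\beta$ respects both $\cdot$ and $\ast$, while the extra condition $\beta\alpha_A = \alpha_B\beta$ reads $\beta\circ Id_A = Id_A\circ\beta$ and hence holds automatically. In particular $\beta$ is a weak morphism.

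Then I would apply Theorem \ref{thm:twist} to this $\beta$, which gives that
\[
A_\beta = (A,\beta\cdot,\beta\ast,\beta\alpha) = (A,\beta\cdot,\beta\ast,\beta)
\]
is a Hom-Novikov-Poisson algebra, using $\beta\alpha = \beta Id_A = \beta$. Finally, since $A$ (with $\alpha = Id_A$) is multiplicative and $\beta$ is a morphism, the ``moreover'' clause of Theorem \ref{thm:twist} yields that $A_\beta$ is multiplicative as well.

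There is no real obstacle here, as every step is a direct appeal to Theorem \ref{thm:twist} together with the definitions; the only point needing (trivial) care is the identification of a morphism of a double Hom-algebra whose twisting map is $Id_A$ with a weak morphism, i.e., noting that the compatibility $\beta\alpha_A = \alpha_B\beta$ is vacuous when $\alpha = Id$.
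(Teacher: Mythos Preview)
Your proposal is correct and matches the paper's approach exactly: the paper simply states that this corollary is the $\alpha = Id_A$ special case of Theorem \ref{thm:twist}, and your write-up just spells out this specialization carefully.
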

%%%%%%%%%%%%%%%%%%

Corollary \ref{cor2:twist} says that multiplicative Hom-Novikov-Poisson algebras can be constructed from Novikov-Poisson algebras and their morphisms.  A construction result of this form was first given by the author in \cite{yau2} for $G$-Hom-associative algebras.  This twisting construction highlights the fact that the category of Novikov-Poisson algebras is \emph{not} closed under twisting by self-morphisms.  In view of Theorem \ref{thm:twist}, this is a major conceptual difference between Hom-Novikov-Poisson algebras and Novikov-Poisson algebras.

The following special case of Corollary \ref{cor2:twist} is useful for constructing examples of Hom-Novikov-Poisson algebras.

%%%%%%%%%%%%%%%%%%
\begin{corollary}
\label{cor3:twist}
Let $(A,\mu)$ be a commutative associative algebra, $\partial \colon A \to A$ be a derivation, and $\alpha \colon A \to A$ be an algebra morphism such that $\alpha\partial = \partial\alpha$.  Then $A_\alpha = (A,\cdot, \ast, \alpha)$ is a multiplicative Hom-Novikov-Poisson algebra, where
\[
x \cdot y = \alpha\mu(x,y)\quad\text{and}\quad
x \ast y = \alpha\mu(x,\partial y)
\]
for $x,y \in A$.
\end{corollary}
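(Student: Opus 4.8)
The plan is to deduce this from Corollary~\ref{cor2:twist} by first exhibiting an ordinary Novikov-Poisson algebra structure on $(A,\mu)$ and then checking that $\alpha$ is a morphism of it. Define an auxiliary product $x \ast_0 y = \mu(x,\partial y)$ on $A$. I claim $(A,\mu,\ast_0)$ is a Novikov-Poisson algebra, i.e., a multiplicative Hom-Novikov-Poisson algebra with twisting map $Id$. Commutativity and associativity of $\mu$ are given. For the Hom-Novikov axioms \eqref{homnovikov} of $\ast_0$ with $\alpha = Id$, a single use of the Leibniz rule for $\partial$ gives $as_{\ast_0}(x,y,z) = -\mu(\mu(x,y),\partial^2 z)$, which is symmetric in $x,y$ by commutativity of $\mu$, so \eqref{leftsymmetric} holds; and $(x \ast_0 y)\ast_0 z = \mu(\mu(x,\partial y),\partial z)$ is symmetric in $y,z$, so \eqref{rightmult} holds. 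For the compatibility conditions \eqref{hnp}, the Leibniz rule again gives that the mixed Hom-associator is $as_A(x,y,z) = -\mu(\mu(x,y),\partial z)$, which is symmetric in $x,y$ (so \eqref{mixedass} holds), while both sides of \eqref{rightmult2} equal $\mu(\mu(x,y),\partial z)$ (so \eqref{rightmult2} holds). Alternatively, one may simply cite \cite{xu1} for the fact that $(A,\mu,\ast_0)$ is a Novikov-Poisson algebra.

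Next I would verify that $\alpha$ is a morphism of the double Hom-algebra $(A,\mu,\ast_0)$ in the sense of Definition~\ref{def:hommodule}. Multiplicativity with respect to $\mu$ is exactly the hypothesis that $\alpha$ is an algebra morphism. Multiplicativity with respect to $\ast_0$ follows from $\alpha\partial = \partial\alpha$:
\[
\alpha(x \ast_0 y) = \alpha\mu(x,\partial y) = \mu(\alpha(x),\alpha(\partial y)) = \mu(\alpha(x),\partial(\alpha(y))) = \alpha(x) \ast_0 \alpha(y).
\]
Since the twisting map of $(A,\mu,\ast_0)$ is $Id$, the commutation condition $\alpha\circ Id = Id\circ\alpha$ is automatic, so $\alpha$ is indeed a morphism of double Hom-algebras.

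Finally, Corollary~\ref{cor2:twist} applied to the Novikov-Poisson algebra $(A,\mu,\ast_0)$ and the morphism $\alpha$ yields that $(A,\alpha\mu,\alpha\ast_0,\alpha)$ is a multiplicative Hom-Novikov-Poisson algebra. It remains only to identify the two products with those in the statement: $\alpha\mu(x,y) = x \cdot y$ by definition, and $(\alpha\ast_0)(x,y) = \alpha(x \ast_0 y) = \alpha\mu(x,\partial y) = x \ast y$. Hence $A_\alpha = (A,\cdot,\ast,\alpha)$ is a multiplicative Hom-Novikov-Poisson algebra, as claimed.

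I expect no genuine obstacle here; the argument is an assembly of Corollary~\ref{cor2:twist} with a standard construction. The only mildly laborious point is the direct verification of the Novikov-Poisson axioms for $\ast_0$, which is routine book-keeping with the Leibniz rule and can in any case be replaced by a citation to \cite{xu1}.
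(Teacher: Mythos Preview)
Your proposal is correct and follows essentially the same route as the paper: exhibit the Novikov-Poisson algebra $(A,\mu,\ast_0)$ with $x\ast_0 y=\mu(x,\partial y)$ (the paper simply cites \cite{xu1} for this), verify that $\alpha$ is a morphism using $\alpha\partial=\partial\alpha$, and then invoke Corollary~\ref{cor2:twist}. The only difference is that you spell out the Leibniz-rule computations, which the paper omits in favor of the citation.
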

%%%%%%%%%%%%%%%%%%

\begin{proof}
It is known that $(A,\mu,\bullet)$ is a Novikov-Poisson algebra \cite{xu1} (Lemma 2.1), where
\[
x \bullet y = \mu(x,\partial y).
\]
(That $(A,\bullet)$ is a Novikov algebra has been known since \cite{dg3}.)  The assumptions on $\alpha$ imply that $\alpha \bullet = \bullet \alpha^{\otimes 2}$, so $\alpha$ is a morphism of Novikov-Poisson algebras.  The result now follows from Corollary \ref{cor2:twist}.
\end{proof}

The following examples illustrate Corollary \ref{cor3:twist}.

%%%%%%%%%%%%%%%%%
\begin{example}
\label{ex:jdelta}
Starting with an example in \cite{xu2}, we construct infinite-dimensional multiplicative Hom-Novikov-Poisson algebras that are not Novikov-Poisson algebras.

Let the ground field be the field $\bC$ of complex numbers.  Let $J$ be either $\{0\}$ or $\bN = \{0,1,2,\ldots\}$, $\Delta$ be an additive subgroup of $\bC$, and $A$ be the $\bC$-module spanned by the symbols $\{x_a^j \colon a \in \Delta,\, j \in J\}$.  Then $A$ is a commutative associative algebra with the multiplication
\[
x_a^j \cdot x_b^k = x_{a+b}^{j+k}
\]
for $a,b \in \Delta$ and $j,k \in J$.  It has a multiplicative identity
\[
1 = x_0^0.
\]
The map $\partial \colon A \to A$ defined by
\[
\partial(x_a^j) = ax_a^j + jx_a^{j-1}
\]
for $a \in \Delta$ and $j \in J$ is a derivation on $(A,\cdot)$ by \cite{xu2} (2.18).  Therefore, by \cite{xu2} Lemma 2.1, $(A,\cdot,\ast)$ is a Novikov-Poisson algebra, where
\[
\begin{split}
x_a^j \ast x_b^k &= x_a^j \cdot \partial (x_b^k)\\
&= bx_{a+b}^{j+k} + kx_{a+b}^{j+k-1}
\end{split}
\]
for $a,b \in \Delta$ and $j,k \in J$.  Note that if either $J = \bN$ or $\Delta \not= \{0\}$, then $A$ is infinite-dimensional.

Let $\alpha \colon A \to A$ be the linear map determined by
\begin{equation}
\label{alphaxai}
\alpha(x_a^j) = e^{a}x_a^j
\end{equation}
for $a \in \Delta$ and $j \in J$.  It is straightforward to check that $\alpha \colon A \to A$ is an algebra morphism on $(A,\cdot)$ such that $\alpha\partial = \partial\alpha$.  By Corollary \ref{cor3:twist} there is a multiplicative Hom-Novikov-Poisson algebra
\begin{equation}
\label{aalpha1}
A_\alpha = (A,\cdot_\alpha,\ast_\alpha,\alpha),
\end{equation}
in which
\[
\begin{split}
x_a^j \cdot_\alpha x_b^k &= e^{a+b}x_{a+b}^{j+k},\\
x_a^j \ast_\alpha x_b^k &= e^{a+b}\left(bx_{a+b}^{j+k} + kx_{a+b}^{j+k-1}\right)
\end{split}
\]
for $a,b \in \Delta$ and $j,k \in J$.

Note that $(A,\cdot_\alpha,\ast_\alpha)$ is not a Novikov-Poisson algebra, provided $\Delta \nsubseteq \{2n\pi i \colon n \in \bZ\}$.  Indeed, suppose there exists $a \in \Delta$ such that $a \not= 2n\pi i$ for any integer $n$.  Then $e^a \not= 0,1$.  Now on the one hand we have
\[
(1 \cdot_\alpha 1) \cdot_\alpha x^0_a = e^a x^0_{a}.
\]
On the other hand, we have
\[
1 \cdot_\alpha (1 \cdot_\alpha x^0_a) = e^{2a} x^0_a,
\]
which shows that $\cdot_\alpha$ is not associative.  Hence $(A,\cdot_\alpha,\ast_\alpha)$ is not a Novikov-Poisson algebra.
\qed
\end{example}
%%%%%%%%%%%%%%%%%

%%%%%%%%%%%%%%%%%
\begin{example}
\label{ex:nilpotent}
Let $(A,\cdot)$ be a commutative associative algebra and $\partial \colon A \to A$ be a nilpotent derivation, i.e., $\partial$ is a derivation such that $\partial^n = 0$ for some $n \geq 2$.  One can check that the formal exponential map
\[
\varphi = \sum_{k=0}^{n-1} \, \frac{\partial^k}{k!} = Id + \partial + \frac{\partial^2}{2!} + \cdots + \frac{\partial^{n-1}}{(n-1)!}
\]
is a well-defined algebra automorphism on $A$ \cite{abe} (p.26).  Moreover, $\varphi$ commutes with $\partial$ because $\varphi$ is a polynomial in $\partial$.  By Corollary \ref{cor3:twist} there is a multiplicative Hom-Novikov-Poisson algebra
\begin{equation}
\label{avarphi}
A_\varphi = (A,\cdot_\varphi,\ast_\varphi,\varphi),
\end{equation}
in which
\[
\begin{split}
f \cdot_\varphi g &= \varphi(f) \cdot \varphi(g) = \sum_{m=0}^{2n-2} \left(\sum_{k+j=m}\, \frac{(\partial^k f) \cdot (\partial^j g)}{k!j!}\right),\\
f \ast_\varphi g &= \varphi(f) \cdot \varphi(\partial g) = \sum_{m=0}^{2n-3} \left(\sum_{k+j=m}\, \frac{(\partial^k f) \cdot (\partial^{j+1} g)}{k!j!}\right)
\end{split}
\]
for $f,g \in A$.
\qed
\end{example}
%%%%%%%%%%%%%%%%%

%%%%%%%%%%%%%%%%%
\begin{example}
This example is a special case of Example \ref{ex:nilpotent} in which $(A,\cdot_\varphi,\ast_\varphi)$ is not a Novikov-Poisson algebra.

Let $A$ be the truncated polynomial algebra $\bk[x]/(x^N)$ for some integer $N \geq 2$.  The differential operator $\partial = \frac{d}{dx}$ is a nilpotent derivation on $A$ with $\partial^N = 0$.  The associated formal exponential is
\[
\varphi = \sum_{k=0}^{N-1} \frac{1}{k!}\left(\frac{d}{dx}\right)^k.
\]
As in Example \ref{ex:nilpotent}, $A_\varphi$ in \eqref{avarphi} is a multiplicative Hom-Novikov-Poisson algebra.  We claim that $(A,\cdot_\varphi,\ast_\varphi)$ is not a Novikov-Poisson algebra.  It suffices to show that $\cdot_\varphi$ is not associative.  The element $x \in A$ satisfies
\[
\varphi^k(x) = x + k
\]
for all $k \geq 1$.  Now on the one hand, we have
\[
\begin{split}
(x \cdot_\varphi x) \cdot_\varphi \varphi(x)
&= (x+1)^2 \cdot_\varphi (x+1)\\
&= (x+2)^3.
\end{split}
\]
On the other hand, we have
\[
\begin{split}
x \cdot_\varphi (x \cdot_\varphi \varphi(x))
&= x \cdot_\varphi ((x+1)(x+2))\\
&= (x+1)(x+2)(x+3).
\end{split}
\]
We have shown that $\cdot_\varphi$ is not associative, so $(A,\cdot_\varphi,\ast_\varphi)$ is not a Novikov-Poisson algebra.
\qed
\end{example}
%%%%%%%%%%%%%%%%%

%%%%%%%%%%%%%%%%%
\begin{example}
Let $A$ be the polynomial algebra $\bk[x_1, \ldots , x_n]$.  Fix an integer $j \in \{1,\ldots,n\}$.  The partial differential operator $\partial = \frac{\partial}{\partial x_j}$ with respect to $x_j$ is a derivation on the commutative associative algebra $A$.

For $1 \leq i \leq n$ let $c_i \in \bk$ be arbitrary scalars, and let $\alpha \colon A \to A$ be the algebra morphism determined by
\[
\alpha(x_i) = x_i + c_i.
\]
Then $\alpha$ commutes with $\partial$ because
\[
\begin{split}
\partial(\alpha(x_i^k))
&= \partial((x_i+c_i)^k)\\
&= \delta_{i,j} k(x_i+c_i)^{k-1}\\
&= \alpha(\delta_{i,j} kx_i^{k-1})\\
&= \alpha(\partial(x_i^k)).
\end{split}
\]
By Corollary \ref{cor3:twist} there is a multiplicative Hom-Novikov-Poisson algebra $(A,\cdot,\ast,\alpha)$, in which
\[
\begin{split}
f \cdot g &= f(x_1+c_1, \ldots, x_n+c_n)g(x_1+c_1, \ldots, x_n+c_n),\\
f \ast g &= f(x_1+c_1, \ldots, x_n+c_n)\frac{\partial g}{\partial x_j}(x_1+c_1, \ldots, x_n+c_n)
\end{split}
\]
for $f = f(x_1,\ldots,x_n)$ and $g = g(x_1,\ldots,x_n)$ in $A$.

Note that, as long as $\alpha$ is not the identity map, $(A,\cdot,\ast)$ is not a Novikov-Poisson algebra because $\cdot$ is not associative.  Indeed, suppose $c_i \not= 0$.  Then we have
\[
(1 \cdot 1) \cdot x_i = x_i+c_i,
\]
whereas
\[
1 \cdot (1 \cdot x_i) = x_i+2c_i.
\]
This shows that $\cdot$ is not associative, so $(A,\cdot,\ast)$ is not a Novikov-Poisson algebra.
\qed
\end{example}
%%%%%%%%%%%%%%%%%

%%%%%%%%%%%%%%%%%%%%%%%%%%
\section{Tensor products}
\label{sec:tensor}
%%%%%%%%%%%%%%%%%%%%%%%%%%

The tensor product of two Novikov algebras is usually not a Novikov algebra in a non-trivial way.  One reason Novikov-Poisson algebras were introduced by Xu \cite{xu1} was that, unlike Novikov algebras, Novikov-Poisson algebras are closed under tensor products non-trivially (\cite{xu1} Theorem 4.1).  Here we show that the much larger class of Hom-Novikov-Poisson algebras is also closed under tensor products.

%%%%%%%%%%%%%%%%%%
\begin{theorem}
\label{thm:tensor}
Let $(A_i,\cdot_i,\ast_i,\alpha_i)$ be Hom-Novikov-Poisson algebras for $i = 1,2$, and let $A = A_1 \otimes A_2$.  Define the operations $\alpha \colon A \to A$ and $\cdot, \ast \colon A^{\otimes 2} \to A$ by:
\[
\begin{split}
\alpha &= \alpha_1 \otimes \alpha_2,\\
(\xtwo) \cdot (\ytwo) &= (x_1 \cdot_1 y_1) \otimes (x_2 \cdot_2 y_2),\\
(\xtwo) \ast (\ytwo) &= (x_1 \ast_1 y_1) \otimes (x_2 \cdot_2 y_2) + (x_1 \cdot_1 y_1) \otimes (x_2 \ast_2 y_2)
\end{split}
\]
for $x_i,y_i \in A_i$.  Then $(A,\cdot,\ast,\alpha)$ is a Hom-Novikov-Poisson algebra.  Moreover, if both $A_i$ are multiplicative, then $A$ is also multiplicative.
\end{theorem}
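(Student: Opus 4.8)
The plan is to verify, on decomposable tensors $x = \xtwo$, $y = \ytwo$, $z = \ztwo$, each clause of Definition~\ref{def:hnp} for $(A,\cdot,\ast,\alpha)$: that $(A,\cdot,\alpha)$ is a commutative Hom-associative algebra, that $(A,\ast,\alpha)$ is a Hom-Novikov algebra (conditions \eqref{leftsymmetric} and \eqref{rightmult} for $\ast$), and the two compatibility conditions \eqref{mixedass} and \eqref{rightmult2}. Commutativity of $\cdot$, Hom-associativity of $\cdot$, and the final multiplicativity assertion are immediate by applying the corresponding property of each $\cdot_i$ and $\ast_i$ slotwise (and, for $\ast$, distributing over its two-term defining formula). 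So the work lies in the four remaining conditions, and the uniform device is: expand each iterated product into decomposable tensors, sort them according to how many of the (one or two) $\ast$-operations present end up acting in the first tensor slot rather than the second, and handle each such \emph{type} by reducing to identities in $A_1$ and $A_2$. Throughout, Lemma~\ref{lem:comm} lets me permute arguments of triple $\cdot$-products freely in each factor, and Lemma~\ref{lem:rightmult} supplies the three equivalent forms $(a\cdot b)\ast\alpha(c) = (a\ast c)\cdot\alpha(b) = \alpha(a)\cdot(b\ast c)$ of the compatibility condition in each $A_i$.

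First I would dispatch the ``right multiplication'' conditions. For \eqref{rightmult2}, expanding $(x\cdot y)\ast\alpha(z)$ and $(x\ast z)\cdot\alpha(y)$ each yields two decomposable tensors, matched termwise using \eqref{rightmult2} in $A_1$ and $A_2$ together with Lemma~\ref{lem:comm}. For \eqref{rightmult} for $\ast$, namely that $(x\ast y)\ast\alpha(z)$ is unchanged under $y\leftrightarrow z$: expanding gives four decomposable tensors; the two ``pure'' ones (both $\ast$-operations acting in the same slot) are each $y\leftrightarrow z$-invariant by \eqref{rightmult} in the relevant $A_i$ and Lemma~\ref{lem:comm} in the other; the two ``mixed'' ones are interchanged by $y\leftrightarrow z$ once one rewrites their two slots via Lemma~\ref{lem:rightmult} inside each $A_i$. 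Hence the sum is invariant.

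The substantive point is the two left-symmetry conditions. For \eqref{leftsymmetric} for $\ast$, expand $as_\ast(x,y,z) = (x\ast y)\ast\alpha(z) - \alpha(x)\ast(y\ast z)$ into eight decomposable tensors. The type-$2$ and type-$0$ contributions collapse, using Hom-associativity of $\cdot_2$ (resp.\ $\cdot_1$) to identify the two ``$\cdot$-only'' slots, to $as_{\ast_1}(x_1,y_1,z_1)\otimes w_2$ and $w_1\otimes as_{\ast_2}(x_2,y_2,z_2)$, where $w_i = (x_i\cdot_i y_i)\cdot_i\alpha_i(z_i)$ is totally symmetric by Lemma~\ref{lem:comm}; these are $x\leftrightarrow y$-symmetric because each $as_{\ast_i}$ is left-symmetric. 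For the four type-$1$ tensors, applying Lemma~\ref{lem:rightmult} in each $A_i$ identifies the first slot of one term from $(x\ast y)\ast\alpha(z)$ with that of one term from $\alpha(x)\ast(y\ast z)$ (both equal $\alpha_1(x_1)\cdot_1(y_1\ast_1 z_1)$), and likewise for the second slots, so that the type-$1$ part of $as_\ast$ becomes
\[
as_{A_1}(x_1,y_1,z_1)\otimes\big(\alpha_2(x_2)\cdot_2(y_2\ast_2 z_2)\big) \;+\; \big(\alpha_1(x_1)\cdot_1(y_1\ast_1 z_1)\big)\otimes as_{A_2}(x_2,y_2,z_2),
\]
where $as_{A_i}$ is the mixed Hom-associator of $A_i$; here one uses that in Definition~\ref{def2:homassociator} the mixed associator of a Hom-Novikov-Poisson algebra has $\mu_1 = \cdot$ and $\mu_2 = \ast$, so that the relevant terms are indeed the mixed associators $as_{A_i}$. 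Subtracting the $x\leftrightarrow y$-swap and using that each $as_{A_i}$ is left-symmetric (this is \eqref{mixedass} for $A_i$), the residue is controlled by $\alpha_i(x_i)\cdot_i(y_i\ast_i z_i) - \alpha_i(y_i)\cdot_i(x_i\ast_i z_i)$, which is $0$ since both terms equal $(x_i\cdot_i y_i)\ast_i\alpha_i(z_i)$ by Lemma~\ref{lem:rightmult} and commutativity of $\cdot_i$. Therefore $as_\ast(x,y,z) = as_\ast(y,x,z)$.

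For \eqref{mixedass}, expand $as_A(x,y,z) = (x\ast y)\cdot\alpha(z) - \alpha(x)\ast(y\cdot z)$ into four decomposable tensors; there is no ``mixed'' type here, and Hom-associativity of $\cdot_1$ and $\cdot_2$ collapses them in pairs to $as_{A_1}(x_1,y_1,z_1)\otimes w_2$ and $w_1\otimes as_{A_2}(x_2,y_2,z_2)$ with the same totally symmetric $w_i$, which is manifestly $x\leftrightarrow y$-symmetric. This completes the verification, and multiplicativity follows by distributing $\alpha = \alpha_1\otimes\alpha_2$ through the defining formulas for $\cdot$ and $\ast$. I expect the only real obstacle to be the bookkeeping of the eight type-$1$ terms in \eqref{leftsymmetric}, and in particular recognizing the relevant terms of $\alpha(x)\ast(y\ast z)$ as mixed Hom-associators of the factor algebras; everything else is a routine slotwise reduction.
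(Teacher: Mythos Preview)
Your proposal is correct and follows essentially the same route as the paper: the paper likewise reduces to Lemmas~\ref{lem1:tensor} and~\ref{lem2:tensor}, expands the iterated products into four (respectively eight) decomposable tensors, and handles the type-$1$ terms in \eqref{leftsymmetric} by pairing them via Lemma~\ref{lem:rightmult} to obtain the mixed Hom-associators $as_{A_i}$ tensored with $(x_j\cdot_j y_j)\ast_j\alpha_j(z_j)$. The only cosmetic difference is that the paper writes these accompanying factors in the form $(x_j\cdot_j y_j)\ast_j\alpha_j(z_j)$, which is visibly symmetric in $x_j,y_j$, whereas you use the equivalent form $\alpha_j(x_j)\cdot_j(y_j\ast_j z_j)$ and then invoke Lemma~\ref{lem:rightmult} a second time to see the symmetry.
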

%%%%%%%%%%%%%%%%%%

\begin{proof}
It is easy to see that $(A,\cdot,\alpha)$ is a commutative Hom-associative algebra and that $A$ is multiplicative if both $A_i$ are.  We show that $(A,\ast,\alpha)$ is a Hom-Novikov algebra in Lemma \ref{lem1:tensor} below.  The compatibility conditions \eqref{hnp} for $A$ are proved in Lemma \ref{lem2:tensor} below.
\end{proof}

%%%%%%%%%%%%%%%%%%%%%
\begin{lemma}
\label{lem1:tensor}
Under the assumptions of Theorem \ref{thm:tensor}, $(A,\ast,\alpha)$ is a Hom-Novikov algebra.
\end{lemma}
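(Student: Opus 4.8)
The plan is to verify directly the two defining identities \eqref{homnovikov} for the product $\ast$ on $A = A_1 \otimes A_2$, namely left-symmetry of the Hom-associator $as_\ast$ and the right-multiplication identity \eqref{rightmult}, by expanding everything in terms of the four structure operations $\cdot_i$, $\ast_i$, $\alpha_i$ on the factors and reorganizing the resulting terms. First I would fix elements $x = \xtwo$, $y = \ytwo$, $z = \ztwo$ in $A$ and write out $(x \ast y) \ast \alpha(z)$ and $\alpha(x) \ast (y \ast z)$ using the definition of $\ast$; by bilinearity each is a sum of four tensor-factor terms indexed by the choices of where the two ``$\ast$-slots'' land among the two factors. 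Grouping these terms according to that index, the Hom-associator $as_\ast(x,y,z)$ decomposes as a sum of contributions, each of which is a tensor of one of the following shapes on the two factors: a pure $\ast_i$-Hom-associator $as_{\ast_i}$ tensored with an iterated $\cdot$-product, a mixed Hom-associator $as_{A_i}$ of Definition \ref{def2:homassociator} tensored with an iterated product, or a ``crossed'' term of the form $(\text{something}) \otimes (\text{something})$ where one factor involves $\ast$ applied after $\cdot$ and the other the reverse. Swapping $x_1 \leftrightarrow y_1$ and $x_2 \leftrightarrow y_2$ simultaneously corresponds to swapping the first two arguments of $as_\ast$, and I would check that each grouped contribution is sent to the corresponding contribution of $as_\ast(y,x,z)$: the pure terms by \eqref{leftsymmetric} for each $\ast_i$, the mixed terms by \eqref{mixedass} for each $A_i$, and the crossed terms by a combination of commutativity of $\cdot_i$, Lemma \ref{lem:rightmult} (the identity $(u \cdot v) \ast \alpha(w) = \alpha(u) \cdot (v \ast w)$), and Lemma \ref{lem:comm}.

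For the identity \eqref{rightmult}, $(x \ast y) \ast \alpha(z) = (x \ast z) \ast \alpha(y)$, I would again expand both sides into four tensor-factor terms each. After expansion, the terms with both $\ast$-slots in the same factor $i$ reduce to $as_{\ast_i}$-free identities that follow from \eqref{rightmult} for $\ast_i$ tensored with the commutative-associative identity $(u \cdot v) \cdot \alpha(w)$ being permutation-invariant (Lemma \ref{lem:comm}). The cross terms, where the two $\ast$-slots land in different factors, must be matched between the two sides using the compatibility \eqref{rightmult2} — concretely, a term like $(x_1 \ast_1 y_1) \cdot_1 \alpha_1(z_1)$ on the first factor paired with $(x_2 \cdot_2 y_2) \ast_2 \alpha_2(z_2)$ on the second factor: applying \eqref{rightmult} on the first factor turns $(x_1 \ast_1 y_1)\cdot_1\alpha_1(z_1)$ into something symmetric enough, while \eqref{rightmult2}/\eqref{rightmult'} on the second factor rewrites $(x_2 \cdot_2 y_2) \ast_2 \alpha_2(z_2)$ as $(x_2 \ast_2 z_2) \cdot_2 \alpha_2(y_2)$, and the bookkeeping shows the full cross-term sum is invariant under $y \leftrightarrow z$.

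The main obstacle will be the bookkeeping in the first identity: the left-symmetry of $as_\ast$ is not a termwise consequence of the left-symmetries on the factors, because the definition of $\ast$ mixes $\ast_i$ and $\cdot_i$, so genuinely ``crossed'' terms appear that only cancel or pair up after invoking Lemma \ref{lem:rightmult} and the permutation invariance of Lemma \ref{lem:comm}. I expect the cleanest write-up groups the eight expanded terms of $as_\ast(x,y,z)$ into three packets — ``$(\ast_1,\ast_1)$-type plus $(\ast_2,\ast_2)$-type'', ``$(\ast_1,\cdot_2)$-to-$(\cdot_1,\ast_2)$ mixed-associator type'', and ``pure crossed type'' — and checks invariance of each packet separately, so that no single monstrous chain of equalities is needed. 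Multiplicativity of $\ast$ when both $A_i$ are multiplicative is immediate from $\alpha_i \ast_i = \ast_i \alpha_i^{\otimes 2}$ and $\alpha_i \cdot_i = \cdot_i \alpha_i^{\otimes 2}$ applied factorwise, and commutativity-related identities on the $\cdot$ side are already handled by Theorem \ref{thm:tensor}'s reduction to this lemma.
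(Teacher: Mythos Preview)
Your proposal is correct and follows essentially the same route as the paper: expand $(x\ast y)\ast\alpha(z)$ and $\alpha(x)\ast(y\ast z)$ into four tensor terms each, then pair them so that each difference is a (pure or mixed) Hom-associator on one factor tensored with an iterated $\cdot$-product on the other, and handle \eqref{rightmult} by showing the two ``same-factor'' terms are individually $y\leftrightarrow z$ symmetric while the two cross terms swap under \eqref{rightmult2}. The only refinement the paper makes over your outline is that, once you use Hom-associativity and Lemma~\ref{lem:rightmult} to cross-pair the middle terms ($b$ with $c'$ and $c$ with $b'$ rather than $b$ with $b'$), your anticipated third ``pure crossed type'' packet disappears entirely --- every piece of $as_\ast$ is already a factorwise Hom-associator times something symmetric, so no residual bookkeeping is needed.
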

%%%%%%%%%%%%%%%%%%%%%

\begin{proof}
To improve readability, we omit the subscripts in $\ast_i$ and $\alpha_i$ and write $x \cdot_i y$ as $xy$.  Let us first prove \eqref{rightmult}.  Pick $x = \xtwo$, $y = \ytwo$, and $z = \ztwo$ in $A$.  We must show that $(x \ast y) \ast \alpha(z)$ is symmetric in $y$ and $z$.  We have:
\begin{equation}
\label{xyzast}
\begin{split}
(x \ast y) \ast \alpha(z)
&= \{x_1\ast y_1 \otimes x_2y_2 + x_1y_1 \otimes x_2\ast y_2\} \ast (\alpha(z_1) \otimes \alpha(z_2))\\
&= \underbrace{(x_1\ast y_1)\ast \alpha(z_1) \otimes (x_2y_2)\alpha(z_2)}_{a}
+ \underbrace{(x_1\ast y_1)\alpha(z_1) \otimes (x_2y_2)\ast \alpha(z_2)}_{b(x,y,z)}\\
&\relphantom{} + \underbrace{(x_1y_1)\ast \alpha(z_1) \otimes (x_2\ast y_2)\alpha(z_2)}_{c(x,y,z)}
+ \underbrace{(x_1y_1)\alpha(z_1) \otimes (x_2\ast y_2)\ast \alpha(z_2)}_{d}.
\end{split}
\end{equation}
The terms $a$ and $d$ are symmetric in $y$ and $z$ by \eqref{rightmult} in both $A_i$ and Lemma \ref{lem:comm}.  Moreover, we have
\[
\begin{split}
b(x,y,z) &= (x_1 z_1)\ast \alpha(y_1) \otimes (x_2 \ast z_2)\alpha(y_2)\quad\text{(by \eqref{rightmult2})}\\
&= c(x,z,y).
\end{split}
\]
This shows that $(x \ast y) \ast \alpha(z)$ is symmetric in $y$ and $z$, so \eqref{rightmult} holds in $A$.

For \eqref{leftsymmetric} in $A$, we must show that the Hom-associator
\[
as_*(x,y,z) = (x \ast y) \ast \alpha(z) - \alpha(x) \ast (y \ast z)
\]
with respect to $*$ in $A$ is symmetric in $x$ and $y$.  Let us compute the second term in the Hom-associator:
\begin{equation}
\label{xyzast2}
\begin{split}
\alpha(x) \ast (y \ast z)
&= (\alpha(x_1) \otimes \alpha(x_2))\ast \{y_1 \ast z_1 \otimes y_2z_2 + y_1z_1 \otimes y_2\ast z_2\}\\
&= \underbrace{\alpha(x_1) \ast (y_1 \ast z_1) \otimes \alpha(x_2)(y_2z_2)}_{a'}
+ \underbrace{\alpha(x_1)(y_1 \ast z_1) \otimes \alpha(x_2) \ast (y_2z_2)}_{b'}\\
&\relphantom{} + \underbrace{\alpha(x_1) \ast (y_1z_1) \otimes \alpha(x_2)(y_2\ast z_2)}_{c'}
+ \underbrace{\alpha(x_1)(y_1z_1) \otimes \alpha(x_2) \ast(y_2 \ast z_2)}_{d'}
\end{split}
\end{equation}
Using the notations in \eqref{xyzast} (with $b = b(x,y,z)$ and $c = c(x,y,z)$), Hom-associativity, and Lemma \ref{lem:rightmult}, we have:
\begin{equation}
\label{aa}
\begin{split}
a - a' &= as_{*_1}(x_1,y_1,z_1) \otimes (x_2y_2)\alpha(z_2),\\
d - d' &= (x_1y_1)\alpha(z_1) \otimes as_{*_2}(x_2,y_2,z_2),\\
c - b' &= (x_1y_1) \ast \alpha(z_1) \otimes as_{A_2}(x_2,y_2,z_2),\\
b - c' &= as_{A_1}(x_1,y_1,z_1) \otimes (x_2y_2) \ast \alpha(z_2).
\end{split}
\end{equation}
Here $as_{*_i}$ is the Hom-associator with respect to $\ast_i$, and $as_{A_i}$ is the mixed Hom-associator in $A_i$ (Definition \ref{def2:homassociator}).  By the commutativity of $\cdot_i$, \eqref{leftsymmetric}, and \eqref{mixedass} in $A_i$, it follows that $(a-a')$, $(d-d')$, $(c-b')$, and $(b-c')$ are all symmetric in $x$ and $y$.  Therefore, we conclude from \eqref{xyzast}, \eqref{xyzast2}, and \eqref{aa} that $as_*$ in $A$ is also symmetric in $x$ and $y$, thereby proving \eqref{leftsymmetric} in $A$.
\end{proof}

%%%%%%%%%%%%%%%%%%%%%
\begin{lemma}
\label{lem2:tensor}
Under the assumptions of Theorem \ref{thm:tensor},  $(A,\cdot,\ast,\alpha)$ satisfies the compatibility conditions \eqref{hnp}.
\end{lemma}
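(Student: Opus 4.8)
The plan is to verify the two halves of \eqref{hnp} for $A$ separately, in each case expanding on decomposable tensors $x = \xtwo$, $y = \ytwo$, $z = \ztwo$ and collapsing everything onto identities already available in $A_1$ and $A_2$, in the spirit of the proof of Lemma \ref{lem1:tensor}. As there, I would suppress the subscripts on $\ast_i$ and $\alpha_i$ and abbreviate $u \cdot_i v$ by juxtaposition $uv$.

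For the right-multiplication compatibility \eqref{rightmult2}, expand
\[
(x \cdot y) \ast \alpha(z) = \bigl((x_1y_1)\ast\alpha(z_1)\bigr)\otimes\bigl((x_2y_2)\alpha(z_2)\bigr) + \bigl((x_1y_1)\alpha(z_1)\bigr)\otimes\bigl((x_2y_2)\ast\alpha(z_2)\bigr).
\]
Applying \eqref{rightmult2} in $A_1$ and $A_2$ to the $\ast$-factor of each summand and Lemma \ref{lem:comm} to the $\cdot$-factor turns the two summands into $\bigl((x_1\ast z_1)\alpha(y_1)\bigr)\otimes\bigl((x_2z_2)\alpha(y_2)\bigr)$ and $\bigl((x_1z_1)\alpha(y_1)\bigr)\otimes\bigl((x_2\ast z_2)\alpha(y_2)\bigr)$. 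Expanding $(x \ast z)\cdot\alpha(y)$ directly produces exactly those same two tensors, so \eqref{rightmult2} holds in $A$.

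For the mixed left-symmetry \eqref{mixedass}, write the mixed Hom-associator of $A$ as a difference of four decomposable terms,
\[
(x \ast y)\cdot\alpha(z) = \underbrace{\bigl((x_1\ast y_1)\alpha(z_1)\bigr)\otimes\bigl((x_2y_2)\alpha(z_2)\bigr)}_{P} + \underbrace{\bigl((x_1y_1)\alpha(z_1)\bigr)\otimes\bigl((x_2\ast y_2)\alpha(z_2)\bigr)}_{Q},
\]
\[
\alpha(x)\ast(y\cdot z) = \underbrace{\bigl(\alpha(x_1)\ast(y_1z_1)\bigr)\otimes\bigl(\alpha(x_2)(y_2z_2)\bigr)}_{R} + \underbrace{\bigl(\alpha(x_1)(y_1z_1)\bigr)\otimes\bigl(\alpha(x_2)\ast(y_2z_2)\bigr)}_{S},
\]
so that $as_A(x,y,z) = (P - R) + (Q - S)$. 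Since $(x_2y_2)\alpha(z_2) = \alpha(x_2)(y_2z_2)$ and $(x_1y_1)\alpha(z_1) = \alpha(x_1)(y_1z_1)$ by the Hom-associativity of $\cdot_2$ and $\cdot_1$ (Lemma \ref{lem:comm}), these differences simplify to
\[
P - R = as_{A_1}(x_1,y_1,z_1)\otimes(x_2y_2)\alpha(z_2), \qquad Q - S = (x_1y_1)\alpha(z_1)\otimes as_{A_2}(x_2,y_2,z_2),
\]
where $as_{A_i}$ is the mixed Hom-associator of $A_i$. By \eqref{mixedass} in each $A_i$, $as_{A_i}(x_i,y_i,z_i)$ is symmetric in its first two variables, and by Lemma \ref{lem:comm} the surviving $\cdot$-factor is symmetric in $x$ and $y$; hence $P - R$, $Q - S$, and therefore $as_A(x,y,z)$, are symmetric in $x$ and $y$. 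This establishes \eqref{mixedass} for $A$ and, together with the previous paragraph, completes the proof of Theorem \ref{thm:tensor}.

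No single step here is difficult; the only delicate point is the bookkeeping — keeping track of which tensor factor of each term carries the $\ast$-product, and pairing the four terms of the mixed-associator computation in the order that yields honest $A_i$-mixed-associators rather than a tangle of mismatched $\cdot$- and $\ast$-factors, exactly as \eqref{aa} does in the proof of Lemma \ref{lem1:tensor}.
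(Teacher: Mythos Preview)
Your proof is correct and follows essentially the same route as the paper's: the paper labels the four terms $u,v,u',v'$ where you write $P,Q,R,S$, pairs them the same way to produce $as_{A_i}$ tensored with a symmetric $\cdot$-factor, and for \eqref{rightmult2} it starts from $(x\ast y)\cdot\alpha(z)$ rather than $(x\cdot y)\ast\alpha(z)$, which is the same identity read in the other direction.
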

%%%%%%%%%%%%%%%%%%%%%

\begin{proof}
Let us first prove \eqref{rightmult2} in $A$.  Using Lemma \ref{lem:comm}, the notations in Lemma \ref{lem1:tensor}, and \eqref{rightmult2} in $A_i$, we have:
\begin{equation}
\label{xyz2}
\begin{split}
(x * y) \cdot \alpha(z)
&= (x_1 * y_1 \otimes x_2y_2 + x_1y_1 \otimes x_2 * y_2) \cdot (\alpha(z_1) \otimes \alpha(z_2))\\
&= \underbrace{(x_1*y_1)\alpha(z_1) \otimes (x_2y_2)\alpha(z_2)}_{u}
+ \underbrace{(x_1y_1)\alpha(z_1) \otimes (x_2*y_2)\alpha(z_2)}_{v}\\
&= (x_1z_1)*\alpha(y_1) \otimes (x_2z_2)\alpha(y_2) + (x_1z_1)\alpha(y_1) \otimes (x_2z_2)*\alpha(y_2)\\
&= (x_1z_1 \otimes x_2z_2) * (\alpha(y_1) \otimes \alpha(y_2))\\
&= (x \cdot z) * \alpha(y).
\end{split}
\end{equation}
This proves \eqref{rightmult2} in $A$.  The labels $u$ and $v$ will be used in the next paragraph.

For \eqref{mixedass} in $A$, we must show that the mixed Hom-associator
\[
as_A(x,y,z) = (x*y) \cdot \alpha(z) - \alpha(x) * (y \cdot z)
\]
is symmetric in $x$ and $y$.  Let us compute the second term in the mixed Hom-associator:
\begin{equation}
\label{xyz3}
\begin{split}
\alpha(x) * (y \cdot z)
&= (\alpha(x_1) \otimes \alpha(x_2)) * (y_1z_1 \otimes y_2z_2)\\
&= \underbrace{\alpha(x_1)*(y_1z_1) \otimes \alpha(x_2)(y_2z_2)}_{u'}
+ \underbrace{\alpha(x_1)(y_1z_1) \otimes \alpha(x_2)*(y_2z_2)}_{v'}.
\end{split}
\end{equation}
Using Hom-associativity, we have:
\[
\begin{split}
u - u' &= as_{A_1}(x_1,y_1,z_1) \otimes (x_2y_2)\alpha(z_2),\\
v - v' &= (x_1y_1)\alpha(z_1) \otimes as_{A_2}(x_2,y_2,z_2).
\end{split}
\]
It follows from the commutativity of $\cdot_i$ and \eqref{mixedass} in $A_i$ that both $(u-u')$ and $(v-v')$ are symmetric in $x$ and $y$.  Therefore, the mixed Hom-associator
\[
as_A(x,y,z) = (u-u') + (v-v')
\]
is also symmetric in $x$ and $y$, thereby proving \eqref{mixedass} in $A$.
\end{proof}

Taking $\alpha_i = Id_{A_i}$ in Theorem \ref{thm:tensor}, we recover the following result, which is Theorem 4.1 in \cite{xu1}.

%%%%%%%%%%%%%%%%%%
\begin{corollary}
\label{cor1:tensor}
Let $(A_i,\cdot_i,\ast_i)$ be Novikov-Poisson algebras for $i = 1,2$, and let $A = A_1 \otimes A_2$.  Define the operations $\cdot, \ast \colon A^{\otimes 2} \to A$ by:
\[
\begin{split}
(\xtwo) \cdot (\ytwo) &= (x_1 \cdot_1 y_1) \otimes (x_2 \cdot_2 y_2),\\
(\xtwo) \ast (\ytwo) &= (x_1 \ast_1 y_1) \otimes (x_2 \cdot_2 y_2) + (x_1 \cdot_1 y_1) \otimes (x_2 \ast_2 y_2)
\end{split}
\]
for $x_i,y_i \in A_i$.  Then $(A,\cdot,\ast)$ is a Novikov-Poisson algebra.
\end{corollary}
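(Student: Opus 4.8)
The plan is to obtain this as the $\alpha_i = Id$ specialization of Theorem \ref{thm:tensor}, so that essentially no new computation is needed. First I would recall that, by the definition given immediately after Definition \ref{def:hnp}, a Novikov-Poisson algebra $(A_i,\cdot_i,\ast_i)$ is by definition the same thing as a multiplicative Hom-Novikov-Poisson algebra $(A_i,\cdot_i,\ast_i,Id_{A_i})$. Thus the two given Novikov-Poisson algebras furnish multiplicative Hom-Novikov-Poisson algebras to which Theorem \ref{thm:tensor} applies, with $\alpha_1 = Id_{A_1}$ and $\alpha_2 = Id_{A_2}$.

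Next I would invoke Theorem \ref{thm:tensor}: it yields that $(A,\cdot,\ast,\alpha)$ is a multiplicative Hom-Novikov-Poisson algebra, where $\cdot$ and $\ast$ are exactly the operations in the statement and the twisting map is $\alpha = \alpha_1 \otimes \alpha_2$. Since $\alpha_1 \otimes \alpha_2 = Id_{A_1} \otimes Id_{A_2}$ sends a decomposable tensor $\xtwo$ to $(Id_{A_1} \otimes Id_{A_2})(\xtwo) = \xtwo$, we have $\alpha = Id_A$. A multiplicative Hom-Novikov-Poisson algebra whose twisting map is the identity is, once again by the definition of a Novikov-Poisson algebra, precisely a Novikov-Poisson algebra; hence $(A,\cdot,\ast)$ is a Novikov-Poisson algebra, as desired.

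There is no genuine obstacle here: the only points to check are that the tensor product of the two identity maps is the identity on $A_1 \otimes A_2$ (immediate) and that multiplicativity of the $A_i$ transfers to $A$ (already part of the conclusion of Theorem \ref{thm:tensor}, and in any event automatic when the twisting map is the identity). All the real content of the corollary is contained in Theorem \ref{thm:tensor} together with its supporting Lemmas \ref{lem1:tensor} and \ref{lem2:tensor}.
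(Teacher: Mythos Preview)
Your proposal is correct and matches the paper's approach exactly: the paper obtains this corollary simply by taking $\alpha_i = Id_{A_i}$ in Theorem \ref{thm:tensor}, which is precisely what you do.
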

%%%%%%%%%%%%%%%%%%

Theorem \ref{thm:tensor} can be used with the results and examples in the previous section to construct new Hom-Novikov-Poisson algebras. For example, the next result is obtained by first using Corollary \ref{cor1:twist} and then Theorem \ref{thm:tensor}.

%%%%%%%%%%%%%%%%%%%%%
\begin{corollary}
\label{cor2:tensor}
Let $(A_i,\cdot_i,\ast_i,\alpha_i)$ be multiplicative Hom-Novikov-Poisson algebras for $i = 1,2$, and let $A = A_1 \otimes A_2$.  For integers $n,m \geq 0$, define the operations $\alpha \colon A \to A$ and $\cdot, \ast \colon A^{\otimes 2} \to A$ by:
\[
\begin{split}
\alpha &= \alpha_1^{n+1} \otimes \alpha_2^{m+1},\\
(\xtwo) \cdot (\ytwo) &= \alpha_1^{n}(x_1 \cdot_1 y_1) \otimes \alpha_2^{m}(x_2 \cdot_2 y_2),\\
(\xtwo) \ast (\ytwo) &= \alpha_1^{n}(x_1 \ast_1 y_1) \otimes \alpha_2^{m}(x_2 \cdot_2 y_2) + \alpha_1^{n}(x_1 \cdot_1 y_1) \otimes \alpha_2^{m}(x_2 \ast_2 y_2)
\end{split}
\]
for $x_i,y_i \in A_i$.  Then $(A,\cdot,\ast,\alpha)$ is a multiplicative Hom-Novikov-Poisson algebra.
\end{corollary}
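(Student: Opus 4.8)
The plan is to obtain this corollary by composing two constructions already in hand: first twist each factor $A_i$ by a suitable power of its own twisting map via Corollary~\ref{cor1:twist}, and then form the tensor product of the two resulting Hom-Novikov-Poisson algebras via Theorem~\ref{thm:tensor}.

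First I would apply Corollary~\ref{cor1:twist} to $A_1$ with the integer $n$ and, separately, to $A_2$ with the integer $m$. This produces multiplicative Hom-Novikov-Poisson algebras
\[
A_1^n = (A_1,\, \alpha_1^n\cdot_1,\, \alpha_1^n\ast_1,\, \alpha_1^{n+1})
\andspace
A_2^m = (A_2,\, \alpha_2^m\cdot_2,\, \alpha_2^m\ast_2,\, \alpha_2^{m+1}).
\]
Since both $A_1^n$ and $A_2^m$ are multiplicative, Theorem~\ref{thm:tensor} applies to the pair $(A_1^n, A_2^m)$ and yields a multiplicative Hom-Novikov-Poisson algebra structure on $A_1 \otimes A_2$. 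Reading off the structure maps from that theorem, with $\cdot_1$ there replaced by $\alpha_1^n\cdot_1$, with $\ast_1$ replaced by $\alpha_1^n\ast_1$, and likewise with the exponent $m$ on the second factor, the twisting map is $\alpha_1^{n+1} \otimes \alpha_2^{m+1}$ and the two products are
\[
(\xtwo) \cdot (\ytwo) = \alpha_1^n(x_1 \cdot_1 y_1) \otimes \alpha_2^m(x_2 \cdot_2 y_2)
\]
and
\[
(\xtwo) \ast (\ytwo) = \alpha_1^n(x_1 \ast_1 y_1) \otimes \alpha_2^m(x_2 \cdot_2 y_2) + \alpha_1^n(x_1 \cdot_1 y_1) \otimes \alpha_2^m(x_2 \ast_2 y_2)
\]
for $x_i, y_i \in A_i$.

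The last step is purely a matter of notation: these formulas coincide verbatim with the maps $\alpha$, $\cdot$, and $\ast$ in the statement of the corollary, so $(A,\cdot,\ast,\alpha)$ is exactly $A_1^n \otimes A_2^m$ and is therefore a multiplicative Hom-Novikov-Poisson algebra. I do not expect any genuine obstacle here; the only points calling for a little care are to keep the exponent $n$ attached to the first factor and $m$ attached to the second when invoking Corollary~\ref{cor1:twist} separately on each $A_i$, and to observe that in Theorem~\ref{thm:tensor} it is the \emph{same} power $\alpha_i^n$ (respectively $\alpha_i^m$) that appears on both summands of the twisted product $\ast$, which is precisely what makes the displayed formulas line up with those in the corollary.
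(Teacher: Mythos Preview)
Your proposal is correct and is essentially the same as the paper's own proof: the paper also observes that $(A,\cdot,\ast,\alpha) = A_1^n \otimes A_2^m$, obtained by first applying Corollary~\ref{cor1:twist} to each factor and then taking the tensor product via Theorem~\ref{thm:tensor}.
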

%%%%%%%%%%%%%%%%%%%%%

\begin{proof}
Indeed, using the notations in Corollary \ref{cor1:twist}, we have
\[
(A,\cdot,\ast,\alpha) = A_1^n \otimes A_2^m,
\]
where the tensor product is equipped with the Hom-Novikov-Poisson algebra structure in Theorem \ref{thm:tensor}.
\end{proof}

Observe that the tensor product in Theorem \ref{thm:tensor} and the operation $(-)^n$ in Corollary \ref{cor1:twist} commute.  In other words, given any two multiplicative Hom-Novikov-Poisson algebras $A_1$ and $A_2$, we have
\[
(A_1 \otimes A_2)^n = A_1^n \otimes A_2^n
\]
for all $n \geq 0$.

The following result is obtained by using Corollary \ref{cor2:twist} and then Theorem \ref{thm:tensor}.

%%%%%%%%%%%%%%%%%%%
\begin{corollary}
\label{cor3:tensor}
Let $(A_i,\cdot_i,*_i)$ be Novikov-Poisson algebras and $\beta_i \colon A_i \to A_i$ be morphisms for $i=1,2$.  Let $A = A_1 \otimes A_2$.  Define the operations $\beta \colon A \to A$ and $\cdot, \ast \colon A^{\otimes 2} \to A$ by:
\[
\begin{split}
\beta &= \beta_1 \otimes \beta_2,\\
(\xtwo) \cdot (\ytwo) &= \beta_1(x_1 \cdot_1 y_1) \otimes \beta_2(x_2 \cdot_2 y_2),\\
(\xtwo) \ast (\ytwo) &= \beta_1(x_1 \ast_1 y_1) \otimes \beta_2(x_2 \cdot_2 y_2) + \beta_1(x_1 \cdot_1 y_1) \otimes \beta_2(x_2 \ast_2 y_2)
\end{split}
\]
for $x_i,y_i \in A_i$.  Then $(A,\cdot,\ast,\alpha)$ is a multiplicative Hom-Novikov-Poisson algebra.
\end{corollary}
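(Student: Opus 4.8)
The plan is to obtain this corollary with no new computation, by composing the two construction results already in hand: Corollary~\ref{cor2:twist} (which turns a Novikov-Poisson algebra and a self-morphism into a multiplicative Hom-Novikov-Poisson algebra) and Theorem~\ref{thm:tensor} (tensor products of Hom-Novikov-Poisson algebras). Concretely, I claim that $(A,\cdot,\ast,\beta)$ with $\beta = \beta_1\otimes\beta_2$ is precisely $(A_1)_{\beta_1}\otimes(A_2)_{\beta_2}$, equipped with the tensor-product structure of Theorem~\ref{thm:tensor}.

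First I would apply Corollary~\ref{cor2:twist} to each pair $\bigl((A_i,\cdot_i,\ast_i),\beta_i\bigr)$, which is legitimate since each $\beta_i$ is a morphism. This produces a multiplicative Hom-Novikov-Poisson algebra
\[
(A_i)_{\beta_i} = (A_i,\beta_i\cdot_i,\beta_i\ast_i,\beta_i), \qquad i = 1,2,
\]
where $\beta_i\cdot_i$ and $\beta_i\ast_i$ denote the composites $\beta_i\circ\cdot_i$ and $\beta_i\circ\ast_i$. Next I would feed $(A_1)_{\beta_1}$ and $(A_2)_{\beta_2}$ into Theorem~\ref{thm:tensor}. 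The resulting twisting map on $A = A_1\otimes A_2$ is $\beta_1\otimes\beta_2 = \beta$; the commutative Hom-associative product is
\[
(\xtwo)\cdot(\ytwo) = (\beta_1\cdot_1)(x_1,y_1)\otimes(\beta_2\cdot_2)(x_2,y_2) = \beta_1(x_1\cdot_1 y_1)\otimes\beta_2(x_2\cdot_2 y_2);
\]
and the Hom-Novikov product is
\[
(\xtwo)\ast(\ytwo) = (\beta_1\ast_1)(x_1,y_1)\otimes(\beta_2\cdot_2)(x_2,y_2) + (\beta_1\cdot_1)(x_1,y_1)\otimes(\beta_2\ast_2)(x_2,y_2),
\]
which equals $\beta_1(x_1\ast_1 y_1)\otimes\beta_2(x_2\cdot_2 y_2) + \beta_1(x_1\cdot_1 y_1)\otimes\beta_2(x_2\ast_2 y_2)$. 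These match the operations in the statement verbatim, so Theorem~\ref{thm:tensor} yields that $(A,\cdot,\ast,\beta)$ is a Hom-Novikov-Poisson algebra, and since both factors $(A_i)_{\beta_i}$ are multiplicative, its final clause gives multiplicativity of $(A,\cdot,\ast,\beta)$.

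There is essentially no obstacle here: the entire content is carried by Corollary~\ref{cor2:twist} and Theorem~\ref{thm:tensor}, and the only thing requiring attention is the purely notational check, carried out in the displays above, that $(\beta_i\cdot_i)(x_i,y_i) = \beta_i(x_i\cdot_i y_i)$ and similarly for $\ast_i$, so that the tensor-product operations of Theorem~\ref{thm:tensor} applied to the $(A_i)_{\beta_i}$ coincide with the operations named in the statement. One could also phrase the argument the other way around --- first tensor the $A_i$ as Novikov-Poisson algebras via Corollary~\ref{cor1:tensor}, then twist by $\beta_1\otimes\beta_2$ via Corollary~\ref{cor2:twist} --- but the order chosen above requires only results already proved for Hom-algebras and avoids checking that $\beta_1\otimes\beta_2$ is a morphism of the tensor Novikov-Poisson algebra.
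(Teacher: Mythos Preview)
Your proposal is correct and matches the paper's proof essentially line for line: the paper also observes that $(A,\cdot,\ast,\beta) = (A_1)_{\beta_1}\otimes(A_2)_{\beta_2}$ with the tensor-product structure of Theorem~\ref{thm:tensor}, invoking Corollary~\ref{cor2:twist} on each factor first. Your added verification that the operations coincide and the multiplicativity clause applies is exactly the content implicit in the paper's one-line identification.
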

%%%%%%%%%%%%%%%%%%%

\begin{proof}
Indeed, in the notations of Corollary \ref{cor2:twist}, we have
\[
(A,\cdot,\ast,\alpha) = (A_1)_{\beta_1} \otimes (A_2)_{\beta_2},
\]
where the tensor product is equipped with the Hom-Novikov-Poisson algebra structure in Theorem \ref{thm:tensor}.
\end{proof}

Observe that the tensor product in Theorem \ref{thm:tensor} and the operation $(-)_\beta$ in Corollary \ref{cor2:twist} commute.  In other words, given any two Novikov-Poisson algebras $A_i$ and morphisms $\beta_i \colon A_i \to A_i$ for $i=1,2$, we have
\[
(A_1 \otimes A_2)_{\beta_1\otimes\beta_2} = (A_1)_{\beta_1} \otimes (A_2)_{\beta_2},
\]
where both tensor products are equipped with the Hom-Novikov-Poisson algebra structures in Theorem \ref{thm:tensor}.

%%%%%%%%%%%%%%%%%%%%%%%%%%%%%%%%%%%%%%%%%%%%%%%%%%%%%%%%
\section{Perturbations of Hom-Novikov-Poisson algebras}
\label{sec:perturb}
%%%%%%%%%%%%%%%%%%%%%%%%%%%%%%%%%%%%%%%%%%%%%%%%%%%%%%%%

The purpose of this section is to show that certain perturbations preserve Hom-Novikov-Poisson algebra structures. We need the following preliminary observation about perturbing the structure maps in a commutative Hom-associative algebra.

%%%%%%%%%%%%%%%%%%%%
\begin{lemma}
\label{lem1:perturb}
Let $(A,\cdot,\alpha)$ be a commutative Hom-associative algebra and $a$ be an element in $A$.  Define the operation $\diamond \colon A^{\otimes 2} \to A$ by
\begin{equation}
\label{diamondop}
x \diamond y = a \cdot (x \cdot y)
\end{equation}
for $x,y \in A$.  Then
\[
A' = (A,\diamond,\alpha^2)
\]
is also a commutative Hom-associative algebra.  Moreover, if $A$ is multiplicative and $\alpha^2(a) = a$, then $A'$ is also multiplicative.
\end{lemma}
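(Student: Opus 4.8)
The plan is to verify the three assertions in turn, with essentially all of the work going into Hom-associativity for $\diamond$. Commutativity of $\diamond$ is immediate: $x \diamond y = a\cdot(x\cdot y) = a\cdot(y\cdot x) = y\diamond x$, using only the commutativity of $\cdot$.

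For Hom-associativity of $(A,\diamond,\alpha^2)$ I must show $(x\diamond y)\diamond\alpha^2(z) = \alpha^2(x)\diamond(y\diamond z)$ for all $x,y,z \in A$. First I would expand the left-hand side to $a\cdot\big((a\cdot(x\cdot y))\cdot\alpha^2(z)\big)$. The key move is to write $\alpha^2(z) = \alpha(\alpha(z))$ and apply the Hom-associativity of $\cdot$ (with the factorization $a$, $x\cdot y$, $\alpha(z)$) to the inner product, which yields $a\cdot\big(\alpha(a)\cdot((x\cdot y)\cdot\alpha(z))\big)$. Expanding the right-hand side to $a\cdot\big(\alpha^2(x)\cdot(a\cdot(y\cdot z))\big)$, then using commutativity of $\cdot$ to put $\alpha^2(x) = \alpha(\alpha(x))$ in the outer slot and Hom-associativity once more, I get $a\cdot\big(\alpha(a)\cdot((y\cdot z)\cdot\alpha(x))\big)$. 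By Lemma \ref{lem:comm} the triple product $(x\cdot y)\cdot\alpha(z)$ is invariant under every permutation of $x,y,z$, so $(x\cdot y)\cdot\alpha(z) = (y\cdot z)\cdot\alpha(x)$, and the two expressions coincide.

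The only real obstacle is the bookkeeping of where the various powers of $\alpha$ land; the trick that makes everything go through is to peel off the constant factor $a$ with a single application of Hom-associativity on each side, so that what remains is an \emph{unweighted} triple product of $x,y,z$ in $\cdot$, to which Lemma \ref{lem:comm} applies directly.

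Finally, for the multiplicativity claim, assume $A$ is multiplicative and $\alpha^2(a)=a$. Then
\[
\alpha^2(x\diamond y) = \alpha^2\big(a\cdot(x\cdot y)\big) = \alpha^2(a)\cdot\big(\alpha^2(x)\cdot\alpha^2(y)\big) = a\cdot\big(\alpha^2(x)\cdot\alpha^2(y)\big) = \alpha^2(x)\diamond\alpha^2(y),
\]
where the second equality uses the multiplicativity of $\cdot$ twice and the third uses $\alpha^2(a)=a$. This step is a routine direct computation with no obstacle.
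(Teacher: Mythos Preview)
Your proof is correct and follows essentially the same approach as the paper: peel off the constant $a$ on each side via one application of Hom-associativity to expose $\alpha(a)$ against an unweighted triple product, then invoke Lemma~\ref{lem:comm} to match the remaining triple products. The paper presents this as a single chain from $(x\diamond y)\diamond\alpha^2(z)$ to $\alpha^2(x)\diamond(y\diamond z)$, whereas you reduce both sides to the common form $a\cdot\big(\alpha(a)\cdot((x\cdot y)\cdot\alpha(z))\big)$, but the substance is the same.
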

%%%%%%%%%%%%%%%%%%%%

\begin{proof}
As usual we abbreviate $x \cdot y$ to $xy$.  The commutativity of $\diamond$ follows from that of $\cdot$.  The multiplicativity assertion is straightforward to check.  To show that $A'$ is Hom-associative, pick $x,y,z \in A$.  Then we have:
\[
\begin{split}
(x \diamond y) \diamond \alpha^2(z)
&= a\{(a(xy))\alpha^2(z)\}\\
&= a\{\alpha(a)((xy)\alpha(z))\}\byhomass\\
&= a\{\alpha(a)(\alpha(x)(yz))\}\byhomass\\
&= a\{\alpha^2(x)(a(yz))\}\quad\text{(by Lemma \ref{lem:comm})}\\
&= \alpha^2(x) \diamond (y \diamond z).
\end{split}
\]
This shows that $A'$ is Hom-associative.
\end{proof}

The following result says that the structure maps of a multiplicative Hom-Novikov-Poisson algebra can be perturbed by a suitable element and its own twisting map.

%%%%%%%%%%%%%%%%%%%%
\begin{theorem}
\label{thm1:perturb}
Let $(A,\cdot,*,\alpha)$ be a multiplicative Hom-Novikov-Poisson algebra and $a \in A$ be an element such that $\alpha^2(a) = a$.  Then
\[
A' = (A,\diamond,*_\alpha,\alpha^2)
\]
is also a multiplicative Hom-Novikov-Poisson algebra, where
\[
\begin{split}
x \diamond y &= a \cdot (x \cdot y),\\
x *_\alpha y &= \alpha(x * y) = \alpha(x) * \alpha(y)
\end{split}
\]
for all $x,y \in A$.
\end{theorem}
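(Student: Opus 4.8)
The plan is to verify separately the four things that make $A'$ a multiplicative Hom-Novikov-Poisson algebra: (i) $(A,\diamond,\alpha^2)$ is a commutative Hom-associative algebra; (ii) $(A,\ast_\alpha,\alpha^2)$ is a Hom-Novikov algebra; (iii) the two compatibility conditions \eqref{hnp} hold for the pair $(\diamond,\ast_\alpha)$ with twisting map $\alpha^2$; and (iv) multiplicativity. Item (i) is precisely Lemma~\ref{lem1:perturb}, which moreover yields multiplicativity of $\diamond$ with respect to $\alpha^2$ because $\alpha^2(a)=a$. For item (ii), observe that $\ast_\alpha=\alpha\ast$ and that $(A,\ast,\alpha)$ is a multiplicative Hom-Novikov algebra; hence the $n=1$ case of Corollary~\ref{cor1.5:twist} shows $(A,\ast_\alpha,\alpha^2)$ is a multiplicative Hom-Novikov algebra, settling (ii) and the remaining part of (iv). So the proof reduces to the compatibility conditions in (iii).

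For \eqref{rightmult2} in $A'$, a short direct computation suffices. Using $u\ast_\alpha v=\alpha(u\ast v)$, then \eqref{rightmult''} of Lemma~\ref{lem:rightmult} (applied with first slot $a$) together with multiplicativity and $\alpha^2(a)=a$, and finally \eqref{rightmult'} of the same lemma, one gets
\[
(x\diamond y)\ast_\alpha\alpha^2(z)=\alpha\bigl((a\cdot(x\cdot y))\ast\alpha^2(z)\bigr)=a\cdot\alpha\bigl((x\cdot y)\ast\alpha(z)\bigr)=a\cdot\bigl((x\ast_\alpha z)\cdot\alpha^2(y)\bigr)=(x\ast_\alpha z)\diamond\alpha^2(y),
\]
which is exactly \eqref{rightmult2} for $A'$.

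The bulk of the work is \eqref{mixedass} in $A'$, that is, showing that the mixed Hom-associator $as_{A'}(x,y,z)=(x\ast_\alpha y)\diamond\alpha^2(z)-\alpha^2(x)\ast_\alpha(y\diamond z)$ is symmetric in $x$ and $y$. I would expand each of the two terms, using the identity \eqref{mixed} (that is, $as_A(p,q,r)=(p\ast q)\cdot\alpha(r)-\alpha(p)\ast(q\cdot r)$, where $as_A$ is the mixed Hom-associator of $A$) to trade a product for $\alpha(p)\ast(q\cdot r)$ plus an $as_A$-term, moving outer copies of $\alpha$ inside via multiplicativity (note $as_A$ is multiplicative too), and rewriting triple $\cdot$-products by Lemma~\ref{lem:comm} so that $\alpha^2(a)$ reduces to $a$. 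The leading products then cancel between the two terms, leaving
\[
as_{A'}(x,y,z)=\alpha\bigl(as_A(x,y,z)\bigr)\cdot a+\alpha\bigl(as_A(\alpha(x),\,y\cdot z,\,a)\bigr).
\]
The first summand is symmetric in $x,y$ by \eqref{mixed}. The second summand is handled by the auxiliary identity
\[
as_A(\alpha(x),\,y\cdot z,\,a)=as_A(x,y,\alpha(a))\cdot\alpha^2(z),
\]
which one proves by first using \eqref{mixed} to swap the first two slots and then expanding $as_A(y\cdot z,\alpha(x),a)$ and peeling off the $\cdot$-factors one at a time via the two forms \eqref{rightmult'} and \eqref{rightmult''} of Lemma~\ref{lem:rightmult}, Lemma~\ref{lem:comm}, multiplicativity, and $\alpha^2(a)=a$; all but one term cancels. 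Since $as_A(x,y,\alpha(a))$ is symmetric in $x,y$ by \eqref{mixed} and $\alpha^2(z)$ does not involve $x$ or $y$, the second summand is symmetric in $x,y$, so $as_{A'}(x,y,z)$ is symmetric in $x,y$, which finishes (iii).

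The main obstacle is the computation in the last paragraph, especially tracking which product terms cancel in the expansion of $as_{A'}$ and carrying out the chain of rewrites that produces the auxiliary identity; throughout one must use the hypothesis $\alpha^2(a)=a$ only through the two-step form $\alpha(\alpha(a))=a$, since a single $\alpha$ cannot be removed. All the other ingredients are either quoted (Lemma~\ref{lem1:perturb}, Corollary~\ref{cor1.5:twist}) or are one-line computations.
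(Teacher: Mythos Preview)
Your argument is correct, and for items (i), (ii), (iv) and for \eqref{rightmult2} it is essentially identical to the paper's. The only substantive difference is in the verification of \eqref{mixedass}. You decompose the mixed Hom-associator of $A'$ as
\[
as_{A'}(x,y,z)=\alpha\bigl(as_A(x,y,z)\bigr)\cdot a+\alpha\bigl(as_A(\alpha(x),\,y\cdot z,\,a)\bigr),
\]
and then prove the auxiliary identity $as_A(\alpha(x),y\cdot z,a)=as_A(x,y,\alpha(a))\cdot\alpha^2(z)$ to see that the second summand is symmetric in $x,y$. This works (the ``peeling'' you describe is exactly: apply left-symmetry, use \eqref{rightmult'} on $((y\cdot z)\ast\alpha(x))$ and \eqref{rightmult''} via $\alpha(x)\cdot a=\alpha(x\cdot\alpha(a))$ on the other term, then Lemma~\ref{lem:comm} to factor out $\alpha^2(z)$, and a final swap). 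The paper takes a shorter route: it rewrites each of the two terms of $as_{A'}(x,y,z)$ directly and obtains in one stroke
\[
as_{A'}(x,y,z)=as_A\bigl(\alpha^2(x),\alpha^2(y),\alpha(z)\cdot a\bigr),
\]
from which the left-symmetry is immediate. Your decomposition and auxiliary identity are in fact equivalent to this single reduction (combine your two summands using Hom-associativity and Lemma~\ref{lem:comm}), so the extra work buys nothing here; the paper's version is the streamlined form of your computation.
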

%%%%%%%%%%%%%%%%%%%%

\begin{proof}
By Lemma \ref{lem1:perturb} we know that $(A,\diamond,\alpha^2)$ is a multiplicative commutative Hom-associative algebra.  Also, $(A,*_\alpha,\alpha^2)$ is a multiplicative Hom-Novikov algebra by Corollary \ref{cor1.5:twist} (the $n=1$ case).  It remains to prove the compatibility conditions \eqref{hnp} for $A'$.

To prove the compatibility condition \eqref{rightmult2} (or equivalently \eqref{rightmult''}) for $A'$, pick $x,y,z \in A$.  With $x \cdot y$ written as $xy$, we have:
\[
\begin{split}
(x \diamond y) *_\alpha \alpha^2(z)
&= \{\alpha(a)\alpha(xy)\} * \alpha^3(z)\bymult\\
&= \alpha^2(a)\{\alpha(xy) * \alpha^2(z)\quad\text{(by \eqref{rightmult''} in $A$)}\\
&= a\{(\alpha(x)\alpha(y)) * \alpha^2(z)\}\bymult\\
&= a\{\alpha^2(x)(\alpha(y) * \alpha(z))\}\quad\text{(by \eqref{rightmult''} in $A$)}\\
&= \alpha^2(x) \diamond (y *_\alpha z).
\end{split}
\]
This proves that $A'$ satisfies \eqref{rightmult''}, which is equivalent to \eqref{rightmult2} by Lemma \ref{lem:rightmult}.

To prove the compatibility condition \eqref{mixedass} for $A'$, we must show that the mixed Hom-associator (Definition \ref{def2:homassociator})
\begin{equation}
\label{mha0}
as_{A'}(x,y,z) = (x *_\alpha y) \diamond \alpha^2(z) - \alpha^2(x) *_\alpha (y \diamond z)
\end{equation}
in $A'$ is symmetric in $x$ and $y$.  The first term in the mixed Hom-associator is:
\begin{equation}
\label{mha1}
\begin{split}
(x *_\alpha y) \diamond \alpha^2(z)
&= a \{(\alpha(x) * \alpha(y))\alpha^2(z)\}\\
&= \{(\alpha(x) * \alpha(y))\alpha^2(z)\}\alpha^2(a)\bycomm\\
&= \{\alpha^2(x) * \alpha^2(y)\} (\alpha^2(z)\alpha(a))\byhomass\\
&= \{\alpha^2(x) * \alpha^2(y)\} (\alpha(\alpha(z)a)).
\end{split}
\end{equation}
The second term in the mixed Hom-associator is:
\begin{equation}
\label{mha2}
\begin{split}
\alpha^2(x) *_\alpha (y \diamond z)
&= \alpha^3(x) * \alpha(a(yz))\\
&= \alpha^3(x) * \{\alpha(a)(\alpha(y)\alpha(z))\}\\
&= \alpha^3(x) * \{\alpha^2(y)(\alpha(z)a)\}\quad\text{(by Lemma \ref{lem:comm})}.
\end{split}
\end{equation}
Using \eqref{mha0}, \eqref{mha1}, and \eqref{mha2}, it follows that the mixed Hom-associators of $A'$ and $A$ are related as follows:
\[
\begin{split}
as_{A'}(x,y,z)
&= \{\alpha^2(x) * \alpha^2(y)\} (\alpha(\alpha(z)a)) - \alpha^3(x) * \{\alpha^2(y)(\alpha(z)a)\}\\
&= as_A(\alpha^2(x),\alpha^2(y),\alpha(z)a)
\end{split}
\]
Since $as_A$ is symmetric in its first two arguments \eqref{mixedass}, we conclude that $as_{A'}(x,y,z)$ is symmetric in $x$ and $y$.
\end{proof}

Setting $\alpha = Id_A$ in Theorem \ref{thm1:perturb}, we recover Lemma 2.4 in \cite{xu2}:

%%%%%%%%%%%%%%%%%%%%%
\begin{corollary}
\label{cor:perturb}
Let $(A,\cdot,*)$ be a Novikov-Poisson algebra and $a \in A$ be an arbitrary element.  Then $(A,\diamond,*)$ is also a Novikov-Poisson algebra, where
\[
x \diamond y = a \cdot x \cdot y
\]
for all $x,y \in A$.
\end{corollary}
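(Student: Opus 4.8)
The plan is to obtain this statement directly as the $\alpha = Id_A$ specialization of Theorem \ref{thm1:perturb}, as announced in the sentence immediately preceding the corollary. First I would recall that, by definition, a Novikov-Poisson algebra $(A,\cdot,*)$ is exactly a multiplicative Hom-Novikov-Poisson algebra $(A,\cdot,*,Id_A)$; in particular the multiplicativity hypothesis of Theorem \ref{thm1:perturb} is automatically satisfied in the present setting, since the condition $\alpha\mu_i = \mu_i\alpha^{\otimes 2}$ becomes trivial when $\alpha = Id_A$.

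Next I would observe that the hypothesis on the perturbing element becomes vacuous: with $\alpha = Id_A$ one has $\alpha^2 = Id_A$, so the requirement $\alpha^2(a) = a$ of Theorem \ref{thm1:perturb} holds for every $a \in A$, which is precisely why $a$ may be taken arbitrary here. Then I would unwind the conclusion of Theorem \ref{thm1:perturb} in this case. The perturbed Novikov product is $x *_\alpha y = \alpha(x * y) = x * y$, so it is unchanged; the perturbed twisting map is $\alpha^2 = Id_A$; and the perturbed commutative product is $x \diamond y = a \cdot (x \cdot y)$, which equals $a \cdot x \cdot y$ since $\cdot$ is commutative and associative in a Novikov-Poisson algebra, matching the displayed formula in the statement.

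Finally, Theorem \ref{thm1:perturb} tells us that $A' = (A,\diamond,*,Id_A)$ is a multiplicative Hom-Novikov-Poisson algebra, and by the same identification this is exactly the assertion that $(A,\diamond,*)$ is a Novikov-Poisson algebra. One could also quote Lemma \ref{lem1:perturb} separately for the commutative associative part, but it is subsumed by Theorem \ref{thm1:perturb}. I expect essentially no obstacle here beyond this bookkeeping; the only two points that require a moment's thought are that the side condition $\alpha^2(a) = a$ degenerates to a tautology and that $*_\alpha$ collapses to $*$, both of which are immediate once $\alpha = Id_A$.
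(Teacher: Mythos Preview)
Your proposal is correct and matches the paper's own approach: the corollary is stated immediately after the sentence ``Setting $\alpha = Id_A$ in Theorem \ref{thm1:perturb}, we recover Lemma 2.4 in \cite{xu2},'' with no further proof given, and your bookkeeping (the side condition $\alpha^2(a)=a$ becoming vacuous, $*_\alpha$ collapsing to $*$, and $\alpha^2 = Id_A$) is exactly the intended specialization.
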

%%%%%%%%%%%%%%%%%%%%%

The next result is a variation on the theme of Theorem \ref{thm1:perturb}.  It gives a way to perturb a Hom-Novikov-Poisson algebra structure using a suitable element and its own twisting map.

%%%%%%%%%%%%%%%%%%%%
\begin{theorem}
\label{thm2:perturb}
Let $(A,\cdot,*,\alpha)$ be a multiplicative Hom-Novikov-Poisson algebra and $a \in A$ be an element such that $\alpha^2(a) = a$.  Then
\[
\Abar = (A,\dotalpha,\times,\alpha^2)
\]
is also a multiplicative Hom-Novikov-Poisson algebra, where
\[
\begin{split}
x \dotalpha y &= \alpha(x \cdot y) = \alpha(x) \cdot \alpha(y),\\
x \times y &= \alpha(x) * \alpha(y) + a \cdot (x \cdot y)
\end{split}
\]
for all $x,y \in A$.
\end{theorem}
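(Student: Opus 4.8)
The plan is to verify the three defining pieces of a Hom-Novikov-Poisson structure for $\Abar = (A,\dotalpha,\times,\alpha^2)$ in turn. First, $(A,\dotalpha,\alpha^2)$ is a multiplicative commutative Hom-associative algebra by Corollary \ref{cor1.6:twist} (the $n=1$ case), since $x\dotalpha y = \alpha(x\cdot y)$ is precisely the $\alpha^1\cdot$ product on $(A,\cdot,\alpha)$. Second, I must check that $(A,\times,\alpha^2)$ is a Hom-Novikov algebra, where $x\times y = \alpha(x)*\alpha(y) + a\cdot(x\cdot y)$. Here it helps to recognize the two summands: $x\mapsto\alpha(x)*\alpha(y)$ is the $*_\alpha$ product from Corollary \ref{cor1.5:twist}, which is already Hom-Novikov with twisting map $\alpha^2$, and $a\cdot(x\cdot y)$ is the $\diamond$ product of Lemma \ref{lem1:perturb}. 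So the task is to show that the Hom-Novikov identities \eqref{homnovikov} are stable under adding $\diamond$ to $*_\alpha$; this will follow from expanding $as_\times$ and $(x\times y)\times\alpha^2(z)$ bilinearly into four terms each and matching the pure-$*_\alpha$ terms, the pure-$\diamond$ terms, and the two cross terms. The pure terms are handled by Corollaries \ref{cor1.5:twist} and \ref{cor1.6:twist} (note $\diamond$ is Hom-associative, hence trivially Hom-Novikov since $\cdot$ is commutative and Lemma \ref{lem:comm} applies). The cross terms, e.g. $(\alpha(x)*\alpha(y))\cdot(a\cdot\text{-stuff})$ versus $a\cdot((\alpha(x)*\alpha(y))\cdot\text{-stuff})$, must be shown symmetric in the appropriate pair of variables using Hom-associativity of $\cdot$, Lemma \ref{lem:comm}, and the compatibility conditions \eqref{hnp} of $A$ — in particular \eqref{rightmult2}/\eqref{rightmult''} in the form $(x\cdot y)*\alpha(z)=\alpha(x)\cdot(y*z)$.

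Third, I must verify the two compatibility conditions \eqref{hnp} for $\Abar$, relating $\dotalpha$ and $\times$. For \eqref{rightmult2} (equivalently \eqref{rightmult''} via Lemma \ref{lem:rightmult}, since $\dotalpha$ is commutative) I would compute $(x\dotalpha y)\times\alpha^2(z)$ directly, using multiplicativity to move the $\alpha$'s outward, then split $\times$ into its two summands: the $*_\alpha$-summand is handled by \eqref{rightmult''} in $A$ exactly as in the proof of Theorem \ref{thm1:perturb}, and the $\diamond$-summand by the Hom-associativity/Lemma \ref{lem:comm} computation in Lemma \ref{lem1:perturb}. Both summands should collapse to $\alpha^2(x)\dotalpha(y\times z)$. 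For \eqref{mixedass} I would write out the mixed Hom-associator
\[
as_{\Abar}(x,y,z) = (x\times y)\dotalpha\alpha^2(z) - \alpha^2(x)\times(y\dotalpha z),
\]
substitute the definitions, use multiplicativity and $\alpha^2(a)=a$ to normalize the powers of $\alpha$, and split into the $*_\alpha$-contribution and the $\diamond$-contribution. The $*_\alpha$-contribution should reduce to $as_A(\alpha^2(x),\alpha^2(y),\alpha(z)a)$ or a similar expression symmetric in $x,y$ by \eqref{mixedass} in $A$, just as in Theorem \ref{thm1:perturb}; the $\diamond$-contribution, being built entirely from the commutative Hom-associative product $\cdot$ and the element $a$, is symmetric in all of $x,y,z$ by Lemma \ref{lem:comm}. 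Adding the two gives symmetry in $x,y$.

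I expect the main obstacle to be the bookkeeping in the Hom-Novikov verification for $(A,\times,\alpha^2)$, specifically showing the cross terms in $as_\times$ are symmetric in the first two variables: one gets terms mixing $*$ and $\cdot$ with three factors of $\alpha$ and one factor of $a$, and making them match requires applying \eqref{mixedass} and \eqref{rightmult''} of $A$ after carefully relocating the $a$ via Lemma \ref{lem:comm}. Multiplicativity of $A$ and the hypothesis $\alpha^2(a)=a$ are used repeatedly to keep all the twisting-map exponents consistent, and one should check the multiplicativity of $\Abar$ itself along the way — which is straightforward since $\alpha\,\dotalpha = \dotalpha\,(\alpha^2)^{\otimes 2}$ and $\alpha\times = \times(\alpha^2)^{\otimes 2}$ both reduce to multiplicativity of $\cdot$ and $*$ together with $\alpha^2(a)=a$.
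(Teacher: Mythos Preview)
Your proposal is correct and follows essentially the same route as the paper: the paper also invokes Corollary~\ref{cor1.6:twist} for $(A,\dotalpha,\alpha^2)$, then proves $(A,\times,\alpha^2)$ is Hom-Novikov (Lemma~\ref{lem2a:perturb}) by expanding $(x\times y)\times\alpha^2(z)$ and $\alpha^2(x)\times(y\times z)$ into four terms each---the pure $*_\alpha$, pure $\diamond$, and two cross pieces---and handling them via \eqref{rightmult} in $A$, Lemma~\ref{lem:comm}, Lemma~\ref{lem:rightmult}, and \eqref{mixedass}, exactly as you outline; finally it checks the compatibility conditions (Lemma~\ref{lem2b:perturb}) by the same split. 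Two small refinements relative to your sketch: in the mixed Hom-associator computation the $\diamond$-contributions actually \emph{cancel} (not merely contribute something symmetric), and the surviving $*_\alpha$-contribution is $as_A(\alpha^2(x),\alpha^2(y),\alpha^2(z))$ rather than the $\alpha(z)a$ variant from Theorem~\ref{thm1:perturb}.
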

%%%%%%%%%%%%%%%%%%%%

\begin{proof}
Throughout the proof of Theorem \ref{thm2:perturb}, we abbreviate $x \cdot y$ to $xy$.  By Corollary \ref{cor1.6:twist} (the $n=1$ case) $(A,\dotalpha,\alpha^2)$ is a multiplicative commutative Hom-associative algebra.  We show that $(A,\times,\alpha^2)$ is a multiplicative Hom-Novikov algebra in Lemma \ref{lem2a:perturb} below. The compatibility conditions \eqref{hnp} for $\Abar$ are proved in Lemma \ref{lem2b:perturb} below.
\end{proof}

%%%%%%%%%%%%%%%%%%%%
\begin{lemma}
\label{lem2a:perturb}
Under the assumptions of Theorem \ref{thm2:perturb}, $(A,\times,\alpha^2)$ is a multiplicative Hom-Novikov algebra.
\end{lemma}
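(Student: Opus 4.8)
The plan is to verify directly that $(A,\times,\alpha^2)$ satisfies the two Hom-Novikov axioms \eqref{homnovikov} together with multiplicativity, by expanding each axiom into terms coming from the two summands $\alpha(x)*\alpha(y)$ and $a\cdot(x\cdot y)$ of the product $\times$ and matching them up using the structure already available on $A$. Multiplicativity is the easy part: since $\alpha\mu_i=\mu_i\alpha^{\otimes 2}$ for both products on $A$ and $\alpha^2(a)=a$, one checks $\alpha^2(x\times y)=\alpha^2(x)\times\alpha^2(y)$ by applying $\alpha^2$ to each summand and absorbing one copy of $\alpha^2$ into $a$. So the substance is the two identities \eqref{rightmult} and \eqref{leftsymmetric} for $\times$ with twisting map $\alpha^2$.

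For the right-multiplication identity \eqref{rightmult} — that $(x\times y)\times\alpha^2(z)$ is symmetric in $y$ and $z$ — I would expand $(x\times y)\times\alpha^2(z)$ into four groups of terms according to which summand of $\times$ is used at each of the two multiplications. Schematically the four pieces are: a pure $*$-piece of the shape $(\alpha^2(x)*\alpha^2(y))*\alpha^3(z)$, which is symmetric in $y,z$ by \eqref{rightmult} in $A$ (after pulling out $\alpha$'s); a pure $\cdot$-piece involving $a\cdot(a\cdot(\cdots))$, symmetric by Lemma \ref{lem:comm}; and two mixed pieces, one of the form $a\cdot\big((\alpha(x)*\alpha(y))\cdot\alpha^2(z)\big)$ and one of the form $\big(\alpha^2(x)*\alpha^2(y)\big)*$(something with $a$), for which I would use the compatibility conditions \eqref{rightmult2}/\eqref{rightmult''} in $A$ (via Lemma \ref{lem:rightmult}) to rewrite each so that $y$ and $z$ swap into each other's roles, again modulo Lemma \ref{lem:comm} and multiplicativity. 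Care with the placement of $\alpha$'s and of the fixed element $a$ is the only delicate bookkeeping here.

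For the left-symmetry of the Hom-associator $as_\times$ — that $as_\times(x,y,z)=(x\times y)\times\alpha^2(z)-\alpha^2(x)\times(y\times z)$ is symmetric in $x$ and $y$ — the natural approach is to show that $as_\times$ decomposes as a sum of terms each of which is separately symmetric in $x,y$. Expanding both $(x\times y)\times\alpha^2(z)$ and $\alpha^2(x)\times(y\times z)$ into their four-term forms and pairing them appropriately, I expect to land on an expression built from: a copy of $as_{*}$ of $A$ evaluated at $(\alpha^2(x),\alpha^2(y),\text{something})$, which is symmetric in its first two slots by \eqref{leftsymmetric} in $A$; a copy of the mixed Hom-associator $as_A$ at $(\alpha^2(x),\alpha^2(y),\cdot)$, symmetric by \eqref{mixedass} in $A$; and a purely $\cdot$-associator piece that vanishes (or is fully symmetric) by Hom-associativity and Lemma \ref{lem:comm}. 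This mirrors exactly the bookkeeping used in the proof of Theorem \ref{thm1:perturb}, where $as_{A'}$ was identified with $as_A$ at shifted arguments. The main obstacle will be precisely this term-matching: after expansion there are several terms mixing $*$, $\cdot$, and $a$, and one must repeatedly invoke Hom-associativity of $\cdot$, Lemma \ref{lem:comm}, Lemma \ref{lem:rightmult}, and multiplicativity to regroup them into the three clean blocks above; keeping the $\alpha$-exponents and the single factor of $a$ in the right place is where an error is most likely to creep in, but no genuinely new idea beyond those already used in Section \ref{sec:perturb} is required.
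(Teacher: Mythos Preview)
Your plan is correct and matches the paper's approach: expand $(x\times y)\times\alpha^2(z)$ and $\alpha^2(x)\times(y\times z)$ each into four terms (pure-$*$, pure-$\cdot$, and two mixed), then for \eqref{rightmult} show the pure pieces are symmetric in $y,z$ while the two mixed pieces swap into one another via \eqref{rightmult2}, and for \eqref{leftsymmetric} pair the four terms against their counterparts and check symmetry in $x,y$ pairwise.

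One point where your expectation is too optimistic: the left-symmetry computation does \emph{not} mirror Theorem \ref{thm1:perturb} as cleanly as you suggest. In the paper's pairing, the pure-$*$ difference is $as_*(\alpha^2(x),\alpha^2(y),\alpha^2(z))$ as you predict, and the pure-$\cdot$ difference vanishes; but of the two mixed differences, one vanishes outright and the other does \emph{not} reduce to $as_A(\alpha^2(x),\alpha^2(y),\cdot)$. Instead, the left-symmetry of $as_A$ must be invoked at the arguments $(\alpha^2(x),\,a\cdot\alpha(z),\,\alpha^2(y))$ --- with the element $a$ appearing in the \emph{middle} slot --- and further rewriting via \eqref{rightmult2} and Lemma \ref{lem:rightmult} is needed before the remaining expression is visibly symmetric in $x$ and $y$. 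This is precisely the delicate bookkeeping you flagged, so your caveats are well placed; just do not expect a one-line identification with $as_A$ at shifted arguments.
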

%%%%%%%%%%%%%%%%%%%%

\begin{proof}
The multiplicativity of $(A,\times,\alpha^2)$ follows from that of $A$ and the assumption $\alpha^2(a) = a$.  Pick arbitrary elements $x,y,z \in A$.  To check the condition \eqref{rightmult}, we must show that the expression $(x \times y) \times \alpha^2(z)$ is symmetric in $y$ and $z$.  Expanding this expression in terms of $\cdot$, $*$, and $\alpha$, we have:
\begin{equation}
\label{xyztimes}
\begin{split}
(x \times y) \times \alpha^2(z)
&= (\alpha(x) * \alpha(y) + a(xy)) \times \alpha^2(z)\\
&= \underbrace{(\alpha^2(x) * \alpha^2(y)) * \alpha^3(z)}_{t} + \underbrace{\alpha(a(xy)) * \alpha^3(z)}_{u(x,y,z)}\\
&\relphantom{} + \underbrace{a\{(\alpha(x)*\alpha(y)) \alpha^2(z)\}}_{v(x,y,z)}
+ \underbrace{a\{(a(xy))\alpha^2(z)\}}_{w}
\end{split}
\end{equation}
The term $t$ in \eqref{xyztimes} is symmetric in $y$ and $z$ by \eqref{rightmult} in $(A,*,\alpha)$.  By the Hom-associativity in $(A,\cdot,\alpha)$, the term $w$ in \eqref{xyztimes} can be rewritten as
\begin{equation}
\label{w}
w = a\{\alpha(a)((xy)\alpha(z))\},
\end{equation}
which is symmetric in $x$, $y$, and $z$ by Lemma \ref{lem:comm}.  Next we claim that
\[
u(x,y,z) = v(x,z,y)
\]
in \eqref{xyztimes}.  To prove this, we compute as follows:
\[
\begin{split}
u(x,y,z) &= \{\alpha(a)(\alpha(x)\alpha(y))\} * \alpha^3(z)\bymult\\
&= \{(a\alpha(y))\alpha^2(x)\} * \alpha^3(z)\quad\text{(by Lemma \ref{lem:comm})}\\
&= \alpha(a\alpha(y))\{\alpha^2(x) * \alpha^2(z)\}\quad\text{(by Lemma \ref{lem:rightmult})}\\
&= \{\alpha(a)\alpha^2(y)\} \alpha(\alpha(x) * \alpha(z))\bymult\\
&= \alpha^2(a)\{\alpha^2(y) (\alpha(x) * \alpha(z))\}\byhomass\\
&= a\{(\alpha(x) * \alpha(z)) \alpha^2(y)\}\\
&= v(x,z,y).
\end{split}
\]
By \eqref{xyztimes} it follows that $(x \times y) \times \alpha^2(z)$ is symmetric in $y$ and $z$, thereby proving \eqref{rightmult} for $(A,\times,\alpha^2)$.

For \eqref{leftsymmetric} we must show that the Hom-associator
\[
as_\times (x,y,z) = (x \times y) \times \alpha^2(z) - \alpha^2(x) \times (y \times z)
\]
in $(A,\times,\alpha^2)$ is symmetric in $x$ and $y$.  Expanding the second term in this Hom-associator, we have:
\begin{equation}
\label{xyztimes'}
\begin{split}
\alpha^2(x) \times (y \times z)
&= \alpha^2(x) \times (\alpha(y) * \alpha(z) + a(yz))\\
&= \underbrace{\alpha^3(x) * (\alpha^2(y) * \alpha^2(z))}_{t'} + \underbrace{\alpha^3(x) * \alpha(a(yz))}_{u'(x,y,z)}\\
&\relphantom{} + \underbrace{a\{\alpha^2(x)(\alpha(y) * \alpha(z))\}}_{v'}
+ \underbrace{a\{\alpha^2(x)(a(yz))\}}_{w'}.
\end{split}
\end{equation}
Using the notations in \eqref{xyztimes} (with $u = u(x,y,z)$) and \eqref{xyztimes'}, we have
\begin{equation}
\label{homasstimes}
as_\times (x,y,z) = (t-t') + (w-w') + (u-v') + (v(x,y,z) - u'(x,y,z)).
\end{equation}
We now show that
\[
w-w' = 0 = u - v'
\]
and that both $(t-t')$ and $(v(x,y,z) - u'(x,y,z))$ are symmetric in $x$ and $y$.

The first summand on the right-hand side of \eqref{homasstimes} is:
\[
t - t' = as_*(\alpha^2(x),\alpha^2(y),\alpha^2(z)),
\]
where $as_*$ is the Hom-associator of $A$ with respect to $*$. Since $(A,*,\alpha)$ is a Hom-Novikov algebra, $as_*$ is symmetric in its first two variables by \eqref{leftsymmetric}, which implies that $(t-t')$ is symmetric in $x$ and $y$.

For the second summand on the right-hand side of \eqref{homasstimes}, note that by \eqref{w} we have:
\[
\begin{split}
w &= a\{\alpha(a)((xy)\alpha(z))\}\\
&= a\{\alpha(a)((yz)\alpha(x))\}\quad\text{(by Lemma \ref{lem:comm})}\\
&= a\{\alpha^2(x)(a(yz))\}\quad\text{(by Lemma \ref{lem:comm})}\\
&= w'.
\end{split}
\]
Therefore, we have $w-w' = 0$.

For the third summand on the right-hand side of \eqref{homasstimes}, we have:
\[
\begin{split}
u &= \{\alpha(a)(\alpha(x)\alpha(y))\} * \alpha^3(z)\\
&= \{\alpha^2(y)(a\alpha(x))\} * \alpha^3(z)\quad\text{(by Lemma \ref{lem:comm})}\\
&= (\alpha^2(y) * \alpha^2(z)) \alpha(a\alpha(x))\quad\text{(by \eqref{rightmult2})}\\
&= \{\alpha(a)\alpha^2(x)\}\alpha(\alpha(y) * \alpha(z))\bymult\\
&= \alpha^2(a)\{\alpha^2(x)(\alpha(y) * \alpha(z))\}\byhomass\\
&= v'.
\end{split}
\]
Therefore, we have $u-v' = 0$.

For the last summand on the right-hand side of \eqref{homasstimes}, first note that:
\begin{equation}
\label{v}
\begin{split}
v(x,y,z)  &= \alpha^2(a)\{(\alpha(x) * \alpha(y)) \alpha^2(z)\}\\
&= \{\alpha(a)\alpha^2(z)\} \alpha(\alpha(x) * \alpha(y))\quad\text{(by Lemma \ref{lem:comm})}\\
&= \alpha(a\alpha(z)) \{\alpha^2(x) * \alpha^2(y)\}\bymult\\
&= \{(a\alpha(z))\alpha^2(x)\} * \alpha^3(y)\quad\text{(by Lemma \ref{lem:rightmult})}\\
&= \{(a\alpha(z)) * \alpha^2(y)\} \alpha^3(x)\quad\text{(by \eqref{rightmult2})}.
\end{split}
\end{equation}
On the other hand, we have:
\[
\begin{split}
u'(x,y,z) &= \alpha^3(x) * \{\alpha(a)(\alpha(y)\alpha(z))\}\\
&= \alpha^3(x) * \{(a\alpha(z))\alpha^2(y)\}\quad\text{(by Lemma \ref{lem:comm})}\\
&= \{\alpha^2(x) * (a\alpha(z))\}\alpha^3(y) + \alpha(a\alpha(z)) * (\alpha^2(x)\alpha^2(y))\\
&\relphantom{} - \{(a\alpha(z)) * \alpha^2(x)\}\alpha^3(y)\quad\text{(by left-symmetry of $as_A(\alpha^2(x),a\alpha(z),\alpha^2(y))$ \eqref{mixedass})}\\
&= \{\alpha^2(x)\alpha^2(y)\} * \alpha(a\alpha(z)) + \alpha(a\alpha(z)) * (\alpha^2(x)\alpha^2(y))\\
&\relphantom{} - v(y,x,z)\quad\text{(by \eqref{rightmult2} and \eqref{v})}.
\end{split}
\]
Therefore, we have:
\[
\begin{split}
v(x,y,z) - u'(x,y,z)
&= v(x,y,z) + v(y,x,z)\\
&\relphantom{} - \{\alpha^2(x)\alpha^2(y)\} * \alpha(a\alpha(z)) - \alpha(a\alpha(z)) * (\alpha^2(x)\alpha^2(y)),
\end{split}
\]
which is symmetric in $x$ and $y$ because $\cdot$ is commutative.

In summary, the previous four paragraphs and \eqref{homasstimes} show that the Hom-associator in $(A,\times,\alpha^2)$ is symmetric in $x$ and $y$, thereby proving \eqref{leftsymmetric}.
\end{proof}

%%%%%%%%%%%%%%%%%%%%
\begin{lemma}
\label{lem2b:perturb}
Under the assumptions of Theorem \ref{thm2:perturb}, $\Abar$ satisfies the compatibility conditions \eqref{hnp}.
\end{lemma}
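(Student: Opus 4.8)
The plan is to verify the two compatibility conditions \eqref{hnp} for $\Abar = (A,\dotalpha,\times,\alpha^2)$ one at a time, in each case expanding everything in terms of the original operations $\cdot$, $*$, and $\alpha$, and then repeatedly applying multiplicativity, the Hom-associativity of $(A,\cdot,\alpha)$, Lemma \ref{lem:comm}, Lemma \ref{lem:rightmult}, the left-symmetry \eqref{mixedass} in $A$, and the hypothesis $\alpha^2(a)=a$ to reorganize the powers of $\alpha$. As in the proof of Theorem \ref{thm2:perturb}, I abbreviate $x\cdot y$ to $xy$.

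First I would dispose of \eqref{rightmult2} for $\Abar$. Since $\dotalpha$ is commutative, Lemma \ref{lem:rightmult} lets me replace \eqref{rightmult2} by the equivalent identity $(x\dotalpha y)\times\alpha^2(z) = \alpha^2(x)\dotalpha(y\times z)$. Expanding the left side gives
\[
(\alpha^2(x)\alpha^2(y))*\alpha^3(z) + a\bigl\{(\alpha(x)\alpha(y))\alpha^2(z)\bigr\};
\]
by \eqref{rightmult''} in $A$ the first summand equals $\alpha^3(x)(\alpha^2(y)*\alpha^2(z))$, and by Hom-associativity of $\cdot$ the second equals $a\{\alpha^2(x)(\alpha(y)\alpha(z))\}$. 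Expanding the right side gives $\alpha^3(x)(\alpha^2(y)*\alpha^2(z)) + \alpha^3(x)\{\alpha(a)(\alpha(y)\alpha(z))\}$. The $*$-summands coincide, and the remaining two $\cdot$-summands agree: writing $a=\alpha^2(a)=\alpha(\alpha(a))$, Lemma \ref{lem:comm} says the expression $\alpha(p)(qr)$ is invariant under permutations of $p,q,r$, and swapping $p=\alpha(a)$ with $q=\alpha^2(x)$ gives exactly the desired equality. Hence $\Abar$ satisfies \eqref{rightmult2}.

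Next I would prove \eqref{mixedass} for $\Abar$, i.e.\ that the mixed Hom-associator
\[
as_{\Abar}(x,y,z) = (x\times y)\dotalpha\alpha^2(z) - \alpha^2(x)\times(y\dotalpha z)
\]
is symmetric in $x$ and $y$. Expanding both terms and collecting, I expect $as_{\Abar}(x,y,z) = P + Q$, where
\[
P = (\alpha^2(x)*\alpha^2(y))\alpha^3(z) - \alpha^3(x)*(\alpha^2(y)\alpha^2(z)) = as_A(\alpha^2(x),\alpha^2(y),\alpha^2(z))
\]
is the original mixed Hom-associator of $A$ evaluated at the triple $(\alpha^2(x),\alpha^2(y),\alpha^2(z))$, hence symmetric in $x,y$ by \eqref{mixedass} in $A$, and
\[
Q = \bigl(\alpha(a)(\alpha(x)\alpha(y))\bigr)\alpha^3(z) - a\bigl(\alpha^2(x)(\alpha(y)\alpha(z))\bigr).
\]
The crux is to show $Q=0$. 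Two applications of the Hom-associativity of $(A,\cdot,\alpha)$ together with multiplicativity of $\cdot$ rewrite the first summand of $Q$ as $(\alpha(a)\alpha^2(x))(\alpha^2(y)\alpha^2(z))$; using $a=\alpha(\alpha(a))$, the same moves rewrite the second summand as the same element, so $Q$ vanishes identically and $as_{\Abar}(x,y,z)=P$ is symmetric in $x,y$, which is \eqref{mixedass} for $\Abar$.

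The main obstacle is purely organizational: keeping the powers of $\alpha$ aligned while rewriting each monomial, and spotting which particular instance of Hom-associativity, commutativity, or Lemma \ref{lem:comm} makes the perturbation terms collapse. Once one sees that the entire $a$-dependent contribution $Q$ to the mixed Hom-associator is identically zero, and that the remaining part $P$ is just the $A$-mixed-Hom-associator at the shifted triple $(\alpha^2(x),\alpha^2(y),\alpha^2(z))$, both conditions in \eqref{hnp} follow immediately.
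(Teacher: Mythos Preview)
Your argument is correct and follows essentially the same route as the paper: you verify \eqref{rightmult''} for $\Abar$ by direct expansion and the same ingredients (multiplicativity, Hom-associativity, Lemma \ref{lem:rightmult}, Lemma \ref{lem:comm}, $\alpha^2(a)=a$), and for \eqref{mixedass} you show that the $a$-dependent contribution $Q$ vanishes identically, leaving $as_{\Abar}(x,y,z)=as_A(\alpha^2(x),\alpha^2(y),\alpha^2(z))$, exactly as the paper does. The only cosmetic difference is that you rewrite both $a$-terms to the common form $(\alpha(a)\alpha^2(x))(\alpha^2(y)\alpha^2(z))$, whereas the paper rewrites the second $a$-term to match the first; the substance is the same.
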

%%%%%%%%%%%%%%%%%%%%

\begin{proof}
Pick elements $x,y,z \in A$.  To prove the compatibility condition \eqref{rightmult2}, or equivalently \eqref{rightmult''}, for $\Abar$, we compute as follows:
\[
\begin{split}
&(x \dotalpha y) \times \alpha^2(z)
= (\alpha(x)\alpha(y)) \times \alpha^2(z)\\
&= \{\alpha^2(x)\alpha^2(y)\} * \alpha^3(z) + a\{(\alpha(x)\alpha(y))\alpha^2(z)\}\\
&= \alpha^3(x)\{\alpha^2(y) * \alpha^2(z)\} + \alpha^2(a)\{\alpha^2(x)(\alpha(y)\alpha(z))\}\quad\text{(by \eqref{rightmult''} and Hom-associativity)}\\
&= \alpha^3(x)\{\alpha^2(y) * \alpha^2(z)\} + \alpha^3(x)\{\alpha(a)(\alpha(y)\alpha(z))\}\quad\text{(by Lemma \ref{lem:comm})}\\
&= \alpha^3(x) \alpha\{\alpha(y) * \alpha(z) + a(yz)\}\bymult\\
&= \alpha^2(x) \dotalpha (y \times z).
\end{split}
\]
To prove the compatibility condition \eqref{mixedass} for $\Abar$, we must show that the mixed Hom-associator
\[
as_{\Abar}(x,y,z) = (x \times y) \dotalpha \alpha^2(z) - \alpha^2(x) \times (y \dotalpha z)
\]
of $\Abar$ is symmetric in $x$ and $y$.  The first summand in this mixed Hom-associator is:
\begin{equation}
\label{abar1}
\begin{split}
(x \times y) \dotalpha \alpha^2(z)
&= \alpha\{\alpha(x) * \alpha(y) + a(xy)\} \alpha^3(z)\\
&= \{\alpha^2(x) * \alpha^2(y)\}\alpha^3(z) + \{\alpha(a)(\alpha(x)\alpha(y))\}\alpha^3(z).
\end{split}
\end{equation}
The second summand in the mixed Hom-associator is:
\begin{equation}
\label{abar2}
\begin{split}
\alpha^2(x) \times (y \dotalpha z)
&= \alpha^2(x) \times (\alpha(y)\alpha(z))\\
&= \alpha^3(x) * (\alpha^2(y)\alpha^2(z)) + a\{\alpha^2(x)(\alpha(y)\alpha(z))\}\\
&= \alpha^3(x) * (\alpha^2(y)\alpha^2(z)) + \alpha^2(a)\{(\alpha(x)\alpha(y))\alpha^2(z)\}\byhomass\\
&= \alpha^3(x) * (\alpha^2(y)\alpha^2(z)) + \{\alpha(a)(\alpha(x)\alpha(y))\}\alpha^3(z)\byhomass.
\end{split}
\end{equation}
Combining \eqref{abar1} and \eqref{abar2}, it follows that the mixed Hom-associators of $\Abar$ and $A$ are related as follows:
\[
\begin{split}
as_{\Abar}(x,y,z)
&= \{\alpha^2(x) * \alpha^2(y)\}\alpha^3(z) - \alpha^3(x) * (\alpha^2(y)\alpha^2(z))\\
&= as_A(\alpha^2(x),\alpha^2(y),\alpha^2(z)).
\end{split}
\]
Since $as_A$ is left-symmetric by \eqref{mixedass}, we conclude that $as_{\Abar}$ is symmetric in $x$ and $y$, as desired.
\end{proof}

With Lemmas \ref{lem2a:perturb} and \ref{lem2b:perturb} proved, the proof of Theorem \ref{thm2:perturb} is complete.  We now discuss some special cases of Theorem \ref{thm2:perturb}.

Setting $\alpha = Id_A$ in Theorem \ref{thm2:perturb}, we recover Lemma 2.3 in \cite{xu2}:

%%%%%%%%%%%%%%%%%%%%%
\begin{corollary}
\label{cor2:perturb}
Let $(A,\cdot,*)$ be a Novikov-Poisson algebra and $a \in A$ be an arbitrary element.  Then $(A,\cdot,\times)$ is also a Novikov-Poisson algebra, where
\[
x \times y = x * y + a \cdot x \cdot y
\]
for all $x,y \in A$.
\end{corollary}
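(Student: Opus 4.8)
The plan is to deduce this directly from Theorem~\ref{thm2:perturb} by specializing the twisting map to the identity. First I would rephrase the hypothesis: a Novikov-Poisson algebra $(A,\cdot,*)$ is, by the conventions set up after Definition~\ref{def:hnp}, exactly a multiplicative Hom-Novikov-Poisson algebra $(A,\cdot,*,Id_A)$. So the corollary is simply the $\alpha = Id_A$ instance of Theorem~\ref{thm2:perturb}, and the only real work is to verify that the hypotheses translate correctly and to unwind the output.

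Next I would check the hypotheses of Theorem~\ref{thm2:perturb} in this case. That theorem requires an element $a$ with $\alpha^2(a) = a$; with $\alpha = Id_A$ this condition reads $a = a$, hence holds for an arbitrary $a \in A$, precisely as the corollary allows.

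Then I would unwind the conclusion $\Abar = (A,\dotalpha,\times,\alpha^2)$ of Theorem~\ref{thm2:perturb}. Since $\alpha = Id_A$ we get $\alpha^2 = Id_A$; the operation $x \dotalpha y = \alpha(x \cdot y) = x \cdot y$ reduces to $\cdot$; and $x \times y = \alpha(x) * \alpha(y) + a \cdot (x \cdot y) = x * y + a \cdot x \cdot y$, the triple product $a \cdot x \cdot y$ being unambiguous because $\cdot$ is associative in a Novikov-Poisson algebra. Hence $\Abar = (A,\cdot,\times,Id_A)$ is a multiplicative Hom-Novikov-Poisson algebra with identity twisting map, i.e.\ a Novikov-Poisson algebra, which is the assertion.

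There is essentially no obstacle: all the content lives in Theorem~\ref{thm2:perturb} (equivalently in Lemmas~\ref{lem2a:perturb} and~\ref{lem2b:perturb}), and the corollary is a pure dictionary translation recovering Lemma~2.3 of~\cite{xu2}. The only step that deserves a second glance is the bookkeeping of the previous paragraph—confirming that after setting $\alpha = Id_A$ no residual twist survives in either structure map and that the element condition becomes vacuous—so I would just display those two reductions and conclude.
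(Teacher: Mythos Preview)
Your proposal is correct and matches the paper's own approach exactly: the paper simply states that setting $\alpha = Id_A$ in Theorem~\ref{thm2:perturb} recovers this result, and your bookkeeping (that the condition $\alpha^2(a)=a$ becomes vacuous and that $\dotalpha$, $\times$, $\alpha^2$ reduce to $\cdot$, $x*y + a\cdot x\cdot y$, $Id_A$) is precisely the content of that specialization.
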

%%%%%%%%%%%%%%%%%%%%%

Forgetting about the Hom-associative product $\dotalpha$ in Theorem \ref{thm2:perturb}, we obtain the following result, which gives a non-trivial way to construct a Hom-Novikov algebra from a Hom-Novikov-Poisson algebra.

%%%%%%%%%%%%%%%%%%%%%%%
\begin{corollary}
\label{cor4.1:perturb}
Let $(A,\cdot,*,\alpha)$ be a multiplicative Hom-Novikov-Poisson algebra and $a \in A$ be an element such that $\alpha^2(a) = a$.  Then $(A,\times,\alpha^2)$ is a multiplicative Hom-Novikov algebra, where
\[
x \times y = \alpha(x)*\alpha(y) + a \cdot (x \cdot y)
\]
for all $x,y \in A$.
\end{corollary}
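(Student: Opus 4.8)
The plan is to deduce this immediately from Theorem \ref{thm2:perturb}. The binary operation in the present statement, $x \times y = \alpha(x) * \alpha(y) + a \cdot (x \cdot y)$, is \emph{verbatim} the operation $\times$ appearing in Theorem \ref{thm2:perturb}, and the twisting map is $\alpha^2$ in both cases. Under the hypotheses here (a multiplicative Hom-Novikov-Poisson algebra $(A,\cdot,*,\alpha)$ together with $a \in A$ satisfying $\alpha^2(a) = a$), Theorem \ref{thm2:perturb} produces the multiplicative Hom-Novikov-Poisson algebra $\Abar = (A,\dotalpha,\times,\alpha^2)$. By part (2) of Definition \ref{def:hnp}, every Hom-Novikov-Poisson algebra has, as part of its structure, an underlying Hom-Novikov algebra on its second product; applying this to $\Abar$ shows at once that $(A,\times,\alpha^2)$ is a Hom-Novikov algebra, and its multiplicativity is part of the multiplicativity of $\Abar$ (which uses precisely the hypothesis $\alpha^2(a) = a$). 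Thus ``forgetting'' the commutative Hom-associative product $\dotalpha$ leaves exactly the asserted Hom-Novikov algebra.

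Equivalently — and this is really the only thing worth pointing to — one may simply invoke Lemma \ref{lem2a:perturb}, whose statement \emph{is} the present corollary: it asserts that under the assumptions of Theorem \ref{thm2:perturb}, which are identical to those assumed here, the triple $(A,\times,\alpha^2)$ is a multiplicative Hom-Novikov algebra. No further computation is needed. The point to observe is that the product $\dotalpha$ and the two compatibility conditions \eqref{hnp} of $\Abar$ never enter into verifying the Hom-Novikov axioms \eqref{homnovikov} for $(A,\times,\alpha^2)$ in the proof of Lemma \ref{lem2a:perturb} — those axioms are checked using only $\cdot$, $*$, $\alpha$, $a$, and the Hom-Novikov-Poisson structure of $A$ — so discarding $\dotalpha$ loses nothing relevant. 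Consequently there is no genuine obstacle: the work was already carried out in proving Theorem \ref{thm2:perturb}, and the corollary is obtained by restriction of structure.
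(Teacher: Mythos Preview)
Your argument is correct and matches the paper's approach exactly: the corollary is introduced in the paper with the phrase ``Forgetting about the Hom-associative product $\dotalpha$ in Theorem \ref{thm2:perturb},'' and no separate proof is given. Your observation that Lemma \ref{lem2a:perturb} is already precisely the statement of the corollary is spot on.
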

%%%%%%%%%%%%%%%%%%%%%%%

Setting $\alpha = Id_A$ in Corollary \ref{cor4.1:perturb}, we obtain the following special case of Corollary \ref{cor2:perturb}.

%%%%%%%%%%%%%%%%%%%%%%%
\begin{corollary}
\label{cor4.2:perturb}
Let $(A,\cdot,*)$ be a Novikov-Poisson algebra and $a \in A$ be an arbitrary element.  Then $(A,\times)$ is a Novikov algebra, where
\[
x \times y = x*y + a \cdot x \cdot y
\]
for all $x,y \in A$.
\end{corollary}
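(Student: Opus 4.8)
The plan is to obtain this statement as the $\alpha = Id_A$ specialization of Corollary~\ref{cor4.1:perturb}, in exact parallel with the way Corollary~\ref{cor4.2:perturb} is advertised as a special case of Corollary~\ref{cor2:perturb}. First I would invoke the convention recorded just after Definition~\ref{def:hnp}, namely that a Novikov-Poisson algebra $(A,\cdot,*)$ is precisely a multiplicative Hom-Novikov-Poisson algebra $(A,\cdot,*,\alpha)$ with $\alpha = Id_A$. Under this identification the constraint $\alpha^2(a) = a$ appearing in the hypothesis of Corollary~\ref{cor4.1:perturb} reads $a = a$, hence is vacuous; this is exactly why the present statement is allowed to take $a \in A$ arbitrary.

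Next I would substitute $\alpha = Id_A$ into the data produced by Corollary~\ref{cor4.1:perturb}. The perturbed product becomes $x \times y = \alpha(x) * \alpha(y) + a\cdot(x\cdot y) = x*y + a\cdot x\cdot y$, which is precisely the operation in the statement, and the twisting map $\alpha^2$ of the conclusion becomes $Id_A$. Corollary~\ref{cor4.1:perturb} then asserts that $(A,\times,Id_A)$ is a multiplicative Hom-Novikov algebra. Finally I would appeal to the definition of a Novikov algebra as a multiplicative Hom-Novikov algebra with identity twisting map (the remark following Definition~\ref{def:homnovikov}) to conclude that $(A,\times)$ is a Novikov algebra.

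There is essentially no obstacle here: the whole content is the bookkeeping of specializing the Hom-level result, and one only has to check that the specialized hypotheses and formulas line up, which they do. If one prefers not to cite Corollary~\ref{cor4.1:perturb} at all, an equally short route is to start from Corollary~\ref{cor2:perturb}, which gives that $(A,\cdot,\times)$ with $x\times y = x*y + a\cdot x\cdot y$ is a Novikov-Poisson algebra, and then simply forget the commutative associative product $\cdot$, since the Novikov product of any Novikov-Poisson algebra makes it a Novikov algebra.
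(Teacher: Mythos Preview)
Your proposal is correct and follows exactly the paper's approach: the paper introduces this corollary with the sentence ``Setting $\alpha = Id_A$ in Corollary~\ref{cor4.1:perturb}, we obtain the following special case of Corollary~\ref{cor2:perturb},'' and gives no further proof. Your alternative route via Corollary~\ref{cor2:perturb} is also noted by the paper in that same sentence.
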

%%%%%%%%%%%%%%%%%%%%%%%

The following perturbation result is obtained by combining Theorems \ref{thm1:perturb} and \ref{thm2:perturb}.

%%%%%%%%%%%%%%%%%%%%%
\begin{corollary}
\label{cor3:perturb}
Let $(A,\cdot,*,\alpha)$ be a multiplicative Hom-Novikov-Poisson algebra and $a,b \in A$ be elements such that $\alpha^2(a) = a$ and $\alpha^4(b) = b$.  Then
\[
\Atilde = (A,\diamondalpha,\boxtimes,\alpha^4)
\]
is also a multiplicative Hom-Novikov-Poisson algebra, where
\[
\begin{split}
x \diamondalpha y &= \alpha(b) \cdot \alpha^2(x \cdot y),\\
x \boxtimes y &= \alpha^3(x*y) + a \cdot \alpha^2(x \cdot y)
\end{split}
\]
for all $x,y \in A$.
\end{corollary}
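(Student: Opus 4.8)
The plan is to obtain $\Atilde$ by applying Theorems \ref{thm1:perturb} and \ref{thm2:perturb} in succession, so that essentially no new computation is needed. First I would apply Theorem \ref{thm2:perturb} to the multiplicative Hom-Novikov-Poisson algebra $(A,\cdot,*,\alpha)$ and the element $a$ (which satisfies $\alpha^2(a) = a$ by hypothesis). This produces the multiplicative Hom-Novikov-Poisson algebra
\[
\Abar = (A,\dotalpha,\times,\alpha^2),
\]
where $x \dotalpha y = \alpha(x\cdot y)$ and $x \times y = \alpha(x)*\alpha(y) + a\cdot(x\cdot y)$.

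Next I would apply Theorem \ref{thm1:perturb} to $\Abar$, which is multiplicative with twisting map $\alpha^2$, using the element $b$. The hypothesis $\alpha^4(b) = b$ says precisely that $(\alpha^2)^2(b) = b$, so $b$ is an admissible choice for Theorem \ref{thm1:perturb} applied to $\Abar$. That theorem then yields the multiplicative Hom-Novikov-Poisson algebra
\[
\Abar' = \bigl(A,\ \diamond',\ \times_{\alpha^2},\ (\alpha^2)^2\bigr) = \bigl(A,\diamond',\times_{\alpha^2},\alpha^4\bigr),
\]
where, in the notation of Theorem \ref{thm1:perturb} applied to $\Abar$,
\[
x \diamond' y = b \dotalpha (x \dotalpha y)
\andspace
x \times_{\alpha^2} y = \alpha^2(x \times y).
\]

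It then remains only to check that $\diamond' = \diamondalpha$ and $\times_{\alpha^2} = \boxtimes$, which is a short unwinding of definitions using multiplicativity. For the first, $x \diamond' y = b \dotalpha (x\dotalpha y) = \alpha\bigl(b \cdot \alpha(x\cdot y)\bigr) = \alpha(b)\cdot \alpha^2(x\cdot y) = x\diamondalpha y$. For the second, $x \times_{\alpha^2} y = \alpha^2(x\times y) = \alpha^2\bigl(\alpha(x)*\alpha(y) + a\cdot(x\cdot y)\bigr) = \alpha^3(x*y) + \alpha^2(a)\cdot\alpha^2(x\cdot y) = \alpha^3(x*y) + a\cdot\alpha^2(x\cdot y) = x\boxtimes y$, using $\alpha^2(a)=a$ in the last step. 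Thus $\Atilde = \Abar'$ is a multiplicative Hom-Novikov-Poisson algebra. The only point requiring care — and the "main obstacle," though it is a mild one — is verifying that the composite twisting maps and the fixed-point hypotheses line up correctly: one must confirm that $\alpha^4(b)=b$ is the right condition for the \emph{second} application (Theorem \ref{thm1:perturb} on $\Abar$, whose twisting map is $\alpha^2$), and that no additional constraint on $a$ beyond $\alpha^2(a)=a$ is introduced along the way; both are immediate from the statements of the two theorems.
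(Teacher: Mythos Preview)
Your proposal is correct and follows exactly the same approach as the paper: first apply Theorem \ref{thm2:perturb} to obtain $\Abar$, then apply Theorem \ref{thm1:perturb} to $\Abar$ with the element $b$, noting that $(\alpha^2)^2(b)=b$. Your explicit verification that the resulting operations coincide with $\diamondalpha$ and $\boxtimes$ is a useful addition that the paper leaves implicit.
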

%%%%%%%%%%%%%%%%%%%%%

\begin{proof}
By Theorem \ref{thm2:perturb}, $\Abar = (A,\dotalpha,\times,\alpha^2)$ is a multiplicative Hom-Novikov-Poisson algebra.  Now apply Theorem \ref{thm1:perturb} to $\Abar$ and the element $b \in A$, which satisfies $(\alpha^2)^2(b) = b$.  We obtain a multiplicative Hom-Novikov-Poisson algebra $(\Abar)'$, which is $\Atilde$ above.
\end{proof}

Setting $\alpha = Id_A$ in Corollary \ref{cor3:perturb}, we recover Theorem 2.5 in \cite{xu2}:

%%%%%%%%%%%%%%%%%%%%
\begin{corollary}
\label{cor4:perturb}
Let $(A,\cdot,*)$ be a Novikov-Poisson algebra and $a,b \in A$ be arbitrary elements.  Then $(A,\diamond,\boxtimes)$ is also a Novikov-Poisson algebra, where
\[
\begin{split}
x \diamond y &= b \cdot x \cdot y,\\
x \boxtimes y &= x*y + a \cdot x \cdot y
\end{split}
\]
for all $x,y \in A$.
\end{corollary}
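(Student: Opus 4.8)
The plan is to obtain this as the $\alpha = Id_A$ specialization of Corollary \ref{cor3:perturb}. First I would recall that, by definition, a Novikov-Poisson algebra $(A,\cdot,*)$ is precisely a multiplicative Hom-Novikov-Poisson algebra $(A,\cdot,*,Id_A)$, so Corollary \ref{cor3:perturb} applies verbatim with $\alpha = Id_A$. Next I would check that the two hypotheses $\alpha^2(a) = a$ and $\alpha^4(b) = b$ degenerate to $a = a$ and $b = b$, hence hold automatically for arbitrary $a,b \in A$, which accounts for the word ``arbitrary'' in the statement. Then I would unwind the perturbed operations: $x \diamondalpha y = \alpha(b) \cdot \alpha^2(x\cdot y)$ collapses to $b \cdot (x \cdot y) = b \cdot x \cdot y$, and $x \boxtimes y = \alpha^3(x*y) + a \cdot \alpha^2(x \cdot y)$ collapses to $x*y + a \cdot x \cdot y$, matching the operations $\diamond$ and $\boxtimes$ in the statement. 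Finally, since $\alpha^4 = Id_A$, the output $\Atilde = (A,\diamondalpha,\boxtimes,\alpha^4)$ is a multiplicative Hom-Novikov-Poisson algebra whose twisting map is the identity, i.e., an ordinary Novikov-Poisson algebra, which is the desired conclusion.

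There is essentially no obstacle here beyond bookkeeping: one must keep track of which powers of $\alpha$ sit in front of which factor and confirm that setting $\alpha = Id_A$ genuinely trivializes the twisting map of $\Atilde$, so that it qualifies as an honest Novikov-Poisson algebra and not merely a Hom-Novikov-Poisson algebra. As an alternative derivation that avoids invoking the Hom-level machinery, one could instead chain Corollary \ref{cor2:perturb} with Corollary \ref{cor:perturb}: apply the former to $(A,\cdot,*)$ and the element $a$ to get the Novikov-Poisson algebra $(A,\cdot,\boxtimes)$, then apply the latter to that algebra and the element $b$ to get $(A,\diamond,\boxtimes)$, noting that the perturbation $\diamond$ only involves the commutative associative product $\cdot$, which is unchanged in the first step. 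Either route yields the statement, and I would present the one-line ``set $\alpha = Id_A$ in Corollary \ref{cor3:perturb}'' proof as the primary argument.
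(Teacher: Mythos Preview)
Your proposal is correct and matches the paper's approach exactly: the paper simply states that this corollary is obtained by setting $\alpha = Id_A$ in Corollary \ref{cor3:perturb}, which is precisely your primary argument. Your alternative route via Corollaries \ref{cor2:perturb} and \ref{cor:perturb} is also valid but not used in the paper.
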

%%%%%%%%%%%%%%%%%%%%

The following result is another special case of Corollary \ref{cor3:perturb}.

%%%%%%%%%%%%%%%%%%%%
\begin{corollary}
\label{cor5:perturb}
Let $A$ be a commutative associative algebra, $\partial \colon A \to A$ be a derivation, $\alpha \colon A \to A$ be an algebra morphism such that $\alpha\partial = \partial\alpha$, and $a,b \in A$ be elements such that $\alpha^2(a) = a$ and $\alpha^4(b) = b$.  Then $(A,\cdot,*,\alpha^4)$ is a multiplicative Hom-Novikov-Poisson algebra, where
\[
\begin{split}
x \cdot y &= \alpha^2(b)\alpha^4(xy),\\
x * y &= \alpha^4(x\partial(y)) + \alpha(a)\alpha^4(xy)
\end{split}
\]
for all $x,y \in A$.
\end{corollary}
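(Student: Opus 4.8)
The plan is to obtain this statement by composing two constructions already established: the twisting of Corollary \ref{cor3:twist} followed by the double perturbation of Corollary \ref{cor3:perturb}. Throughout, write the given commutative associative product of $A$ as juxtaposition.

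First I would apply Corollary \ref{cor3:twist} to the commutative associative algebra $A$, the derivation $\partial$, and the algebra morphism $\alpha$ (which commutes with $\partial$ by hypothesis). This yields a multiplicative Hom-Novikov-Poisson algebra $B = (A,\cdot_B,*_B,\alpha)$ with
\[
x \cdot_B y = \alpha(xy), \qquad x *_B y = \alpha(x\partial(y)).
\]
Next I would apply Corollary \ref{cor3:perturb} to $B$ and the elements $a,b \in A$. The hypotheses required there are that the twisting map of $B$ — which is $\alpha$ itself — satisfies $\alpha^2(a) = a$ and $\alpha^4(b) = b$; these are exactly the assumptions of the present statement. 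Corollary \ref{cor3:perturb} then gives a multiplicative Hom-Novikov-Poisson algebra $\Atilde = (A,\diamondalpha,\boxtimes,\alpha^4)$ with
\[
x \diamondalpha y = \alpha(b) \cdot_B \alpha^2(x \cdot_B y), \qquad
x \boxtimes y = \alpha^3(x *_B y) + a \cdot_B \alpha^2(x \cdot_B y).
\]

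Finally it remains only to unwind these formulas using $x \cdot_B y = \alpha(xy)$, $x *_B y = \alpha(x\partial(y))$, and the fact that $\alpha$ is an algebra morphism. One computes $\alpha^2(x\cdot_B y) = \alpha^3(xy)$, hence
\[
x \diamondalpha y = \alpha\bigl(\alpha(b)\,\alpha^3(xy)\bigr) = \alpha^2(b)\,\alpha^4(xy),
\]
and, since $\alpha^3(x *_B y) = \alpha^4(x\partial(y))$,
\[
x \boxtimes y = \alpha^4(x\partial(y)) + \alpha\bigl(a\,\alpha^3(xy)\bigr) = \alpha^4(x\partial(y)) + \alpha(a)\,\alpha^4(xy).
\]
These are precisely the operations $\cdot$ and $*$ in the statement, and the twisting map $\alpha^4$ matches as well, so $(A,\cdot,*,\alpha^4) = \Atilde$ is a multiplicative Hom-Novikov-Poisson algebra. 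There is no genuine obstacle here beyond bookkeeping; the only point needing care is tracking the powers of $\alpha$ introduced by composing the twisting and the perturbation, and in particular recognizing that the condition ``$\alpha^4(b) = b$'' is exactly the hypothesis ``the fourth power of the twisting map of $B$ fixes $b$'' needed to invoke Corollary \ref{cor3:perturb}, because the twisting map of $B$ is $\alpha$ itself.
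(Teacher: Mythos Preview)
Your proof is correct and follows exactly the same approach as the paper: apply Corollary~\ref{cor3:twist} to obtain the Hom-Novikov-Poisson algebra $A_\alpha$, then apply Corollary~\ref{cor3:perturb} with the elements $a$ and $b$. Your write-up simply makes explicit the bookkeeping of the operations that the paper leaves to the reader.
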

%%%%%%%%%%%%%%%%%%%%

\begin{proof}
By Corollary \ref{cor3:twist},
\[
A_\alpha = (A,\alpha\mu,\alpha\mu(Id\otimes\partial),\alpha)
\]
is a multiplicative Hom-Novikov-Poisson algebra, where $\mu$ is the given commutative associative product in $A$.  Now apply Corollary \ref{cor3:perturb} to $A_\alpha$ and the elements $a$ and $b$.  The result is a multiplicative Hom-Novikov-Poisson algebra $\widetilde{A_\alpha}$, whose operations are as stated above.
\end{proof}

Setting $\alpha = Id_A$ in Corollary \ref{cor5:perturb}, we recover Corollary 2.6 in \cite{xu2} (see also \cite{filippov}):

%%%%%%%%%%%%%%%%%%%%%
\begin{corollary}
\label{cor6:perturb}
Let $A$ be a commutative associative algebra, $\partial \colon A \to A$ be a derivation, and $a,b \in A$ be arbitrary elements.  Then $(A,\cdot,*)$ is a Novikov-Poisson algebra, where
\[
\begin{split}
x \cdot y &= bxy,\\
x * y &= x\partial(y) + axy
\end{split}
\]
for all $x,y \in A$.
\end{corollary}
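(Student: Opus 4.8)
The plan is to obtain this as the $\alpha = Id_A$ special case of Corollary \ref{cor5:perturb}, as the sentence preceding the statement indicates. First I would check that all the hypotheses of Corollary \ref{cor5:perturb} become automatic when $\alpha = Id_A$: any derivation $\partial$ commutes with the identity map, and the constraints $\alpha^2(a) = a$ and $\alpha^4(b) = b$ are vacuously true, so $a$ and $b$ may be taken to be arbitrary elements of $A$. Then I would specialize the conclusion: since $\alpha^k = Id_A$ for all $k$, the two products of Corollary \ref{cor5:perturb} collapse to
\[
x \cdot y = \alpha^2(b)\,\alpha^4(xy) = bxy, \qquad x * y = \alpha^4\bigl(x\partial(y)\bigr) + \alpha(a)\,\alpha^4(xy) = x\partial(y) + axy,
\]
and the twisting map $\alpha^4$ becomes $Id_A$. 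Because a multiplicative Hom-Novikov-Poisson algebra whose twisting map is the identity is, by definition, a Novikov-Poisson algebra, Corollary \ref{cor5:perturb} immediately gives that $(A,\cdot,*)$ is a Novikov-Poisson algebra, as claimed.

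As a cross-check one can argue directly inside the category of Novikov-Poisson algebras, without invoking the Hom-formalism. By \cite{xu1} (Lemma 2.1), $(A,\mu,\bullet)$ is a Novikov-Poisson algebra, where $xy$ denotes the given commutative associative product and $x \bullet y = x\partial(y)$. Applying Corollary \ref{cor2:perturb} with the element $a$ replaces $\bullet$ by $x * y = x\partial(y) + axy$ while keeping the commutative product unchanged, and then applying Corollary \ref{cor:perturb} with the element $b$ replaces the commutative product by $x \cdot y = bxy$ while keeping $*$ unchanged; the result is the Novikov-Poisson algebra $(A,\cdot,*)$ of the statement.

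There is essentially no obstacle: the substance is already contained in Corollary \ref{cor5:perturb} (equivalently, in Corollaries \ref{cor:perturb} and \ref{cor2:perturb}), and all that remains is the bookkeeping above. The one point that deserves a word of care is the order of the two perturbations in the direct argument — the Novikov product must be perturbed first and the commutative product second, since $x * y = x\partial(y) + axy$ is built from the \emph{original}, unperturbed commutative product.
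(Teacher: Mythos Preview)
Your proposal is correct and matches the paper's approach exactly: the paper simply records this corollary as the $\alpha = Id_A$ special case of Corollary~\ref{cor5:perturb}, with no further proof given. Your additional cross-check via Corollaries~\ref{cor2:perturb} and~\ref{cor:perturb} (in that order) is also valid and mirrors how the paper derived Corollary~\ref{cor5:perturb} itself.
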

%%%%%%%%%%%%%%%%%%%%%

%%%%%%%%%%%%%%%%%%%%%%%%%%%%%%%%%%%%%%%%%%%%%%%%%%%%%%%%%%
\section{From Hom-Novikov-Poisson algebras to Hom-Poisson algebras}
\label{sec:poisson}
%%%%%%%%%%%%%%%%%%%%%%%%%%%%%%%%%%%%%%%%%%%%%%%%%%%%%%%%%%

The purpose of this section is to show how Hom-Poisson algebras arise from Hom-Novikov-Poisson algebras.  A Poisson algebra is a commutative associative algebra with a Lie algebra structure that satisfies the Leibniz identity.  To define a Hom-Poisson algebra, let us first recall the relevant definitions.

%%%%%%%%%%%%%%%%%%%
\begin{definition}
\label{def:homlie}
A \textbf{Hom-Lie algebra} \cite{hls,ms,yau} is a Hom-algebra $(A,[,],\alpha)$ such that $[,]$ is anti-symmetric and that the \textbf{Hom-Jacobi identity}
\[
[[x,y],\alpha(z)] + [[z,x],\alpha(y)] + [[y,z],\alpha(x)] = 0
\]
holds for all $x,y,z \in A$.
\end{definition}
%%%%%%%%%%%%%%%%%%%

%%%%%%%%%%%%%%%%%%%
\begin{definition}
\label{def:hompoisson}
A \textbf{Hom-Poisson algebra} \cite{ms3} $(A,\cdot,[,],\alpha)$ consists of
\begin{enumerate}
\item
a commutative Hom-associative algebra $(A,\cdot,\alpha)$ and
\item
a Hom-Lie algebra $(A,[,],\alpha)$
\end{enumerate}
such that the \textbf{Hom-Leibniz identity}
\begin{equation}
\label{homleibniz}
[\alpha(x), y \cdot z] = [x,y]\cdot \alpha(z) + \alpha(y)\cdot[x,z]
\end{equation}
holds for all $x,y,z \in A$.
\end{definition}
%%%%%%%%%%%%%%%%%%%

Hom-Poisson algebras were defined \cite{ms3} as Hom-type generalizations of Poisson algebras.  In the context of deformations, Hom-Poisson algebras were shown in \cite{ms3} to be related to commutative Hom-associative algebras as Poisson algebras are related to commutative associative algebras.  Further properties of (non-commutative) Hom-Poisson algebras can be found in \cite{yau10}.

Both a Hom-Novikov-Poisson algebra and a Hom-Poisson algebra have an underlying commutative Hom-associative algebra.  So it makes sense to ask whether a Hom-Poisson algebra can be constructed from a Hom-Novikov-Poisson algebra by taking the commutator bracket of the Hom-Novikov product.  To answer this question, we make the following definitions.

%%%%%%%%%%%%%%%%%%%%
\begin{definition}
Let $(A,\cdot,*,\alpha)$ be a double Hom-algebra.  Its \textbf{left Hom-associator} $as_A^l \colon A^{\otimes 3} \to A$ is defined as
\[
as_A^l = *(\cdot \otimes \alpha - \alpha \otimes \cdot),
\]
or equivalently
\[
as_A^l(x,y,z) = (x \cdot y) * \alpha(z) - \alpha(x) * (y \cdot z)
\]
for $x,y,z \in A$.  The double Hom-algebra $A$ is called \textbf{left Hom-associative} if $as_A^l = 0$.
\end{definition}
%%%%%%%%%%%%%%%%%%%%

Using Lemma \ref{lem:rightmult}, the left Hom-associator $as_A^l$ in a Hom-Novikov-Poisson algebra $A$ is equivalent to
\begin{equation}
\label{lefthomass}
as_A^l(x,y,z) = \alpha(x) \cdot (y * z) - \alpha(x) * (y \cdot z)
\end{equation}
for all $x,y,z \in A$.

%%%%%%%%%%%%%%%%%%%%
\begin{definition}
\label{def:hpa}
Let $(A,\cdot,*,\alpha)$ be Hom-Novikov-Poisson algebra.  Then $A$ is called \textbf{admissible} if the double Hom-algebra
\begin{equation}
\label{aminus}
A^- = (A,\cdot,[,],\alpha)
\end{equation}
is a Hom-Poisson algebra, where
\[
[x,y] = x*y - y*x
\]
for all $x,y \in A$.
\end{definition}
%%%%%%%%%%%%%%%%%%%%

The following result gives a necessary and sufficient condition under which a Hom-Novikov-Poisson algebra is admissible.  It is the Hom-type generalization of an observation in \cite{zbm}.

%%%%%%%%%%%%%%%%%
\begin{theorem}
\label{thm:adm}
Let $(A,\cdot,*,\alpha)$ be a Hom-Novikov-Poisson algebra.  Then $A$ is admissible if and only if it is left Hom-associative.
\end{theorem}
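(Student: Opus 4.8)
The plan is to verify the Hom-Poisson algebra axioms for $A^- = (A,\cdot,[,],\alpha)$ directly, identifying which of them always hold and which one is equivalent to left Hom-associativity. We are given that $(A,\cdot,\alpha)$ is a commutative Hom-associative algebra, so that half of the structure needs no work. The bracket $[x,y] = x*y - y*x$ is visibly anti-symmetric, so it remains to examine two conditions: the Hom-Jacobi identity for $[,]$, and the Hom-Leibniz identity \eqref{homleibniz} relating $[,]$ and $\cdot$. The first step is to argue that the Hom-Jacobi identity holds automatically: since $(A,*,\alpha)$ is a Hom-Novikov algebra, its commutator bracket makes it a Hom-Lie algebra. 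This is the Hom-analogue of the classical fact that Novikov algebras are Lie-admissible, and it follows formally from left-symmetry of the Hom-associator \eqref{leftsymmetric} — expanding the Hom-Jacobiator of $[,]$ in terms of $*$, the terms cancel in pairs exactly because $as_*(x,y,z) = as_*(y,x,z)$. (Alternatively one cites that this is established in \cite{yau5} for Hom-Novikov algebras.) So the entire content of the theorem lies in the Hom-Leibniz identity.

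The second step is to expand the Hom-Leibniz identity \eqref{homleibniz} for $A^-$. Writing it out,
\[
[\alpha(x), y \cdot z] = (x \cdot y)*\alpha(z) + \alpha(y) \cdot (x \cdot z) + \cdots
\]
— more carefully, the left side is $\alpha(x)*(y\cdot z) - (y \cdot z)*\alpha(x)$, and the right side is $[x,y]\cdot\alpha(z) + \alpha(y)\cdot[x,z] = (x*y - y*x)\cdot\alpha(z) + \alpha(y)\cdot(x*z - z*x)$. The key simplification is to apply the compatibility conditions of the Hom-Novikov-Poisson structure. By Lemma \ref{lem:rightmult}, $(u*v)\cdot\alpha(w)$, $(u\cdot v)*\alpha(w)$, and $\alpha(u)\cdot(v*w)$ are interchangeable in the appropriate patterns; and commutativity of $\cdot$ together with Lemma \ref{lem:rightmult} lets us rewrite $(y\cdot z)*\alpha(x)$ and the terms $\alpha(y)\cdot(z*x)$, $(y*x)\cdot\alpha(z)$ in a common form. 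The expected outcome is that after all substitutions, the Hom-Leibniz identity for $A^-$ collapses to a single equation of the form $as_A^l(x,y,z) = $ (something manifestly zero), or more precisely to the assertion $as_A^l(x,y,z) = 0$ read through the equivalent form \eqref{lefthomass}; the ``if'' direction is then immediate, and the ``only if'' direction follows by running the computation backwards, since every step is an equivalence.

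The main obstacle I anticipate is the bookkeeping in the second step: there are six $*$-terms on the two sides of the Hom-Leibniz identity, and one must track exactly which compatibility relation (commutativity of $\cdot$, \eqref{rightmult2}, \eqref{rightmult''}, the mixed left-symmetry \eqref{mixedass}, or Hom-associativity of $\cdot$) converts each term into the canonical shape, taking care that the mixed Hom-associator $as_A$ — which by \eqref{mixedass} is symmetric in its first two slots — does not get confused with the left Hom-associator $as_A^l$, which is the one the theorem is about and which is \emph{not} assumed symmetric. Once the terms are aligned, I expect roughly half of them to cancel identically by commutativity and Lemma \ref{lem:rightmult}, leaving precisely $\pm(as_A^l(x,y,z) - as_A^l(x,y,z))$-type residue that vanishes iff $as_A^l \equiv 0$. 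I would organize the display as a single chain of equalities rather than \texttt{align} blocks with blank lines, to keep the cancellations visible and the source valid.
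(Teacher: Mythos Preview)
Your proposal is correct and follows essentially the same route as the paper: reduce to the Hom-Leibniz identity by noting that $(A,[,],\alpha)$ is automatically Hom-Lie via left-symmetry of $as_*$, then expand both sides of \eqref{homleibniz} and use Lemma~\ref{lem:rightmult} together with the expanded mixed-associator symmetry \eqref{mixed} to see that the discrepancy is exactly $as_A^l(y,x,z)$. The only cosmetic difference is that the paper cites \cite{ms} (Proposition~4.3) rather than \cite{yau5} for the Hom-Lie-admissibility step, and it organizes the computation as two labeled displays \eqref{hl1}--\eqref{hl2} rather than a single chain, but the logic and the tools are identical.
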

%%%%%%%%%%%%%%%%%

\begin{proof}
By definition $(A,\cdot,\alpha)$ is a commutative Hom-associative algebra.  Moreover, $(A,[,],\alpha)$ is a Hom-Lie algebra by Proposition 4.3 in \cite{ms}.  (Equivalently, one can expand the left-hand side of the Hom-Jacobi identity in terms of $*$ and observe that \eqref{leftsymmetric} implies that the resulting sum is $0$.)  Therefore, we must show that $A^-$ satisfies the Hom-Leibniz identity \eqref{homleibniz} if and only if $as_A^l = 0$.

Pick elements $x,y,z \in A$.  As usual we abbreviate $x \cdot y$ to $xy$.  The left-hand side of the Hom-Leibniz identity \eqref{homleibniz} for $A^-$ is:
\begin{equation}
\label{hl1}
\begin{split}
& [\alpha(x), yz]\\
&= \alpha(x) * (yz) - (yz) * \alpha(x)\\
&= (x*y)\alpha(z) - (y*x)\alpha(z) + \underbrace{\alpha(y)*(xz)}_{p} - \alpha(y)(z * x) \quad\text{(by \eqref{mixed} and Lemma \ref{lem:rightmult})}
\end{split}
\end{equation}
The right-hand side of the Hom-Leibniz identity \eqref{homleibniz} for $A^-$ is:
\begin{equation}
\label{hl2}
\begin{split}
& [x,y]\alpha(z) + \alpha(y)[x,z]\\
&= (x*y)\alpha(z) - (y*x)\alpha(z) + \alpha(y)(x*z) - \alpha(y)(z*x)\\
&= (x*y)\alpha(z) - (y*x)\alpha(z) + \underbrace{(yx)*\alpha(z)}_{q} - \alpha(y)(z*x)\quad\text{(by Lemma \ref{lem:rightmult})}\\
\end{split}
\end{equation}
It follows from \eqref{hl1} and \eqref{hl2} that $A^-$ satisfies the Hom-Leibniz identity if and only if
\[
\begin{split}
0 &= q-p\\
&= (yx)*\alpha(z) - \alpha(y)*(xz)\\
&= as_A^l(y,x,z).
\end{split}
\]
Since $x,y,z \in A$ are arbitrary, we conclude that $A^-$ is a Hom-Poisson algebra if and only if $A$ is left Hom-associative.
\end{proof}

%%%%%%%%%%%%%%%%%%%%%%
\begin{example}
\label{ex:derivation}
Consider the multiplicative Hom-Novikov-Poisson algebra $A_\alpha = (A,\cdot, \ast, \alpha)$ in Corollary \ref{cor3:twist}.  Here $A$ is a commutative associative algebra, $\partial \colon A \to A$ is a derivation, and $\alpha \colon A \to A$ is an algebra morphism such that $\alpha\partial = \partial\alpha$.  The operations $\cdot$ and $*$ are
\[
x \cdot y = \alpha(xy)\quad\text{and}\quad
x \ast y = \alpha(x\partial y)
\]
for $x,y \in A$.  Then $A_\alpha$ is admissible if and only if
\begin{equation}
\label{xydz}
\alpha^2(xy\partial z) = 0
\end{equation}
for all $x,y,z \in A$.  Indeed, $A_\alpha$ is left Hom-associative if and only if
\[
\begin{split}
0 &= \alpha(x) * (y \cdot z) - \alpha(x) \cdot (y * z)\\
&= \alpha\{\alpha(x)\partial(\alpha(yz))\} - \alpha\{\alpha(x)\alpha(y\partial z)\}\\
&= \alpha^2(x\partial(yz) - xy\partial z)\\
&= \alpha^2(xz\partial y).
\end{split}
\]
Therefore, $A_\alpha$ satisfies \eqref{xydz} if and only if it is left Hom-associative, which by Theorem \ref{thm:adm} is equivalent to $A_\alpha$ being admissible.
\qed
\end{example}
%%%%%%%%%%%%%%%%%%%%%%

In the rest of this section, we show that admissibility is compatible with the constructions in the previous sections.  We begin with the twisting constructions in section \ref{sec:hnp}.

%%%%%%%%%%%%%%%%%%%%
\begin{corollary}
\label{cor1:adm}
Let $(A,\cdot,*,\alpha)$ be an admissible Hom-Novikov-Poisson algebra and $\beta \colon A \to A$ be a weak morphism.  Then
\[
A_\beta = (A,\beta\cdot,\beta\ast,\beta\alpha)
\]
is also an admissible Hom-Novikov-Poisson algebra.  Moreover, if $A$ is multiplicative and $\beta$ is a morphism, then $A_\beta$ is also multiplicative.
\end{corollary}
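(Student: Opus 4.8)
The plan is to reduce everything to Theorem~\ref{thm:twist} and Theorem~\ref{thm:adm}. First, Theorem~\ref{thm:twist} already tells us that $A_\beta = (A,\beta\cdot,\beta\ast,\beta\alpha)$ is a Hom-Novikov-Poisson algebra, and that it is multiplicative whenever $A$ is multiplicative and $\beta$ is a morphism. So the only thing left to establish is that $A_\beta$ is admissible, and by Theorem~\ref{thm:adm} this is equivalent to showing that $A_\beta$ is left Hom-associative, i.e., that its left Hom-associator vanishes.

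The key step is therefore to compute the left Hom-associator $as_{A_\beta}^l$ of $A_\beta$ in terms of that of $A$. Writing $\cdot_\beta = \beta\cdot$ and $\ast_\beta = \beta\ast$ for the twisted products and $\beta\alpha$ for the twisted structure map, and using repeatedly that $\beta$ is a weak morphism (so that $\beta(u) \ast \beta(v) = \beta(u \ast v)$ and likewise for $\cdot$), one finds
\[
(x \cdot_\beta y) \ast_\beta (\beta\alpha)(z) = \beta^2\bigl((x \cdot y) \ast \alpha(z)\bigr), \qquad (\beta\alpha)(x) \ast_\beta (y \cdot_\beta z) = \beta^2\bigl(\alpha(x) \ast (y \cdot z)\bigr),
\]
and hence $as_{A_\beta}^l(x,y,z) = \beta^2\bigl(as_A^l(x,y,z)\bigr)$ for all $x,y,z \in A$. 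Since $A$ is admissible, Theorem~\ref{thm:adm} gives $as_A^l = 0$, so $as_{A_\beta}^l = 0$ as well; applying Theorem~\ref{thm:adm} once more shows that $A_\beta$ is admissible.

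There is essentially no obstacle here: the argument amounts to the observation that the left Hom-associator is natural with respect to weak morphisms, in the sense that twisting by $\beta$ multiplies it by $\beta^2$. The only point requiring a little care is keeping track of the two applications of $\beta$ — one coming from each of the two twisted products — which together produce the factor $\beta^2$; beyond that, the multiplicativity claim is already contained in Theorem~\ref{thm:twist}, so nothing further is needed.
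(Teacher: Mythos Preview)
Your proof is correct and follows essentially the same approach as the paper: invoke Theorem~\ref{thm:twist} for the Hom-Novikov-Poisson structure and multiplicativity, then observe that $as_{A_\beta}^l = \beta^2 \circ as_A^l$ and apply Theorem~\ref{thm:adm} in both directions. The paper's proof is slightly terser but the content is identical.
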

%%%%%%%%%%%%%%%%%%%%

\begin{proof}
By Theorem \ref{thm:twist} $A_\beta$ is a Hom-Novikov-Poisson algebra, and if $A$ is multiplicative and $\beta$ is a morphism, then $A_\beta$ is multiplicative.  The left Hom-associators in $A$ and $A_\beta$ are related as
\[
as_{A_\beta}^l = \beta^2 as_A^l.
\]
Since $A$ is left Hom-associative by Theorem \ref{thm:adm}, it follows that so is $A_\beta$.  Therefore, by Theorem \ref{thm:adm} again $A_\beta$ is admissible.
\end{proof}

In the context of Corollary \ref{cor1:adm}, the Hom-Lie bracket in the Hom-Poisson algebra $A_\beta^-$ is given by
\[
\beta(x * y) - \beta(y * x) = \beta[x,y],
\]
where $[,]$ is the Hom-Lie bracket in the Hom-Poisson algebra $A^-$.

The next result is a special case of Corollary \ref{cor1:adm}.

%%%%%%%%%%%%%%%%%%
\begin{corollary}
\label{cor2:adm}
Let $(A,\cdot,*,\alpha)$ be a multiplicative admissible Hom-Novikov-Poisson algebra.  Then so is
\[
A^n = (A, \alpha^n\cdot,  \alpha^n\ast, \alpha^{n+1})
\]
for each $n \geq 0$.
\end{corollary}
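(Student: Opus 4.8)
The plan is to recognize $A^n$ as the twisting of $A$ by an iterate of its own structure map and then invoke Corollary \ref{cor1:adm}. First I would check that $\alpha^n \colon A \to A$ is a morphism of the double Hom-algebra $A$. Since $A$ is multiplicative we have $\alpha \cdot = \cdot\, \alpha^{\otimes 2}$ and $\alpha * = *\, \alpha^{\otimes 2}$, so an easy induction on $n$ gives $\alpha^n \cdot = \cdot\, (\alpha^n)^{\otimes 2}$ and $\alpha^n * = *\, (\alpha^n)^{\otimes 2}$; thus $\alpha^n$ is a weak morphism, and since it obviously commutes with $\alpha$ it is a morphism.

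Next I would match notation with Corollary \ref{cor1:adm}. Taking $\beta = \alpha^n$ there, the twisted double Hom-algebra is
\[
A_\beta = (A, \alpha^n\cdot,\ \alpha^n *,\ \alpha^n\alpha) = (A, \alpha^n\cdot,\ \alpha^n *,\ \alpha^{n+1}),
\]
which is precisely $A^n$. Because $A$ is admissible and $\beta = \alpha^n$ is a morphism (hence a weak morphism), Corollary \ref{cor1:adm} applies and yields that $A^n = A_{\alpha^n}$ is an admissible Hom-Novikov-Poisson algebra; the multiplicativity clause of that corollary, again using that $\alpha^n$ is a morphism, gives that $A^n$ is multiplicative. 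One may also note explicitly that the left Hom-associator of $A^n$ is $(\alpha^n)^2 as_A^l = \alpha^{2n}\, as_A^l$, which vanishes since $as_A^l = 0$ by Theorem \ref{thm:adm}, reconfirming admissibility via Theorem \ref{thm:adm}.

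There is essentially no obstacle here: the statement is a direct specialization of Corollary \ref{cor1:adm}, and the only point requiring (routine) verification is the inductive argument that multiplicativity of $A$ makes each $\alpha^n$ a morphism. Everything else is bookkeeping of notation. Accordingly I would keep the proof to two or three sentences: $\alpha^n$ is a morphism by multiplicativity, $A^n = A_{\alpha^n}$, apply Corollary \ref{cor1:adm}.
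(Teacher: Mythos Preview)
Your proposal is correct and follows exactly the same approach as the paper: observe that multiplicativity of $A$ makes $\alpha^n$ a morphism, identify $A^n$ with $A_{\alpha^n}$, and apply Corollary \ref{cor1:adm}. The paper's proof is just the two-sentence version you describe at the end.
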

%%%%%%%%%%%%%%%%%%

\begin{proof}
The multiplicativity of $A$ implies that $\alpha^n$ is a morphism.  Now apply Corollary \ref{cor1:adm} with $\beta = \alpha^n$.
\end{proof}

Next we observe that admissibility is preserved by tensor products.

%%%%%%%%%%%%%%%%%%
\begin{corollary}
\label{cor3:adm}
Let $(A_i,\cdot_i,\ast_i,\alpha_i)$ be admissible Hom-Novikov-Poisson algebras for $i = 1,2$, and let $A = A_1 \otimes A_2$ be the Hom-Novikov-Poisson algebra in Theorem \ref{thm:tensor}.  Then $A$ is admissible.
\end{corollary}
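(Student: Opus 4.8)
The plan is to invoke Theorem \ref{thm:adm}, which reduces admissibility to left Hom-associativity, and then to check directly that the left Hom-associator of the tensor product vanishes on decomposable tensors, where it splits as a sum of the left Hom-associators of $A_1$ and $A_2$.

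First I would note that, by Theorem \ref{thm:tensor}, $A = A_1 \otimes A_2$ is already a Hom-Novikov-Poisson algebra, so Theorem \ref{thm:adm} applies: it suffices to show that $as_A^l = 0$, i.e. $(x \cdot y) \ast \alpha(z) = \alpha(x) \ast (y \cdot z)$ for all $x,y,z \in A$. Since $as_A^l$ is trilinear, it is enough to verify this when $x = \xtwo$, $y = \ytwo$, $z = \ztwo$ with $x_i, y_i, z_i \in A_i$.

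Next I would expand both terms using $\alpha = \alpha_1 \otimes \alpha_2$ and the definitions of $\cdot$ and $\ast$ on $A_1 \otimes A_2$. Writing $\cdot_i$ as juxtaposition, one obtains
\[
(x \cdot y) \ast \alpha(z) = (x_1 y_1) \ast_1 \alpha_1(z_1) \otimes (x_2 y_2)\alpha_2(z_2) + (x_1 y_1)\alpha_1(z_1) \otimes (x_2 y_2) \ast_2 \alpha_2(z_2),
\]
\[
\alpha(x) \ast (y \cdot z) = \alpha_1(x_1) \ast_1 (y_1 z_1) \otimes \alpha_2(x_2)(y_2 z_2) + \alpha_1(x_1)(y_1 z_1) \otimes \alpha_2(x_2) \ast_2 (y_2 z_2).
\]
By the Hom-associativity of the commutative products $\cdot_1$ and $\cdot_2$, the passive tensor factors match, $(x_2 y_2)\alpha_2(z_2) = \alpha_2(x_2)(y_2 z_2)$ and $(x_1 y_1)\alpha_1(z_1) = \alpha_1(x_1)(y_1 z_1)$, so subtracting yields
\[
as_A^l(x,y,z) = as_{A_1}^l(x_1,y_1,z_1) \otimes (x_2 y_2)\alpha_2(z_2) + (x_1 y_1)\alpha_1(z_1) \otimes as_{A_2}^l(x_2,y_2,z_2).
\]
Since $A_1$ and $A_2$ are admissible, Theorem \ref{thm:adm} gives $as_{A_1}^l = 0 = as_{A_2}^l$, hence $as_A^l$ vanishes on all decomposable tensors and therefore on $A$. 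A final appeal to Theorem \ref{thm:adm} then shows $A$ is admissible.

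The only care needed is the bookkeeping in the expansion: one must check that the two cross terms in the two-summand formula for $\ast$ on the tensor product pair off correctly, so that exactly the two ``diagonal'' differences survive as $as_{A_i}^l$ while the remaining tensor factors cancel via Hom-associativity of $\cdot_i$. There is no genuine obstacle beyond this routine matching, and (as in Corollary \ref{cor1:adm}) the induced Hom-Lie bracket on $A^-$ is simply the componentwise commutator bracket of $\ast$.
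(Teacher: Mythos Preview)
Your proposal is correct and follows essentially the same approach as the paper: reduce to left Hom-associativity via Theorem \ref{thm:adm}, check on decomposable tensors, and use Hom-associativity of each $\cdot_i$ together with $as_{A_i}^l = 0$ to see that $as_A^l$ vanishes. The only cosmetic difference is that the paper works with the equivalent form \eqref{lefthomass} of the left Hom-associator (computing $\alpha(x)\cdot(y*z)$ and transforming it into $\alpha(x)*(y\cdot z)$), whereas you use the defining form $(x\cdot y)*\alpha(z) - \alpha(x)*(y\cdot z)$ and subtract; the underlying computation is the same.
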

%%%%%%%%%%%%%%%%%%

\begin{proof}
By Theorem \ref{thm:adm} we need to show that $A$ is left Hom-associative.   Pick $x = \xtwo$, $y = \ytwo$, and $z = \ztwo$ in $A$.  Then we have:
\[
\begin{split}
&\alpha(x) \cdot (y * z)\\
&= \alpha(x_1) \cdot (y_1*z_1) \otimes \alpha(x_2) \cdot (y_2 \cdot z_2)
+ \alpha(x_1)\cdot(y_1\cdot z_1) \otimes \alpha(x_2)\cdot(y_2*z_2)\\
&= \alpha(x_1) * (y_1\cdot z_1) \otimes \alpha(x_2)\cdot (y_2\cdot z_2)
+ \alpha(x_1)\cdot (y_1\cdot z_1) \otimes \alpha(x_2) * (y_2\cdot z_2) \quad\text{(by $as^l_{A_i} = 0$)}\\
&= \alpha(x) * (y\cdot z).
\end{split}
\]
Therefore, $A$ is left Hom-associative by \eqref{lefthomass}.
\end{proof}

In the context of Corollary \ref{cor3:adm}, the Hom-Lie bracket in the Hom-Poisson algebra $A^-$ is given by:
\[
\begin{split}
[\xtwo,\ytwo] &= (\xtwo) * (\ytwo) - (\ytwo) * (\xtwo)\\
&= x_1*y_1 \otimes x_2y_2 + x_1y_1 \otimes x_2*y_2\\
&\relphantom{} - y_1*x_1 \otimes y_2x_2 - y_1x_1 \otimes y_2*x_2\\
&= [x_1,y_1] \otimes x_2y_2 + x_1y_1 \otimes [x_2,y_2].
\end{split}
\]
The last equality holds because $\cdot_i$ is commutative, and $[x_i,y_i]$ is the Hom-Lie bracket in the Hom-Poisson algebra $A_i^-$.

Next we observe that admissibility is preserved by the perturbations in Theorem \ref{thm1:perturb}.

%%%%%%%%%%%%%%%%%%
\begin{corollary}
\label{cor4:adm}
Let $(A,\cdot,*,\alpha)$ be a multiplicative admissible Hom-Novikov-Poisson algebra and $a \in A$ be an element such that $\alpha^2(a) = a$.  Then the multiplicative Hom-Novikov-Poisson algebra
\[
A' = (A,\diamond,*_\alpha,\alpha^2)
\]
in Theorem \ref{thm1:perturb} is also admissible.
\end{corollary}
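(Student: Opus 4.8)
The plan is to apply Theorem \ref{thm:adm} twice.  Since $A'$ is a multiplicative Hom-Novikov-Poisson algebra by Theorem \ref{thm1:perturb}, it is admissible if and only if it is left Hom-associative, and since $A$ is admissible its left Hom-associator $as_A^l$ vanishes by Theorem \ref{thm:adm}.  So it suffices to show that the left Hom-associator of $A'$ vanishes, and I would do this by expressing $as_{A'}^l$ in terms of $as_A^l$.

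First I would unwind the definition of $\ast_\alpha$ (using $u \ast_\alpha v = \alpha(u \ast v)$) to write
\[
as_{A'}^l(x,y,z) = \alpha\bigl((x \diamond y)\ast\alpha^2(z) - \alpha^2(x)\ast(y \diamond z)\bigr),
\]
where $x \diamond y = a\cdot(x\cdot y)$.  Two applications of the compatibility identity \eqref{rightmult''} (from Lemma \ref{lem:rightmult}) reduce the first inner term to $(x\diamond y)\ast\alpha^2(z) = \alpha(a)\cdot\bigl(\alpha(x)\cdot(y\ast z)\bigr)$.  For the second inner term, using $y \diamond z = a\cdot(y\cdot z)$ and the definition of the left Hom-associator, I would write $\alpha^2(x)\ast(a\cdot(y\cdot z)) = (\alpha(x)\cdot a)\ast\alpha(y\cdot z) - as_A^l(\alpha(x),a,y\cdot z)$, and then use the commutativity of $\cdot$ together with \eqref{rightmult''} once more to rewrite $(\alpha(x)\cdot a)\ast\alpha(y\cdot z) = \alpha(a)\cdot\bigl(\alpha(x)\ast(y\cdot z)\bigr)$.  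Subtracting these, and using the bilinearity of $\cdot$ together with the reformulation \eqref{lefthomass} of $as_A^l$, I expect to obtain the clean identity
\[
as_{A'}^l(x,y,z) = \alpha\Bigl(\alpha(a)\cdot as_A^l(x,y,z) + as_A^l\bigl(\alpha(x),a,y\cdot z\bigr)\Bigr).
\]
Since $as_A^l = 0$, the right-hand side vanishes, so $A'$ is left Hom-associative and hence admissible by Theorem \ref{thm:adm}.

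The main obstacle is purely organizational: keeping track of which direction and which substitution of the various powers of $\alpha$ to apply at each use of \eqref{rightmult''} and of the Hom-associativity of $\cdot$, and recognizing that the difference of the two reduced terms collapses into $\alpha(a)\cdot as_A^l(x,y,z)$ plus a single extra $as_A^l$-term rather than into a messy expansion.  An alternative that avoids the closed formula is to check directly that $(x\diamond y)\ast_\alpha\alpha^2(z) = \alpha^2(x)\ast_\alpha(y\diamond z)$, invoking $as_A^l = 0$ precisely at the two steps where a $\cdot$-product must be traded for a $\ast$-product.
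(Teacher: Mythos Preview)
Your argument is correct: the overall strategy coincides with the paper's (use Theorem \ref{thm:adm} to reduce to showing $as_{A'}^l = 0$, then express $as_{A'}^l$ in terms of $as_A^l$), and each step you outline checks out.  The only difference is computational.  The paper starts from the reformulated left Hom-associator \eqref{lefthomass} and simplifies the two summands $\alpha^2(x)\diamond(y*_\alpha z)$ and $\alpha^2(x)*_\alpha(y\diamond z)$, arriving at the single-term identity
\[
as_{A'}^l(x,y,z) = as_A^l\bigl(\alpha^2(x),\,a\cdot\alpha(y),\,\alpha^2(z)\bigr).
\]
Your route instead works from the original definition and yields the two-term identity
\[
as_{A'}^l(x,y,z) = \alpha\Bigl(\alpha(a)\cdot as_A^l(x,y,z) + as_A^l\bigl(\alpha(x),a,y\cdot z\bigr)\Bigr).
\]
Both collapse to zero once $as_A^l=0$, so either is sufficient.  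The paper's version is marginally tidier (one $as_A^l$ instead of two) and avoids the intermediate subtraction-and-recombination, but your formula is just as valid and makes slightly more transparent where admissibility of $A$ is invoked.
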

%%%%%%%%%%%%%%%%%%

\begin{proof}
By Theorem \ref{thm:adm} we need to show that $A'$ is left Hom-associative.  Pick $x,y,z \in A$.  Recall that
\[
x \diamond y = a \cdot (x \cdot y) \andspace
x *_\alpha y = \alpha(x * y) = \alpha(x) * \alpha(y)
\]
for $x,y \in A$.  The first summand in the left Hom-associator \eqref{lefthomass} in $A'$ is:
\[
\begin{split}
\alpha^2(x) \diamond (y *_\alpha z)
&= a\{\alpha^2(x)(\alpha(y) * \alpha(z))\}\\
&= \alpha^2(a)\{\alpha^2(x)(\alpha(y) * \alpha(z))\}\\
&= \alpha^3(x)\{\alpha(a)(\alpha(y) * \alpha(z))\} \quad\text{(by Lemma \ref{lem:comm})}\\
&= \alpha^3(x)\{(a\alpha(y)) * \alpha^2(z)\} \quad\text{(by Lemma \ref{lem:rightmult})}.
\end{split}
\]
The second summand in the left Hom-associator \eqref{lefthomass} in $A'$ is:
\[
\begin{split}
\alpha^2(x) *_\alpha (y \diamond z)
&= \alpha^3(x) * \{\alpha(a)(\alpha(y)\alpha(z))\}\\
&= \alpha^3(x) * \{(a\alpha(y))\alpha^2(z)\}\byhomass.
\end{split}
\]
Therefore, the left Hom-associators \eqref{lefthomass} in $A'$ and $A$ are related as:
\[
\begin{split}
as_{A'}^l(x,y,z)
&=  \alpha^3(x)\{(a\alpha(y)) * \alpha^2(z)\} - \alpha^3(x) * \{(a\alpha(y))\alpha^2(z)\}\\
&= as_A^l(\alpha^2(x), a\alpha(y), \alpha^2(z)).
\end{split}
\]
Since $A$ is left Hom-associative by Theorem \ref{thm:adm}, it follows that so is $A'$.
\end{proof}

In the context of Corollary \ref{cor4:adm}, the Hom-Lie bracket in the Hom-Poisson algebra $(A')^-$ is given by
\[
x*_\alpha y - y*_\alpha x = \alpha(x*y - y*x) = \alpha[x,y],
\]
where $[,]$ is the Hom-Lie bracket in the Hom-Poisson algebra $A^-$.

%%==============%%
%%              %%
%%  References  %%
%%              %%
%%==============%%

\end{document}